\documentclass[11pt]{amsart}
\usepackage[margin=1.35in]{geometry}
\usepackage{amscd,amsmath,amsxtra,amssymb,stmaryrd,xr,mathrsfs,mathtools,enumerate,commath, comment,appendix}
\usepackage{xcolor}
\usepackage{comment}
\usepackage{tikz-cd}
\usepackage{float}
\usepackage{longtable} % for 'longtable' environment
\usepackage{pdflscape} % for 'landscape' environment
\usepackage{amsthm}
\usepackage{thmtools}

\usepackage[normalem]{ulem}

\usetikzlibrary{shapes.geometric}
\usetikzlibrary{decorations.markings}
\tikzset{every loop/.style={min distance=10mm,looseness=10}}

\usepackage{multirow}

\newcommand{\DV}[1]{\textcolor{blue}{#1}}
\newcommand{\RG}[1]{\textcolor{red}{#1}}
\newcommand{\KM}[1]{\textcolor{purple}{#1}}
\definecolor{Green}{rgb}{0.0, 0.5, 0.0}
\newcommand{\green}[1]{\textcolor{Green}{#1}}

\newcommand{\cBF}{\mathcal{BF}}

\newcommand{\ZZ}{\mathbb{Z}}

\newcommand{\CC}{\mathbb{C}}
\usepackage[utf8]{inputenc}
\DeclareFontFamily{U}{wncy}{}
\DeclareFontShape{U}{wncy}{m}{n}{<->wncyr10}{}
\DeclareSymbolFont{mcy}{U}{wncy}{m}{n}
\DeclareMathSymbol{\Sh}{\mathord}{mcy}{"58}
\usepackage[T2A,T1]{fontenc}
\usepackage[OT2,T1]{fontenc}

\newtheorem{theorem}{Theorem}[section]
\newtheorem{lemma}[theorem]{Lemma}

\newtheorem{proposition}[theorem]{Proposition}
\newtheorem{corollary}[theorem]{Corollary}
\newtheorem{definition}[theorem]{Definition}

\numberwithin{equation}{section}

\theoremstyle{remark}
\newtheorem{remark}[theorem]{Remark}

\newcommand{\BF}{\mathrm{BF}}
\newcommand{\coker}{\textup{coker}}
\newcommand{\Z}{\mathbb{Z}}
\renewcommand{\Im}{\mathrm{Im}}

\usepackage[hypertexnames=false]{hyperref}
\usepackage[capitalize,nameinlink,noabbrev]{cleveref}

\theoremstyle{plain}
\newtheorem{lthm}{Theorem} % theorems with letters (for intro)

\definecolor{vegasgold}{rgb}{0.77, 0.7, 0.35}
\definecolor{darkgoldenrod}{rgb}{0.72, 0.53, 0.04}
\definecolor{gold(metallic)}{rgb}{0.83, 0.69, 0.22}
\hypersetup{
 colorlinks=true,
 linkcolor=darkgoldenrod,
 filecolor=brown,      
 urlcolor=gold(metallic),
 citecolor=darkgoldenrod,
 pdftitle={Bowwen--Franks groups and minus class groups},
 }

\AddToHook{cmd/appendix/before}{\crefalias{section}{appendix}}

\begin{document}
\title[Bowen--Franks groups and class groups]{Bowen--Franks groups and minus class groups of cyclotomic number fields with prime conductor}

\author[A. Lei]{Antonio Lei}
\address[Lei]{Department of Mathematics and Statistics\\University of Ottawa\\
150 Louis-Pasteur Pvt\\
Ottawa, ON\\
Canada K1N 6N5}
\email{antonio.lei@uottawa.ca}

\author[K. Müller]{Katharina Müller}
\address[Müller]{Institut für Theoretische Informatik, Mathematik und Operations Research, Universität der Bundeswehr München, Werner-Heisenberg-Weg 39, 85577 Neubiberg, Germany}
\email{katharina.mueller@unibw.de}

\author[D.~Valli\`{e}res]{Daniel Valli\`{e}res}
\address[Valli\`{e}res]{Mathematics and Statistics Department, California State University, Chico, CA 95929, USA}
\email{dvallieres@csuchico.edu}

\begin{abstract}
Let $p$ be an odd rational prime and consider the cyclotomic number field $K = \mathbb{Q}(\zeta_{p})$ of conductor $p$.  We construct a directed graph $Y$ on $p-1$ vertices for which the torsion part of the corresponding Bowen--Franks group is closely related to the minus part of the class group of $K$. In particular, both groups have the same cardinality up to an explicit power of $p$. Furthermore, they are both $\mathrm{Gal}(K/\mathbb{Q})$-modules, and we prove the equality of the cardinalities of their isotypic components after tensoring them with the valuation ring of an appropriate $\ell$-adic field for $\ell \nmid p-1$.
\end{abstract}

\subjclass[2020]{Primary: 05C20, 11R29; Secondary: 37B10} 
\date{\today} 
\keywords{Ideal class groups of cyclotomic fields, covering of digraphs, symbolic dynamics}

\maketitle
 
\tableofcontents

\section{Introduction}
Let $p$ be a rational prime, and consider the cyclotomic number field $K = \mathbb{Q}(\zeta_{p})$, where $\zeta_{p}$ is a primitive $p$-th root of unity in an algebraic closure $\overline{\mathbb{Q}}$ of $\mathbb{Q}$.  We let $\Delta = {\rm Gal}(K/\mathbb{Q}) \simeq \mathbb{F}_{p}^{\times}$ and set $S =\{\infty,p \}$.  A central object in the study of the ideal class group of $K$ is the special value at $s=0$ of the equivariant $L$-function $\theta_{K/\mathbb{Q},S}(s)$ associated with the Galois extension $K/\mathbb{Q}$.  By classical results, it is given by 
$$\theta_{K/\mathbb{Q},S}(0) = \sum_{i=1}^{p-1} \left(\frac{1}{2} - \frac{i}{p} \right) \sigma_{i}^{-1} \in \mathbb{Q}[\Delta],$$
where $\sigma_{i} \in \Delta$ is the usual Galois automorphism defined via $\zeta_{p} \mapsto \sigma_{i}(\zeta_{p}) = \zeta_{p}^{i}$.  Given a unital commutative ring $R$ and a finite abelian group $G$, we let $\widehat{G}(R) = {\rm Hom}_{\mathbb{Z}}(G,R^{\times})$.  If $\psi \in \widehat{\Delta}(\mathbb{C})$ is a complex valued character of $\Delta$, then it induces a $\mathbb{Q}$-algebra morphism $\mathbb{Q}[\Delta] \rightarrow \mathbb{C}$, which we denote by the same symbol $\psi$.  For all $\psi \in \widehat{\Delta}(\mathbb{C})$ different from the trivial character $\psi_{0}$, one has
\begin{equation} \label{dirichlet}
\overline{\psi}(\theta_{K/\mathbb{Q},S}(0)) = L(0,\psi),
\end{equation}
where $L(s,\psi)$ is the corresponding primitive Dirichlet $L$-function, and the bar denotes complex conjugation.  From now on, consider the element
\begin{equation} \label{stickelberger}
\theta = - \frac{1}{p}\sum_{i=1}^{p-1} i \sigma_{i}^{-1} \in \mathbb{Q}[\Delta]. 
\end{equation}
Then, for all non-trivial character $\psi \in \widehat{\Delta}(\mathbb{C})$, one has
\begin{equation} \label{stick_vs_sv}
\psi(\theta) = \psi(\theta_{K/\mathbb{Q},S}(0)), 
\end{equation}
from which it follows that, away from the trivial character, $\theta$ contains the same information as $\theta_{K/\mathbb{Q},S}(0)$.  The element $-\theta$ is often called the Stickelberger element in the literature.  Moreover, if $F$ is any field of characteristic zero containing the $(p-1)$-th roots of unity, and $\psi \in \widehat{\Delta}(F)$ is a non-trivial character, then
\begin{equation} \label{bernoulli_def}
B_{1,\psi} = -\psi^{-1}(\theta) = \frac{1}{p} \sum_{i=1}^{p-1}i\psi(\sigma_{i}) \in F
\end{equation}
is the usual generalized Bernoulli $F$-number corresponding to the character $\psi \in \widehat{\Delta}(F)$.  Putting (\ref{dirichlet}), (\ref{stick_vs_sv}) and (\ref{bernoulli_def}) together gives
$$L(0,\psi) = -B_{1,\psi},$$
for any non-trivial character $\psi \in \widehat{\Delta}(\mathbb{C})$.
  
Now, we have
\begin{equation} \label{integralized_stick}
p \theta = - \sum_{i=1}^{p-1}i \sigma_{i}^{-1} \in \mathbb{Z}[\Delta],
\end{equation}
and another way to construct elements in the group ring $\mathbb{Z}[\Delta]$ is via the special value at $u=1$ of equivariant Ihara zeta functions in graph theory.  The Ihara zeta function of a finite graph was introduced by Ihara in \cite{Ihara:1966} in his study of cocompact discrete torsion-free subgroups $\Gamma$ of ${\rm PGL}(2,\mathbb{Q}_{p})$ acting on a Bruhat-Tits tree $Y$.  Serre suggested in \cite{Serre:1977} that this zeta function could be reformulated in terms of the finite graph $X = \Gamma \backslash Y$ only, and this was explicitly done by Sunada in \cite{Sunada:1986}.  Many authors incorporated representations of the fundamental group $\pi_{1}(X,v_{0})$ into the picture in order to study Ihara zeta functions twisted by representations.  The resulting functions are analogous to Artin $L$-functions in algebraic number theory. See, for instance, the work of Hashimoto in \cite{Hashimoto:1989, Hashimoto:1990, Hashimoto:1992} and also the works of Stark and Terras in \cite{Stark/Terras:1996, Stark/Terras:2000, Stark/Terras:2007} and \cite{Terras:2011}. Let us assume for the moment that there exists a Galois cover $Y/X$ of undirected graphs with Galois group $\Delta$ for which 
$$\eta_{Y/X}(1) = p \theta, $$
where $\eta_{Y/X}(u) \in \mathbb{Z}[\Delta][u]$ is the polynomial defined in \cite[(3.4)]{Vallieres/Wilson:2025}.  The polynomial $\eta_{Y/X}(u)$ is closely related to the equivariant Ihara zeta function of the cover $Y/X$; the precise relationship is given by \cite[(3.5)]{Vallieres/Wilson:2025}.  Then, combining \cite[Corollary 4.3]{Vallieres/Wilson:2025} with Mazur and Wiles's refinement of the Herbrand--Ribet theorem (\cite[Theorem 2, page 216]{Mazur/Wiles:1984}, see also \cite[page 301]{Washington:1997}) would give the equality
$$\#e_{\omega^{i}} {\rm Pic}^{0}_{p}(Y) = p|B_{1,\omega^{-i}}|_{p}^{-1} = p \cdot \# e_{\omega^{i}}{\rm Cl}^{-}_{p}(K), $$
for all odd integers $i$ satisfying $3 \le i  \le p-1$, where $\omega:\Delta \simeq \mathbb{F}_{p}^{\times} \hookrightarrow \mathbb{Z}_{p}^{\times}$ is the Teichm\"{u}ller character, $e_{\omega^{i}} \in \mathbb{Z}_{p}[\Delta]$ is the idempotent corresponding to the character $\omega^{i}$, the numbers $B_{1,\omega^{-i}}$ are the usual generalized Bernoulli numbers defined above in (\ref{bernoulli_def}), ${\rm Pic}^{0}_{p}(Y)$ is the Sylow $p$-subgroup of the Picard group of degree zero of $Y$, and ${\rm Cl}^{-}_{p}(K)$ is the Sylow $p$-subgroup of the minus class group of the cyclotomic number field $K$.  Currently, we do not know if there is a way to realize $p \theta$ as the special value at $u=1$ of an equivariant Ihara zeta function in the category of undirected graphs, and this seems to be due to the extra structure imposed by the inversion map appearing in Serre's formalism. We note that many questions about the cardinality of ${\rm Cl}^{-}(K)$ remain open.  For example, the asymptotic behavior of these numbers, encapsulated in Kummer's conjecture, remains open; see, for instance, \cite{granville,murtypetridis}. 

In this article, we will instead work with directed graphs to establish such a relation.  To make clear the distinction between an undirected graph and a directed graph, we will simply call the former a graph, and the latter a digraph.  Our point of view is that a graph is a digraph with an extra structure given by the inversion map in Serre's formalism. In this article, our emphasis will be on digraphs only.  The Picard group of an undirected graph is replaced with the Bowen--Franks group of a digraph arising in symbolic dynamics, and the Ihara zeta function is replaced with another zeta function which is a special instance of an Artin--Mazur zeta function; it is the Artin--Mazur zeta function associated with a shift of finite type, namely the edge shift corresponding to the digraph.  The Bowen--Franks group was introduced in \cite{Bowen/Franks:1977}, and the Artin--Mazur zeta function was introduced in \cite{Artin/Mazur:1965}, originally in the context of differential geometry.  Bowen and Lanford showed in \cite{Bowen/Lanford:1970} that the zeta function associated with a strongly connected finite digraph is a rational function of a particularly simple shape, since it is the inverse of the reverse characteristic polynomial of the adjacency matrix of the digraph (see also \cite{Kotani/Sunada:2000}).  For that reason, the zeta function associated with a digraph is sometimes called a Bowen--Lanford zeta function in the literature.  Now, in the category of digraphs, it is possible to construct a Galois cover $Y/X$ for which the special value at $u=1$ of the equivariant zeta function is precisely equal to $p\theta$ as defined above in (\ref{integralized_stick}) (see \cref{a_special_cover}).  It seems plausible to expect that the special value at $u=1$ of the zeta function of a digraph $Y$ contains information about the Bowen--Franks group of $Y$, and also that an analogue of the Herbrand-Ribet theorem, as well as its refinement by Mazur and Wiles, hold true for digraphs as well.  The same line of reasoning from the previous paragraph suggests then a potential connection between some Bowen--Franks groups in symbolic dynamics and some class groups in algebraic number theory.  The goal of this article is to explore such a potential connection for the cyclotomic number fields $K$ of prime conductor.  The first main result of this article is the following.
\begin{lthm}[\cref{goal}] \label{main1}
Let $p$ be an odd rational prime.  There exists a strongly connected finite digraph $Y$ with $p-1$ vertices for which
$$\#{\rm BF}(Y)_{\rm tors} = p^{\frac{p-1}{2}} \cdot \#{\rm Cl}^{-}(K),$$
where ${\rm BF}(Y)$ denotes the Bowen--Franks group of $Y$ and ${\rm Cl}^{-}(K)$ the minus class group of the cyclotomic number field $K = \mathbb{Q}(\zeta_{p})$.
\end{lthm}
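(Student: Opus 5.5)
Since ${\rm BF}(Y) = \mathbb{Z}^{p-1}/(I-A_Y)\mathbb{Z}^{p-1}$, the plan is to build $Y$ so that $I-A_Y$ is the matrix of multiplication by the Stickelberger-type element $p\theta$ on $\mathbb{Z}[\Delta]$, and then to compute the torsion order of the cokernel character by character via Bernoulli numbers.

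\textbf{Construction.} Take $X$ to be the digraph with one vertex and some loops, and let $Y$ be the Cayley-type digraph on the vertex set $\Delta$. Its adjacency matrix is multiplication by an element $\alpha = \sum_{g} a_g g \in \mathbb{Z}_{\ge 0}[\Delta]$. I would choose $\alpha$ with $1-\alpha = p\theta + c\cdot N$, where $N=\sum_{g\in\Delta} g$ and $c$ is an integer shift. Since $p\theta = -\sum i\sigma_i^{-1}$, the coefficient of $\sigma_i^{-1}$ in $\alpha$ is $i - c + \delta_{i,1}$. Choosing $c$ with $0\le c\le 1$, or otherwise arranging nonnegativity, makes all coefficients nonnegative and most of them positive. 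Then $Y$ is strongly connected, because the positive coefficients generate $\Delta$. This is presumably the cover of \cref{a_special_cover}; the $N$-term only affects the trivial character.

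\textbf{Torsion.} Over $\mathbb{Q}$, $\mathbb{Q}[\Delta]$ decomposes along characters, and $\psi(1-\alpha) = -p\,B_{1,\psi^{-1}}$ for $\psi\ne\psi_0$. For even nontrivial $\psi$ this vanishes, and for odd $\psi$ it is nonzero. The trivial-character value is $-\sum i + c(p-1)$, which I would arrange to be zero as well, or else handle directly. So the rank of ${\rm BF}(Y)$ is the number of characters where the value vanishes. The torsion order is then the product of the nonzero "pseudo-determinant" factors, up to an index correction coming from the lattice $\mathbb{Z}[\Delta]$ versus $\bigoplus_\psi$.

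Concretely, I would pass to the quotient by the kernel. Write $\mathbb{Z}[\Delta]^- = \mathbb{Z}[\Delta]/(1+\sigma_{-1})$, which is a free $\mathbb{Z}$-module of rank $(p-1)/2$. Because $p\theta$ kills the plus part up to the trivial piece, the torsion of the cokernel should equal $\#\,\mathbb{Z}[\Delta]^-/p\theta\,\mathbb{Z}[\Delta]^-$. I would verify this by checking that the image of $1-\alpha$ is saturated inside the minus-part summand, perhaps using that $(1-\sigma_{-1})\theta$ is integral up to the element $N$. The determinant of multiplication by $p\theta$ on $\mathbb{Q}[\Delta]^-$ is
\[
\prod_{\psi \text{ odd}} \bigl(-pB_{1,\psi^{-1}}\bigr) = p^{\frac{p-1}{2}}\prod_{\psi\text{ odd}} (-B_{1,\psi}).
\]
The analytic class number formula gives $h^-(K) = 2p\prod_{\psi\text{ odd}}\bigl(-\tfrac12 B_{1,\psi}\bigr) = \frac{p}{2^{(p-3)/2}}\prod(-B_{1,\psi})$ (Washington Thm 4.17). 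This is where the powers of $2$ and the extra $p$ must be reconciled. The naive determinant is off from $p^{(p-1)/2}h^-$ by $2^{(p-3)/2}/p$. These discrepancies should be exactly absorbed by the index of $\mathbb{Z}[\Delta]^-$ inside $\prod_{\psi \text{ odd}}$, and by the precise torsion/free splitting near $\psi=\omega^{-1}$, where $pB_{1,\omega^{-1}}$ is a $p$-unit times $1$.

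\textbf{Main obstacle.} The hardest step is this last lattice-index bookkeeping. I need to show that $\#{\rm BF}(Y)_{\rm tors}$ equals $\bigl|\det(p\theta\mid \mathbb{Z}[\Delta]^-)\bigr|$ corrected by exactly the factor producing $p^{(p-1)/2}h^-$. To do this I would use Iwasawa's computation of the index of the Stickelberger ideal, $[\mathbb{Z}[\Delta]^-:\mathcal{S}^-]=h^-$, and compare the ideal $p\theta\,\mathbb{Z}[\Delta]$ with the Stickelberger ideal $\mathcal{S}=\mathbb{Z}[\Delta]\theta\cap\mathbb{Z}[\Delta]$ in its minus part. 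The comparison factor $p^{(p-1)/2}$ should come from $[\mathcal{S}^-:p\theta\,\mathbb{Z}[\Delta]^-]$, computed via $(c-\sigma_c)\theta$ generating $\mathcal{S}$ together with $p\theta$.
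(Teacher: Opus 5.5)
With $c=0$ your digraph is exactly the paper's cover. However, $c$ is not a free parameter, and the remark that the $N$-term ``only affects the trivial character'' hides the problem: the trivial character is exactly where one factor of $p$ in $p^{\frac{p-1}{2}}$ comes from. Its value $\frac{p-1}{2}(2c-p)$ never vanishes, so it cannot be ``arranged to be zero''. The image of $1-\alpha$ in $\mathbb{Z}[\Delta^{+}]$ is $(2c-p)N_{\Delta^{+}}$, and the torsion order comes out as $p^{\frac{p-3}{2}}\,|2c-p|\,h^{-}(K)$. For instance, $p=3$ and $c=1$ give $\alpha=1+j$, $I-A_{Y}=\begin{pmatrix}0&-1\\-1&0\end{pmatrix}$ and ${\rm BF}(Y)=0$, whereas the statement needs order $3$. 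You must take $c=0$.

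The genuine gap is the torsion computation, which rests on a false identification. The group $\mathbb{Z}[\Delta]^{-}/p\theta\,\mathbb{Z}[\Delta]^{-}$ has order exactly $|\det(p\theta\mid\mathbb{Z}[\Delta]^{-})|=(2p)^{\frac{p-3}{2}}h^{-}(K)$. The cokernel of an injective endomorphism of a lattice has order equal to the absolute value of its determinant, so no index of $\mathbb{Z}[\Delta]^{-}$ in $\bigoplus_{\psi}$ can enter. By contrast, $\#{\rm BF}(Y)_{\rm tors}=p^{\frac{p-1}{2}}h^{-}(K)$; for $p=5$ these are $10$ and $25$. Your proposed corrections do not bridge this gap:
\begin{itemize}
\item The $\omega$-component is already inside that determinant; it is why only $p^{\frac{p-3}{2}}$ survives.
\item Iwasawa's theorem combined with the determinant forces $[\mathcal{S}^{-}:p\theta\,\mathbb{Z}[\Delta]^{-}]=(2p)^{\frac{p-3}{2}}$, not $p^{\frac{p-1}{2}}$.
\item The image of $1-\alpha$ is not even contained in the minus summand.
\end{itemize}
The missing idea is to compare $Y$ with $Y^{+}=Y_{\langle j\rangle}$. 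Apply the snake lemma to multiplication by $p\theta$ on $0\to(1-j)\mathbb{Z}[\Delta]\to\mathbb{Z}[\Delta]\xrightarrow{\pi}\mathbb{Z}[\Delta^{+}]\to 0$. Together with $r_{Y}=r_{Y^{+}}$, this gives an exact sequence $0\to\ker\mathcal{BF}_{Y^{+}}/\pi(\ker\mathcal{BF}_{Y})\to\mathbb{Z}[\Delta]^{-}/p\theta\,\mathbb{Z}[\Delta]^{-}\to{\rm BF}(Y)_{\rm tors}\to{\rm BF}(Y^{+})_{\rm tors}\to 0$. Here ${\rm BF}(Y^{+})_{\rm tors}\simeq\mathbb{Z}/p\mathbb{Z}$ because $\pi(p\theta)=-pN_{\Delta^{+}}$; this is the trivial-character $p$. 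Also $\pi(\ker\mathcal{BF}_{Y})=2\ker\mathcal{BF}_{Y^{+}}$ has index $2^{\frac{p-3}{2}}$; this reflects the failure of $\mathbb{Z}[\Delta]$ to split integrally into plus and minus parts. Hence $\#{\rm BF}(Y)_{\rm tors}=(2p)^{\frac{p-3}{2}}h^{-}(K)\cdot p/2^{\frac{p-3}{2}}=p^{\frac{p-1}{2}}h^{-}(K)$, using the minus class number formula alone. This is what the paper does, packaged as $|m(Y)|=2^{\frac{p-3}{2}}\frac{p-1}{2}$, and Iwasawa's index theorem is not needed.
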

The digraph $Y$ appearing in \cref{main1} is a Galois cover of a bouquet digraph $X$ with Galois group $\Delta$; it follows that ${\rm BF}(Y)$ is a $\mathbb{Z}[\Delta]$-module.  \cref{main1} is closely related to a classical theorem of Iwasawa (\cite[Theorem 1]{Iwasawa:1961}) reinterpreting the analytic class number formula on the minus side in terms of the index of the Stickelberger ideal $I^{-}$ inside the minus part of $\mathbb{Z}[\Delta]$, the formal statement being the equality
$$[\mathbb{Z}[\Delta]^{-}:I^{-}] = \#{\rm Cl}^{-}(K).$$
\cref{main1} is also closely related to the work of Schoof in \cite{Schoof:1998}.  We let $\ell$ be a rational prime satisfying 
\begin{equation} \label{div_for_intro}
\ell \nmid \# \Delta = p-1,
\end{equation}
and set $\mathcal{O}$ to be the valuation ring of the local field $\mathbb{Q}_{\ell}(\mu_{p-1})$, where $\mu_{p-1}$ denotes the collection of $(p-1)$-th roots of unity in an algebraic closure $\overline{\mathbb{Q}}_{\ell}$ of $\mathbb{Q}_{\ell}$.  Given $\psi \in \widehat{\Delta}(\mathbb{C}_{\ell}) \subseteq \widehat{\Delta}(\mathcal{O})$, one has the usual associated idempotent
$$e_{\psi} = \frac{1}{p-1}\sum_{\sigma \in \Delta}\psi(\sigma) \sigma^{-1} \in \mathcal{O}[\Delta].$$
We consider the $\mathcal{O}[\Delta]$-modules 
$${\rm BF}_{\mathcal{O}}(Y):= \mathcal{O} \otimes_{\mathbb{Z}_{\ell}} {\rm BF}_{\ell}(Y) \text{ and } {\rm Cl}^{-}_{\mathcal{O}}(K):=\mathcal{O} \otimes_{\mathbb{Z}_{\ell}} {\rm Cl}^{-}_{\ell}(K),$$
where ${\rm BF}_{\ell}(Y) = \mathbb{Z}_{\ell} \otimes_{\mathbb{Z}} {\rm BF}(Y)$ and ${\rm Cl}^{-}_{\ell}(K)$ is the Sylow $\ell$-subgroup of ${\rm Cl}^{-}(K)$.  The second main result that we prove in this article is the following.
\begin{lthm}[\cref{equ_card}] \label{main2}
Assume that $\ell$ is a rational prime satisfying (\ref{div_for_intro}).  If $\ell \neq p$ and $\psi \in \widehat{\Delta}(\mathbb{C}_{\ell}) \subseteq \widehat{\Delta}(\mathcal{O})$ is an odd character, then
$$\#e_{\psi}{\rm BF}_{\mathcal{O}}(Y) = \#e_{\psi}{\rm Cl}^{-}_{\mathcal{O}}(K).$$
Otherwise, if $\psi \in \widehat{\Delta}(\mathbb{C}_{p}) \subseteq \widehat{\Delta}(\mathbb{Z}_{p})$ is an odd character different from the Teichm\"{u}ller character $\omega \in \widehat{\Delta}(\mathbb{Z}_{p})$, one has
$$\#e_{\psi} {\rm BF}_{p}(Y) = p \cdot \# e_{\psi} {\rm Cl}^{-}_{p}(K),$$
and we have also $\#e_{\omega} {\rm BF}_{p}(Y) = \#e_{\omega} {\rm Cl}^{-}_{p}(K) = 1$.
\end{lthm}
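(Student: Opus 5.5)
Since $\ell\nmid p-1$, the group ring $\mathcal{O}[\Delta]$ is a product of copies of $\mathcal{O}$ indexed by the characters $\psi\in\widehat{\Delta}(\mathcal{O})$, and $e_\psi$ picks out the $\psi$-component. I will compute both sides character by character.

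\textbf{The Bowen--Franks side.} Construct $Y$ as the Galois cover of the bouquet $X$ from \cref{a_special_cover}. Its adjacency operator $A$ acts on $\mathbb{Z}[V(Y)]\simeq\mathbb{Z}[\Delta]$ as multiplication by a group ring element $\alpha$, and the construction is arranged so that $1-\alpha = p\theta$ up to a unit or sign. Hence
$$\mathrm{BF}(Y)=\mathrm{coker}(I-A)\simeq \mathbb{Z}[\Delta]/(p\theta)$$
as $\mathbb{Z}[\Delta]$-modules. Tensoring with $\mathcal{O}$ gives $\mathrm{BF}_{\mathcal{O}}(Y)\simeq\mathcal{O}[\Delta]/(p\theta)$. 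Applying $e_\psi$ yields $e_\psi\mathrm{BF}_{\mathcal{O}}(Y)\simeq\mathcal{O}/\psi(p\theta)\mathcal{O}$. For odd $\psi$ we have $\psi(p\theta)=-p\,B_{1,\psi^{-1}}$ by (\ref{bernoulli_def}), and this is nonzero. Therefore
$$\#e_\psi\mathrm{BF}_{\mathcal{O}}(Y)=|p B_{1,\psi^{-1}}|_{\ell}^{-[\mathcal{O}:\mathbb{Z}_\ell]}$$
with the normalized absolute value. When $\ell=p$ we have $\mathcal{O}=\mathbb{Z}_p$, so this is $p\,|B_{1,\psi^{-1}}|_p^{-1}$. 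For $\psi=\omega$, the number $pB_{1,\omega^{-1}}$ is a $p$-adic unit: it is congruent to $\sum_i i\,\omega^{-1}(i)\equiv p-1 \pmod p$. So $e_\omega\mathrm{BF}_p(Y)=0$.

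\textbf{The class group side.} For $\ell\ne p$ (and $\ell\nmid p-1$) I invoke the Mazur--Wiles main conjecture in its form for $\ell$-parts of class groups of abelian fields, as refined by Solomon and Greither via Euler systems. It gives $\#e_\psi\mathrm{Cl}^-_{\mathcal{O}}(K)=\#\mathcal{O}/B_{1,\psi^{-1}}\mathcal{O}$ for odd $\psi\ne\omega$; Gauss sums or Stickelberger annihilation handle the case $\omega$. Since $p$ is a unit in $\mathcal{O}$ when $\ell\ne p$, the factor $p$ disappears and the two cardinalities agree. For $\ell=p$, the Herbrand--Ribet theorem with the Mazur--Wiles refinement, already quoted in the introduction, gives $\#e_{\omega^i}\mathrm{Cl}^-_p(K)=|B_{1,\omega^{-i}}|_p^{-1}$ for odd $3\le i\le p-2$. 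This is exactly $p^{-1}\#e_\psi\mathrm{BF}_p(Y)$. For $\omega$ itself, $e_\omega\mathrm{Cl}_p(K)=0$ is classical (Herbrand), which matches.

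\textbf{Main obstacle.} The hard input is the $\ell\ne p$ case of the class-group equality. It requires the full main conjecture for characters of order prime to $\ell$ (Mazur--Wiles), together with the bookkeeping that converts $\psi$-eigenspaces over $\mathcal{O}$ into the stated cardinalities. A lesser subtlety is keeping track of $\psi$ versus $\psi^{-1}$ in the idempotent convention. The idempotent $e_\psi$ contains $\sigma^{-1}$, so the component sees $\psi(p\theta)=-pB_{1,\psi^{-1}}$, and the class-group side must be indexed the same way.
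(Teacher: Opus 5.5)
Your proposal is correct and follows essentially the paper's route. You identify ${\rm BF}_{\mathcal{O}}(Y)$ with $\mathcal{O}[\Delta]/(p\theta)$ and read off $e_\psi{\rm BF}_{\mathcal{O}}(Y)\simeq\mathcal{O}/\psi(p\theta)\mathcal{O}$ with $\psi(p\theta)=-pB_{1,\psi^{-1}}$. You then compare with Mazur--Wiles, and handle $\omega$ via $pB_{1,\omega^{-1}}\equiv p-1\pmod p$ together with $e_\omega{\rm Cl}_p(K)=0$ (Washington, Prop.~6.16).

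One correction: for $\ell\neq p$ there is no exceptional character. The Teichm\"uller character $\omega$ is a $\mathbb{Z}_p$-valued object. The character excluded by Mazur--Wiles at the prime $\ell$ is the one describing the action on $\mu_\ell$, which has conductor $\ell$ and so does not factor through $\Delta$. Hence Mazur--Wiles already gives $\#e_\psi{\rm Cl}^-_{\mathcal{O}}(K)=|B_{1,\psi^{-1}}|_\ell^{-f_\ell}$ for every odd $\psi$. Your appeal to ``Gauss sums or Stickelberger annihilation'' should therefore be dropped; annihilation alone would not determine the order of $e_\psi{\rm Cl}^-_{\mathcal{O}}(K)$ anyway.
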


\subsection*{Outlook} 
In this article, we have chosen to concentrate on establishing a link between the minus class group of a cyclotomic field with prime conductor and the Bowen--Franks group of a digraph.  A motivation for the present work is to establish a foundation for connecting the Iwasawa theory of digraph coverings with the classical Iwasawa theory of number fields. In the latter, one studies the growth of the minus part of the ideal class groups of the cyclotomic fields $\mathbb{Q}(\mu_{p^n})$ as $n \to \infty$. A central tool is the $p$-adic $L$-function and its relationship, via the Iwasawa main conjecture, to the structure of the corresponding Iwasawa modules. This connection allows one to describe the asymptotic behavior of these minus parts in terms of invariants of the $p$-adic $L$-function.
Analogous results have been obtained for undirected graphs in a series of works \cite{Vallieres:2021,McGown/Vallieres:2021,McGown/Vallieres3:2021,Gonet:2021,Gonet:2022,Kleine/Muller:2024}. More recently, some of these results have been extended to the context of digraphs in \cite{Lei/Muller:2026}. We hope that the present work will help establish a more concrete relationship between this emerging theory and the classical one.

\subsection*{Organization}
The article is organized as follows.  In \cref{section:digraphs}, we introduce the relevant notations and concepts pertaining to digraphs such as the Bowen--Franks operator, the Bowen--Franks group, the zeta function, its special value at $u=1$, and we study how these objects behave in covering maps of digraphs.  In \cref{section:equivariant}, we place ourselves in an equivariant setting and study the objects introduced in \cref{section:digraphs}, taking into account the action of a finite abelian group. In \cref{section:voltage}, we review the notion of voltage assignment and derived graph originally due to \cite{Gross:1974}, which gives a convenient way of constructing Galois covers of digraphs.  In \cref{section:special_cover}, using a particular voltage assignment on a bouquet digraph $X$, we introduce the Galois cover $Y$ of $X$ with Galois group $\Delta$, and we obtain our main results connecting the Bowen--Franks group of $Y$ with the minus class groups of $K = \mathbb{Q}(\zeta_{p})$.  The article has two appendices. \cref{gr_acting_on_digraph} presents basic results concerning the Galois theory of coverings of digraphs. These results are likely already well-known to experts. We include full details there because we are not aware of a reference in the literature where they are written down. Finally, in \cref{app:alternative}, we show that the Bowen--Franks operator for the digraph $Y$ can be represented by an explicit circulant matrix. This observation enables us to give a different proof of \cref{thm:m(Y)}, which plays a key role in our proof of \cref{main1}. The proof in \cref{app:alternative} is based on standard properties of circulant matrices and elementary facts about polynomial resultants, which may be of independent interest.

\subsection*{Acknowledgments}
The authors thank Rusiru Gambheera for valuable input and comments during the preparation of this article. DV thanks John Lind, Thomas Mattman, Kevin McGown, and Riccardo Pengo for various stimulating discussions related to the matters of this article. AL's research is supported by the NSERC Discovery Grants Program RGPIN-2026-04351. DV acknowledges support from an AMS-Simons Research Enhancement Grant for PUI Faculty.

%\section{Digraphs}
\section{Digraphs} \label{section:digraphs}
A digraph $X = (V_{X},E_{X})$ consists of a collection of vertices $V_{X}$ and a collection of edges $E_{X}$ with an incidence map
$${\rm inc}:E_{X} \rightarrow V_{X} \times V_{X},$$
which we denote by $e \mapsto {\rm inc}(e) = (o(e),t(e))$.  The vertex $o(e)$ is called the origin vertex of $e$, and $t(e)$ the terminal vertex of $e$.  A digraph $X$ is called finite if both $V_{X}$ and $E_{X}$ are finite sets.  Given $v \in V_{X}$, we let
$$E_{X,v}^{o} = \{e \in E_{X}: o(e) = v \} \text{ and } E_{X,v}^{t} = \{e \in E_{X} : t(e) = v \}. $$
A digraph is called locally finite if $E_{X,v}^{o}$ and $E_{X,v}^{t}$ are finite sets for all $v \in V_{X}$.

A path $c = e_{1} \cdot \ldots \cdot e_{m}$ in a digraph $X$ consists of a finite sequence of edges $e_{i} \in E_{X}$ satisfying $t(e_{i}) = o(e_{i+1})$ for $i=1,\ldots,m-1$.  The length of the path $c$ is $m$, and is denoted by ${\rm len}(c)$.  We let $o(c) = o(e_{1})$ and $t(c) = t(e_{m})$, and a path is called closed if $o(c) = t(c)$. An $n$-multiple of a closed path $c$ is a path obtained by repeating $c$ $n$-times. A path is said to be prime if it is not an $n$-multiple of a closed path with $n
\ge2$. We call two closed paths equivalent if one is obtained from the other by a cyclic permutation of edges. A prime cycle is a class of prime closed paths with respect to this equivalence relation. 

A digraph $X$ is called strongly connected if given any two distinct vertices $v_{1},v_{2} \in V_{X}$, there is a path $c$ in $X$ such that $o(c) = v_{1}$ and $t(c) = v_{2}$.

Let $Y = (V_{Y},E_{Y})$ and $X = (V_{X},E_{X})$ be two digraphs.  A morphism $f:Y \rightarrow X$ of digraphs is a pair of functions
$$f_{V}:V_{Y} \rightarrow V_{X} \text{ and } f_{E}:E_{Y} \rightarrow E_{X} $$
satisfying $f_{V}(o(\varepsilon)) = o(f_{E}(\varepsilon))$ and $f_{V}(t(\varepsilon)) = t(f_{E}(\varepsilon))$ for all $\varepsilon \in E_{Y}$. Quite often, we will also denote both $f_{V}$ and $f_{E}$ simply by $f$.  An isomorphism of digraphs is a morphism $f:Y \rightarrow X$ for which both $f_{V}$ and $f_{E}$ are bijections.  The group of automorphisms of a digraph $Y$ will be denoted by ${\rm Aut}(Y)$.

%\subsection{The Bowen--Franks operator the Bowen--Franks group}
\subsection{The Bowen--Franks operator and the Bowen--Franks group}
Given a digraph $X = (V_{X},E_{X})$, we let $\mathbb{Z}V_{X}$ be the free abelian group on $V_{X}$.  When $X$ is \emph{locally finite}, we consider the adjacency operator $\mathcal{A}_{X}:\mathbb{Z}V_{X} \rightarrow \mathbb{Z}V_{X}$ defined via 
$$v \mapsto \mathcal{A}_{X}(v) = \sum_{\substack{e \in E_{X,v}^{o}}} t(e).$$
The identity operator on $\mathbb{Z}V_{X}$ will be denoted by $\mathcal{I}_{X}$.
\begin{definition}
Let $X$ be a locally finite digraph.  The Bowen--Franks operator $\mathcal{BF}_{X}$ on $\mathbb{Z}V_{X}$ and the Bowen--Franks group ${\rm BF}(X)$ of $X$ are defined to be $\mathcal{BF}_{X} = \mathcal{I}_{X} - \mathcal{A}_{X}$ and ${\rm BF}(X) = {\rm coker}(\mathcal{BF}_{X})$, respectively. 
\end{definition}
Note that if $X$ is a finite digraph, then ${\rm BF}(X)$ is a finitely generated abelian group.  Throughout this article, we will sometimes drop the index $X$ from the notation and write $\mathcal{A}$ and $\mathcal{BF}$, instead of $\mathcal{A}_{X}$ and $\mathcal{BF}_{X}$, if the digraph $X$ is understood from the context.

%\section{The Artin--Mazur zeta function}
\subsection{The Artin--Mazur zeta function}  
Our main reference for this section is \cite{Kotani/Sunada:2000}.  Let $X$ be a strongly connected \emph{finite} digraph.  Its zeta function is defined to be the formal power series
$$Z_{X}(u) = \exp \left(\sum_{m=1}^{\infty} N_{m} \frac{u^{m}}{m} \right) \in 1 + u\mathbb{Q} \llbracket u \rrbracket, $$
where $N_{m}$ is the number of closed paths of length $m$ in $X$. This function is an example of an Artin--Mazur zeta function; it is the Artin--Mazur zeta function associated with the shift of finite type corresponding to the digraph $X$, namely, the edge shift.  By \cite[Theorem 2.3]{Kotani/Sunada:2000}, one has
$$Z_{X}(u) = \prod_{\mathfrak{c}}(1 - u^{\rm len(\mathfrak{c})})^{-1}, $$
where the product is over all prime cycles in $X$. This shows, in particular, that $Z_{X}(u) \in 1 + u \mathbb{Z}\llbracket u \rrbracket$.  Moreover, \cite[Lemma 2.2]{Kotani/Sunada:2000} shows that $Z_{X}(u)$ is a rational function and, more precisely
$$Z_{X}(u)^{-1} = {\rm det}(\mathcal{I}_{X} - \mathcal{A}_{X}u) \in \mathbb{Z}[u].  $$
From now on, we write
\begin{equation} \label{AM_zeta}
g_{X}(u) = {\rm det}(\mathcal{I}_{X} - \mathcal{A}_{X}u) \in \mathbb{Z}[u]. 
\end{equation}
The zeta function $Z_{X}(u)$ is sometimes called the Bowen--Lanford zeta function in the literature, since Bowen and Lanford were the first to notice that it is a rational function in \cite{Bowen/Lanford:1970}.

%\subsection{The special value \texorpdfstring{$u=1$}{} of the zeta function}
\subsection{The special value \texorpdfstring{$u=1$}{} of the zeta function}
Let $F$ be a field of characteristic zero.  Given a polynomial $P(u) \in F[u]$, we can consider its Taylor expansion around $1$, namely
$$P(u) = a_{0} + a_{1}(u-1) + a_{2}(u-1)^{2} + \ldots,$$
for some $a_{i} \in F$.  From now on, we let $P^{*}(1)$ be the first non-vanishing Taylor coefficient at $u=1$ of the polynomial $P(u)$.  \begin{comment}In other words, one has
$$P(u) = P^{*}(1)(u-1)^{r} + \ldots,$$
where $P^{*}(1) \neq 0$, and
$$r = {\rm ord}_{u=1}(P(u))$$
is the order of vanishing of $P(u)$ at $u=1$.  Note that in terms of derivatives, one has
$$P^{*}(1) = \frac{P^{(r)}(1)}{r!}.$$
\end{comment}
In particular, if $X$ is a strongly connected finite digraph, then we let
$$r_{X} = {\rm ord}_{u=1}(g_{X}(u)),$$
and we will be particularly interested in the special value $g_{X}^{*}(1)$.  If $r_{X} = 0$, then $g_{X}^{*}(1) = g_{X}(1)$ is the Parry-Sullivan invariant of the edge shift associated with the finite digraph $X$ introduced in \cite{Parry/Sullivan:1975}.  The Bowen--Franks group and the Parry-Sullivan invariant play an essential role in the classification of irreducible shifts of finite type of positive entropy up to flow equivalence (see \cite{Parry/Sullivan:1975,Bowen/Franks:1977,Franks:1984} and also \cite[\S 13.6]{Marcus/Lind:2021}).

If $V$ is a finite dimensional vector space, $f$ is an $F$-linear operator on $V$, and $\lambda$ is an eigenvalue for $f$, we let $\alpha_{f}(\lambda)$ denote its algebraic multiplicity and $\beta_{f}(\lambda)$ its geometric multiplicity with the understanding that $\alpha_{f}(\lambda) = \beta_{f}(\lambda) = 0$ when $\lambda \in F$ is not an eigenvalue.  Throughout this article, we abbreviate $R \otimes_{\mathbb{Z}} \mathbb{Z}V_{X}$ by $RV_{X}$, whenever $R$ is a unital commutative ring. The following proposition is an elementary fact from linear algebra.
\begin{proposition} \label{ord_of_va}
Let $X$ be a strongly connected finite digraph.  Then,
$$r_{X} =  \alpha_{\mathcal{A}_{F}}(1) \text{ and }  {\rm rank}_{\mathbb{Z}}\,\BF(X) = \beta_{\mathcal{A}_{F}}(1), $$
where $\mathcal{A}_{F}:FV_{X} \rightarrow FV_{X}$ denotes the base change of $\mathcal{A}$ to $F$.
\end{proposition}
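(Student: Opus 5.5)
Both equalities reduce to linear algebra over $F$ applied to the integer matrix of $\mathcal{A}$; we treat them separately. Fix an ordering of $V_{X}$, so that $\mathcal{A}$ and $\mathcal{BF} = \mathcal{I} - \mathcal{A}$ are given by integer matrices of size $n = \#V_{X}$. Since $F$ has characteristic zero, $\mathbb{Q} \subseteq F$, and all determinants and ranks can be computed over $F$.

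For the first equality, let $\chi(t) = \det(t\mathcal{I} - \mathcal{A}_{F}) \in F[t]$ be the characteristic polynomial. Factor it over an algebraic closure $\overline{F}$ as $\chi(t) = \prod_{j}(t - \lambda_{j})$, where the $\lambda_{j}$ are listed with algebraic multiplicity. Then
$$g_{X}(u) = \det(\mathcal{I} - \mathcal{A}u) = \prod_{j}(1 - \lambda_{j}u).$$
A factor $1 - \lambda_{j}u$ vanishes at $u = 1$ exactly when $\lambda_{j} = 1$, and in that case it equals $1 - u$, which vanishes to order exactly one. The factors with $\lambda_{j} \neq 1$ are nonzero at $u=1$. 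Hence
$${\rm ord}_{u=1}\, g_{X}(u) = \#\{j : \lambda_{j} = 1\} = \alpha_{\mathcal{A}_{F}}(1).$$
Equivalently, one can use $u^{n}\chi(1/u) = g_{X}(u)$ and observe that the substitution $t = 1/u$ preserves orders of vanishing at the point $1$. The order of vanishing does not change under passage to $\overline{F}$, since it is detected by divisibility by $u - 1$ in $F[u]$.

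For the second equality, ${\rm BF}(X) = \coker(\mathcal{BF})$ is a finitely generated abelian group, so its rank is $\dim_{\mathbb{Q}} \mathbb{Q} \otimes_{\mathbb{Z}} {\rm BF}(X)$. Tensoring with $\mathbb{Q}$ is right exact, so $\mathbb{Q} \otimes {\rm BF}(X) \simeq \coker(\mathcal{BF}_{\mathbb{Q}})$. This gives
$${\rm rank}_{\mathbb{Z}}\,{\rm BF}(X) = n - {\rm rank}(\mathcal{I} - \mathcal{A}) = \dim \ker(\mathcal{I} - \mathcal{A}).$$
The rank of an integer matrix is the same over $\mathbb{Q}$ and over $F$, because it is determined by the vanishing of minors. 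The dimension $\dim_{F}\ker(\mathcal{I} - \mathcal{A}_{F})$ is by definition $\beta_{\mathcal{A}_{F}}(1)$; this includes the convention that it equals $0$ when $1$ is not an eigenvalue.

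No step is a serious obstacle. The one point needing care is the invariance under field extension: $\alpha$ and $\beta$ are defined over $F$, and we must know they agree with the corresponding quantities over $\mathbb{Q}$ and over $\overline{F}$. This follows from the facts that ranks of matrices and divisibility of polynomials with coefficients in a subfield do not change under field extension. Strong connectivity is not used in this argument; it is only needed to place us in the setting where $g_{X}$ is defined.
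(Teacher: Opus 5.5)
Your proof is correct and is the elementary linear algebra the paper has in mind; the paper gives no written proof beyond calling the statement an elementary fact. Your factorization $g_{X}(u)=\prod_{j}(1-\lambda_{j}u)$ is the same one underlying \cref{zeta_explicit}, and your rank argument is the standard identification $\mathbb{Q}\otimes_{\mathbb{Z}}{\rm BF}(X)\simeq\coker(\mathcal{BF}_{\mathbb{Q}})$ combined with the invariance of matrix rank under field extension.
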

As a consequence of \cref{ord_of_va}, we obtain the following corollary.
\begin{corollary} \label{ineq}
If $X$ is a strongly connected finite digraph, then
$$r_{X} \ge {\rm rank}_{\mathbb{Z}}\BF(X). $$
\end{corollary}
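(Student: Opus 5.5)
The plan is to deduce the inequality directly from \cref{ord_of_va}, choosing $F = \mathbb{Q}$ (any field of characteristic zero works). By that proposition,
$$r_{X} = \alpha_{\mathcal{A}_{F}}(1) \quad\text{and}\quad {\rm rank}_{\mathbb{Z}}\,\BF(X) = \beta_{\mathcal{A}_{F}}(1).$$
So it is enough to show that, for the linear operator $\mathcal{A}_{F}$ on the finite-dimensional space $FV_{X}$, the geometric multiplicity of $1$ is at most its algebraic multiplicity.

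This is the standard linear-algebra inequality $\beta_{f}(\lambda) \le \alpha_{f}(\lambda)$, and I would verify it as follows. If $1$ is not an eigenvalue, both multiplicities vanish by the convention fixed before \cref{ord_of_va}, and there is nothing to prove. Otherwise, let $W = \ker(\mathcal{A}_{F} - \mathcal{I})$, so that $\dim W = \beta_{\mathcal{A}_{F}}(1)$. Choose a basis of $W$ and extend it to a basis of $FV_{X}$. In this basis the matrix of $\mathcal{A}_{F}$ is block upper triangular, with an identity block of size $\dim W$ in the upper-left corner. Its characteristic polynomial is therefore divisible by $(x-1)^{\dim W}$, which gives $\alpha_{\mathcal{A}_{F}}(1) \ge \beta_{\mathcal{A}_{F}}(1)$.

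Combining this with the two equalities above yields $r_{X} \ge {\rm rank}_{\mathbb{Z}}\BF(X)$. There is no real obstacle: all the substantive content, namely identifying $r_X$ with the algebraic multiplicity via $g_X(u)=\det(\mathcal{I}-\mathcal{A}u)$ and the rank of the cokernel with $\dim\ker$ after tensoring with $F$, is already contained in \cref{ord_of_va}.
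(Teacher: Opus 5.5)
Your proposal is correct and matches the paper, which also obtains the corollary directly from \cref{ord_of_va} together with the standard inequality that geometric multiplicity is at most algebraic multiplicity. Your block upper triangular verification of that inequality is exactly the standard argument, so nothing is missing.
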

The equality holds in some instances, e.g., if $r_X\in \{0,1\}$.  Moreover, we will show later that the equality holds for abelian Cayley--Serre digraphs (see \cref{delta_zero_cs} below).  
On the other hand, there are strongly connected finite digraphs for which the equality does not happen. See for instance \cite[Example 7.2]{Lei/Muller:2026}.  Following \cite[Definition 7.1]{Lei/Muller:2026}, we make the following definition.
\begin{definition}
Let $X$ be a strongly connected finite digraph.  We define
\begin{equation*}
\delta(X) = r_{X} - {\rm rank}_{\mathbb{Z}}{\rm BF}(X) \in \mathbb{Z}_{\ge 0}.
\end{equation*}
\end{definition}

It follows directly from \eqref{AM_zeta} that the zeta function of $X$, as well as its special value $g_{X}^{*}(1)$, can be explicitly given in terms of the eigenvalues of $\mathcal{A}_{F}$:  

\begin{lemma} \label{zeta_explicit}
Let $X$ be a strongly connected finite digraph, and let $F$ be an algebraically closed field of characteristic zero.  Consider the morphism of $F$-vector spaces $\mathcal{A}_{F}:FV_{X} \rightarrow FV_{X}$ obtained from the adjacency operator $\mathcal{A}_{X}$ by extending the scalars to $F$.  Label the eigenvalues  $\lambda_{1},\ldots,\lambda_{n}$ with the understanding that 
$$\lambda_{1}= \ldots = \lambda_{r} = 1,$$ 
and $\lambda_{i} \neq 1$ for $i \notin \{1,\ldots,r \}$.  In other words, $r = r_{X} =  \alpha_{\mathcal{A}_{F}}(1)$.  Then, one has
$$g_{X}(u) = (1-u)^{r} \prod_{i = r+1}^{n}(1 - \lambda_{i}u).$$
Moreover,
$$g_{X}^{*}(1) = (-1)^{r} \prod_{i=r+1}^{n} (1 - \lambda_{i}).$$
\end{lemma}
\begin{comment}
\begin{proof}
It follows from \eqref{AM_zeta} that
\begin{equation*}
\begin{aligned}
g_{X}(u) &= \prod_{i=1}^{n}(1 - \lambda_{i}u) = (1-u)^{r} \cdot \prod_{i=r+1}^{n}(1 - \lambda_{i}u).
\end{aligned}
\end{equation*}
Let 
$$P(u) = (1-u)^{r} \text{ and } Q(u) = \prod_{i=r+1}^{n}(1 - \lambda_{i}u).$$
A repeated application of the Leibniz rule gives 
$$g_{X}^{(r)}(u) = P^{(r)}(u) Q(u) + R(u),$$
for some $R(u) \in F[u]$ satisfying $R(1) = 0$.  It follows that
\begin{equation}
g_{X}^{(r)}(1) = P^{(r)}(1) \cdot Q(1) = P^{(r)}(1) \cdot \prod_{i=r+1}^{n}(1 - \lambda_{i}).
\end{equation}
A simple calculation shows that $P^{(r)}(u) = r! (-1)^{r}$ from which the result follows.
\end{proof}
\end{comment}
The following theorem relates the special value $g_{X}^{*}(1)$ with the cardinality of the torsion part of the Bowen--Franks group assuming $\delta(X) = 0$. 
\begin{theorem} \label{the_special_value}
Let $X$ be a strongly connected finite digraph.  If $\delta(X) = 0$, then there exists a nonzero integer $m(X) \in \mathbb{Z}$ such that
$$g_{X}^{*}(1) =  m(X) \cdot \# {\rm BF}(X)_{\rm tors}.$$
\end{theorem}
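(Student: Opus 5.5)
Write $M = \mathcal{I}-\mathcal{A}$ for the integer matrix of $\mathcal{BF}_X$ with respect to the basis $V_X$, and set $n = \#V_X$ and $r = r_X$. The hypothesis $\delta(X)=0$ says that $\beta_{\mathcal{A}_F}(1)=\alpha_{\mathcal{A}_F}(1)=r$. In other words, the eigenvalue $0$ of $M$ is semisimple: the generalized $0$-eigenspace of $M$ coincides with $\ker M_F$, which has dimension $r$.

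The plan is to compare two descriptions of the coefficient of $t^r$ in the characteristic polynomial $\det(t\mathcal{I}-M)$.

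\textbf{Step 1 (spectral description).} The eigenvalues of $M$ are the numbers $1-\lambda_i$, so
$$\det(t\mathcal{I}-M)=t^r\prod_{i>r}\bigl(t-(1-\lambda_i)\bigr).$$
The coefficient of $t^r$ is therefore $(-1)^{n-r}\prod_{i>r}(1-\lambda_i)$. By \cref{zeta_explicit}, this equals $\pm g_X^*(1)$.

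\textbf{Step 2 (minor description).} The same coefficient equals $(-1)^{n-r}$ times the sum $E_{n-r}(M)$ of all principal $(n-r)\times(n-r)$ minors of $M$. So, up to sign, $g_X^*(1)=E_{n-r}(M)$.

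\textbf{Step 3 (divisibility).} By the Smith normal form, $\mathrm{BF}(X)\cong \mathbb{Z}^{r}\oplus\bigoplus_{j=1}^{n-r}\mathbb{Z}/d_j$, where $d_1\mid\cdots\mid d_{n-r}$ are the nonzero invariant factors. Here we use $\operatorname{rank}M=n-r$, which follows from \cref{ord_of_va} together with $\delta(X)=0$. Hence $\#\mathrm{BF}(X)_{\rm tors}=d_1\cdots d_{n-r}$. This product is the $(n-r)$-th determinantal divisor, that is, the gcd of all $(n-r)\times(n-r)$ minors of $M$. Every principal minor of that size is therefore divisible by $\#\mathrm{BF}(X)_{\rm tors}$, and so is their sum. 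Setting $m(X)=\pm E_{n-r}(M)/\#\mathrm{BF}(X)_{\rm tors}\in\mathbb{Z}$ gives the required identity.

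\textbf{Step 4 (nonvanishing).} By construction $\prod_{i>r}(1-\lambda_i)\neq 0$, so $g_X^*(1)\neq0$, and hence $m(X)\neq0$.

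The main point to check carefully is the sign bookkeeping, since $m(X)$ absorbs the sign. Note that semisimplicity is used only for the rank count $\operatorname{rank}M=n-r$ in Step 3. Without it, $g_X^*(1)$ would be a sum of minors of size smaller than the rank. Those minors need not be divisible by $d_1\cdots d_{\operatorname{rank}M}$, and the argument would fail.
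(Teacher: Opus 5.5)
Your proposal is correct and is essentially the paper's proof. Both arguments identify $g_X^*(1)$, up to sign, with the sum of the principal $(n-r)\times(n-r)$ minors of the Bowen--Franks matrix, identify $\#{\rm BF}(X)_{\rm tors}$ with the gcd of all $(n-r)\times(n-r)$ minors, and use $\delta(X)=0$ to match $r$ with ${\rm rank}_{\mathbb{Z}}{\rm BF}(X)$. The differences are cosmetic: the paper phrases the gcd statement through the Fitting ideal ${\rm Fit}^{s}_{\mathbb{Z}}({\rm BF}(X))$ rather than Smith normal form, and you additionally spell out why $m(X)\neq 0$.
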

\begin{proof}
First, note that the characteristic polynomials of $\mathcal{A}$ and $\mathcal{BF}$ are related via the equality
\begin{equation} \label{char_eq}
\chi_{\raisebox{-0.5ex}{$\scriptstyle \mathcal{A}$}}(t) = (-1)^{n} \cdot \chi_{\raisebox{-0.5ex}{$\scriptstyle \mathcal{BF}$}}(1-t),
\end{equation}
where $n = \# V_{X}$.  Label the vertices of $X$, say $V_{X} = \{v_{1},\ldots,v_{n}\}$, and let $B \in M_{n}(\mathbb{Z})$ be the matrix representing the Bowen--Franks operator $\mathcal{BF}_{X}$ with respect to the ordered basis $(v_{1},\ldots,v_{n})$.  Recall that we have
$$\chi_{\raisebox{-0.5ex}{$\scriptstyle \mathcal{BF}$}}(t) = \sum_{i=0}^{n} (-1)^{i} c_{i}(B) t^{n-i},$$
where $c_{i}(B)$ is the sum of principal minors of size $i$ of $B$, for $i=0,\ldots,n$.  On the one hand, it follows from (\ref{char_eq}) that 
$$g_{X}^{*}(1) = \pm c_{n-r}(B) \in \mathbb{Z},$$
where $r = \alpha_{\mathcal{A}_{F}}(1)$, and on the other hand,
$${\rm Fit}^{s}_{\mathbb{Z}}({\rm BF}(X)) = (\# {\rm BF}(X)_{\rm tors}),$$
where $s = {\rm rank}_{\mathbb{Z}}({\rm BF}(X))$.  By definition,  ${\rm Fit}^{s}_{\mathbb{Z}}({\rm BF}(X))$ is the $(n - s)$-th determinantal ideal of $B$; in other words $\#{\rm BF}(X)_{\rm tors}$ is the greatest common divisor of the $(n-s)\times(n-s)$ minors of $B$.  The assumption $\delta(X)=0$ implies that $r = s$, and thus we have $\#{\rm BF}(X)_{\rm tors} \mid g_{X}^{*}(1)$ as desired.
\end{proof}

\begin{remark}
If $r_{X}=0$, then $\delta(X)=0$ and $m(X) = \pm 1$. Indeed, in this case
$$g_{X}^{*}(1) = g_{X}(1) = \pm {\rm det}(B) = \pm \# {\rm BF}(X)_{\rm tors} = \pm \# {\rm BF}(X),$$
and $g_{X}(1)$ is the Parry-Sullivan invariant of $X$.  Recall that the sign of the Parry-Sullivan invariant plays a role in the classification of irreducible shifts of finite type (see the first paragraph of \cite[page 54]{Franks:1984}).
\end{remark}

The absolute value $|m(X)|$ of the integer $m(X)$ appearing in \cref{the_special_value} can be interpreted as the cardinality of a finite abelian group as we now explain (see \cref{m_as_card} below).  Consider the Bowen--Franks operator $\mathcal{BF}:\mathbb{Z}V_{X} \rightarrow \mathbb{Z}V_{X}$ and its base change to $\mathbb{Q}$, namely 
$$\mathcal{BF}_{\mathbb{Q}}:\mathbb{Q}V_{X} \rightarrow \mathbb{Q}V_{X}.$$
Consider the Fitting decomposition
$$\mathbb{Q}V_{X} = N \oplus W,$$
where $N,W$ are $\mathbb{Q}$-subspaces of $\mathbb{Q}V_{X}$ invariant under $\mathcal{BF}_{\mathbb{Q}}$, and where $\mathcal{BF}_{\mathbb{Q}}$ is nilpotent on $N$ and an automorphism of $\mathbb{Q}$-vector spaces on $W$.  If one assumes that $\delta(X) = 0$, then one has
\begin{equation} \label{simple_fitting_dec}
N = {\rm ker}(\mathcal{BF}_{\mathbb{Q}}) \text{ and } W = {\rm Im}(\mathcal{BF}_{\mathbb{Q}}).
\end{equation}
\begin{definition}\label{def:L(X)}
Let $X$ be a strongly connected finite digraph for which $\delta(X) = 0$, and let $\mathcal{BF}:\mathbb{Z}V_{X} \rightarrow \mathbb{Z}V_{X}$ be the Bowen--Franks operator.  As above, consider the Fitting decomposition
$$\mathbb{Q}V_{X} = {\rm ker}(\mathcal{BF}_{\mathbb{Q}}) \oplus {\rm Im}(\mathcal{BF}_{\mathbb{Q}}) $$
associated with the operator $\mathcal{BF}_{\mathbb{Q}}$.  We define
$$\mathcal{L}(X) = \mathbb{Z}V_{X} \cap W,$$
where we view $\mathbb{Z}V_{X}$ inside of $\mathbb{Q}V_{X}$ via the natural inclusion $\mathbb{Z}V_{X} \hookrightarrow \mathbb{Q}V_{X}$.
\end{definition}
Note that since $\mathcal{L}(X) \leq \mathbb{Z}V_{X}$ and $\mathbb{Z}V_{X}$ is free of finite rank over $\mathbb{Z}$, so is $\mathcal{L}(X)$.
\begin{proposition} \label{inclusions}
Let $X$ be a strongly connected finite digraph for which $\delta(X) = 0$, and let $\mathcal{BF}:\mathbb{Z}V_{X} \rightarrow \mathbb{Z}V_{X}$ be the Bowen--Franks operator.  Then, one has
$$\mathcal{BF}(\mathcal{L}(X)) \subseteq \mathcal{BF}(\mathbb{Z}V_{X}) \subseteq \mathcal{L}(X),$$
and
$${\rm rank}_{\mathbb{Z}}(\mathcal{BF}(\mathcal{L}(X))) = {\rm rank}_{\mathbb{Z}}(\mathcal{BF}(\mathbb{Z}V_{X})) = {\rm rank}_{\mathbb{Z}}(\mathcal{L}(X)).$$
\end{proposition}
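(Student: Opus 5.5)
The plan is to prove the two inclusions first and then compare ranks using the Fitting decomposition (\ref{simple_fitting_dec}). Throughout, $W$ denotes ${\rm Im}(\mathcal{BF}_{\mathbb{Q}})$.

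The first inclusion $\mathcal{BF}(\mathcal{L}(X)) \subseteq \mathcal{BF}(\mathbb{Z}V_{X})$ is immediate, since $\mathcal{L}(X) \subseteq \mathbb{Z}V_{X}$. For the second inclusion, take $x \in \mathbb{Z}V_{X}$. Then $\mathcal{BF}(x)$ lies in $\mathbb{Z}V_{X}$ because $\mathcal{BF}$ has integer matrix entries. Its image in $\mathbb{Q}V_{X}$ is $\mathcal{BF}_{\mathbb{Q}}(x)$, which lies in ${\rm Im}(\mathcal{BF}_{\mathbb{Q}}) = W$. Hence $\mathcal{BF}(x) \in \mathbb{Z}V_{X} \cap W = \mathcal{L}(X)$.

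For the ranks, set $d = \dim_{\mathbb{Q}} W$. The key claim is that $\mathcal{L}(X)$ spans $W$ over $\mathbb{Q}$, so that ${\rm rank}_{\mathbb{Z}} \mathcal{L}(X) = d$. Since $\mathcal{L}(X) \subseteq W$, its rank is at most $d$. Conversely, $W$ is spanned by the vectors $\mathcal{BF}_{\mathbb{Q}}(v)$ for $v \in V_{X}$, and these lie in $\mathcal{BF}(\mathbb{Z}V_{X}) \subseteq \mathcal{L}(X)$. So $\mathbb{Q}\mathcal{L}(X) = W$, which gives rank exactly $d$. The same spanning argument shows ${\rm rank}_{\mathbb{Z}} \mathcal{BF}(\mathbb{Z}V_{X}) = d$.

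The remaining step, and the only point where $\delta(X) = 0$ is really used, is ${\rm rank}_{\mathbb{Z}} \mathcal{BF}(\mathcal{L}(X)) = d$. Here I use the Fitting decomposition: because $\delta(X) = 0$, we have $N = \ker(\mathcal{BF}_{\mathbb{Q}})$ and $\mathcal{BF}_{\mathbb{Q}}$ restricts to an automorphism of $W$. Since $\mathcal{L}(X)$ spans $W$, its image $\mathcal{BF}(\mathcal{L}(X))$ spans $\mathcal{BF}_{\mathbb{Q}}(W) = W$. Therefore $\mathcal{BF}(\mathcal{L}(X))$ has rank $d$, equivalently $\mathcal{BF}|_{\mathcal{L}(X)}$ is injective.

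The spanning claims are routine, so I expect no step to be a real obstacle. The only subtle point is that one must justify (\ref{simple_fitting_dec}) from $\delta(X) = 0$. By \cref{ord_of_va}, $\delta(X) = 0$ means the algebraic multiplicity $\alpha_{\mathcal{A}_{F}}(1)$ equals the geometric multiplicity $\beta_{\mathcal{A}_{F}}(1)$. Hence the generalized kernel of $\mathcal{BF}_{\mathbb{Q}} = \mathcal{I} - \mathcal{A}$ equals its kernel. This forces $\ker(\mathcal{BF}_{\mathbb{Q}}) \cap {\rm Im}(\mathcal{BF}_{\mathbb{Q}}) = 0$, and a dimension count then gives the direct sum.
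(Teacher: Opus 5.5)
Your proposal is correct and follows the paper's argument: the inclusions are immediate, and the rank equalities come from $\mathcal{BF}_{\mathbb{Q}}$ being injective (indeed an automorphism) on $W$. The paper uses a sandwich, where injectivity on $\mathcal{L}(X)$ gives ${\rm rank}_{\mathbb{Z}}\,\mathcal{BF}(\mathcal{L}(X)) = {\rm rank}_{\mathbb{Z}}\,\mathcal{L}(X)$ and the inclusions force the middle rank to agree; you instead compute all three ranks directly as $\dim_{\mathbb{Q}} W$, and you also justify (\ref{simple_fitting_dec}) from $\delta(X)=0$, which the paper only asserts.
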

\begin{proof}
The inclusions are clear from the definitions, and since $\mathcal{BF}_{\mathbb{Q}}|_{W}$ is injective, we have an injective group morphism
$$\mathcal{L}(X) \hookrightarrow \mathcal{BF}(\mathcal{L}(X))$$
induced by $\mathcal{BF}$.  The equalities of ranks then follow.
\end{proof}

\begin{definition}
Let $X$ be a strongly connected finite digraph for which $\delta(X) = 0$, and let $\mathcal{BF}:\mathbb{Z}V_{X} \rightarrow \mathbb{Z}V_{X}$ be the Bowen--Franks operator.  We define
$$\mathcal{R}(X) = \mathcal{BF}(\mathbb{Z}V_{X})/\mathcal{BF}(\mathcal{L}(X)).$$
\end{definition}
Note that by \cref{inclusions}, the abelian group $\mathcal{R}(X)$ is finite.  We conclude this subsection with the following theorem, which relates the cardinality of $\mathcal{R}(X)$ with the integer $m(X)$ introduced in \cref{the_special_value}.
\begin{theorem} \label{m_as_card}
Let $X$ be a strongly connected finite digraph for which $\delta(X) = 0$.  Then, one has
$$\# \mathcal{R}(X) = |m(X)|.$$
\end{theorem}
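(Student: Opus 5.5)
We compute $\#\mathcal{R}(X)$ as a quotient of indices inside the lattice $\mathcal{L}(X)$ and compare with $g_X^{*}(1)$ via \cref{zeta_explicit}. Set $s = {\rm rank}_{\mathbb{Z}}{\rm BF}(X)$; since $\delta(X)=0$ we have $s = r = r_X$. Write $\mathcal{L} = \mathcal{L}(X)$, a free $\mathbb{Z}$-module of rank $n-r$.

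\emph{Step 1: $\mathcal{L}$ is saturated.} By construction $\mathcal{L}$ is saturated in $\mathbb{Z}V_X$, so $\mathbb{Z}V_X/\mathcal{L}$ is torsion free, of rank $r$. Since $\mathcal{BF}(\mathbb{Z}V_X)\subseteq \mathcal{L}$ by \cref{inclusions}, we get an exact sequence
$$0 \to \mathcal{L}/\mathcal{BF}(\mathbb{Z}V_X) \to {\rm BF}(X) \to \mathbb{Z}V_X/\mathcal{L} \to 0.$$
The left term is finite (equal ranks in \cref{inclusions}) and the right term is free. Hence
$$\mathcal{L}/\mathcal{BF}(\mathbb{Z}V_X) = {\rm BF}(X)_{\rm tors}.$$

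\emph{Step 2: index of $\mathcal{BF}(\mathcal{L})$.} The restriction $\mathcal{BF}|_{\mathcal{L}}:\mathcal{L}\to\mathcal{L}$ is injective, being the restriction of the automorphism $\mathcal{BF}_{\mathbb{Q}}|_W$. Therefore
$$[\mathcal{L}:\mathcal{BF}(\mathcal{L})] = |\det(\mathcal{BF}|_{\mathcal{L}})| = |\det(\mathcal{BF}_{\mathbb{Q}}|_W)|.$$
The eigenvalues of $\mathcal{BF}_{\mathbb{Q}}$ on $W$ (after extending scalars to an algebraically closed field) are exactly the $1-\lambda_i$ for $i>r$ in the notation of \cref{zeta_explicit}. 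This holds because $\ker(\mathcal{BF}_{\mathbb{Q}}) = N$ is the full generalized eigenspace of $\mathcal{A}$ for the eigenvalue $1$, which is where (\ref{simple_fitting_dec}) uses $\delta(X)=0$. So by \cref{zeta_explicit},
$$[\mathcal{L}:\mathcal{BF}(\mathcal{L})] = \Bigl|\prod_{i>r}(1-\lambda_i)\Bigr| = |g_X^{*}(1)|.$$

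\emph{Step 3: conclude.} The chain $\mathcal{BF}(\mathcal{L})\subseteq \mathcal{BF}(\mathbb{Z}V_X)\subseteq \mathcal{L}$ gives
$$|g_X^{*}(1)| = \#\mathcal{R}(X)\cdot \#{\rm BF}(X)_{\rm tors}.$$
Comparing with \cref{the_special_value}, where $g_X^{*}(1) = m(X)\,\#{\rm BF}(X)_{\rm tors}$ and the torsion order is nonzero, yields $\#\mathcal{R}(X) = |m(X)|$.

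The only delicate point is Step 2: identifying $\det(\mathcal{BF}|_W)$ with the product $\prod_{i>r}(1-\lambda_i)$. This requires that $W$ carry exactly the non-$1$ eigenvalues of $\mathcal{A}$ with their full algebraic multiplicities. That is precisely what the Fitting decomposition provides, and it becomes the simple form (\ref{simple_fitting_dec}) under $\delta(X)=0$.
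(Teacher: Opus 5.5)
Your proof is correct and follows essentially the same route as the paper. Both arguments compute $[\mathcal{L}(X):\mathcal{BF}(\mathcal{L}(X))]=|g_X^{*}(1)|$ from the determinant of $\mathcal{BF}_{\mathbb{Q}}|_W$ and \cref{zeta_explicit}, identify $\mathcal{L}(X)/\mathcal{BF}(\mathbb{Z}V_X)$ with ${\rm BF}(X)_{\rm tors}$ using saturation of $\mathcal{L}(X)$, and conclude by multiplicativity of indices along $\mathcal{BF}(\mathcal{L}(X))\subseteq\mathcal{BF}(\mathbb{Z}V_X)\subseteq\mathcal{L}(X)$. One small correction: the eigenvalue description of $\mathcal{BF}_{\mathbb{Q}}|_W$ holds for the Fitting component whatever $\delta(X)$ is, and the hypothesis $\delta(X)=0$ is actually needed for the inclusion $\mathcal{BF}(\mathbb{Z}V_X)\subseteq\mathcal{L}(X)$ of \cref{inclusions}.
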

\begin{proof}
Since $\mathcal{BF}$ is injective on $\mathcal{L}(X)$, one has that $[\mathcal{L}(X):\mathcal{BF}(\mathcal{L}(X))]$ is the determinant of $\mathcal{BF}_{\mathbb{Q}}|_{W}$, which is precisely the product of the nonzero eigenvalues of $\mathcal{BF}_{\mathbb{Q}}:\mathbb{Q}V_{X} \rightarrow \mathbb{Q}V_{X}$.  Combining with \cref{zeta_explicit}, one gets
\begin{equation} \label{spe_index}
|g_{X}^{*}(1)| = [\mathcal{L}(X):\mathcal{BF}(\mathcal{L}(X))].
\end{equation}
Now, note that by construction, $\mathcal{L}(X)$ is saturated in $\mathbb{Z}V_{X}$ (meaning that $\mathbb{Z}V_{X}/\mathcal{L}(X)$ has no non-trivial $\mathbb{Z}$-torsion).  Moreover, the natural inclusion $\mathcal{L}(X) \hookrightarrow \mathbb{Z}V_{X}$ induces an injective group morphism
$$\mathcal{L}(X)/\mathcal{BF}(\mathbb{Z}V_{X}) \hookrightarrow {\rm BF}(X),$$
and \cref{inclusions} shows that one has in fact an injective group morphism
\begin{equation} \label{iso_nice}
\mathcal{L}(X)/\mathcal{BF}(\mathbb{Z}V_{X}) \hookrightarrow {\rm BF}(X)_{\rm tors}.
\end{equation}
The fact that $\mathcal{L}(X)$ is saturated in $\mathbb{Z}V_{X}$ shows that (\ref{iso_nice}) is an isomorphism of groups, and thus
\begin{equation} \label{sec_eq}
[\mathcal{L}(X):\mathcal{BF}(\mathbb{Z}V_{X})] = \# {\rm BF}(X)_{\rm tors}.
\end{equation}
Putting \cref{inclusions}, \cref{the_special_value}, (\ref{spe_index}), and (\ref{sec_eq}) together gives the desired result.
\end{proof}

%\subsection{Covering of digraphs}
\subsection{Covering of digraphs}
We begin this subsection with the following definition.
\begin{definition} \label{def_cov_map}
A morphism $f:Y \rightarrow X$ of digraphs is called a cover (or a covering map) if the following two conditions are satisfied:
\begin{enumerate}
\item The function $f:V_{Y} \rightarrow V_{X}$ is surjective,
\item The morphism $f:Y \rightarrow X$ is locally bijective, meaning that for every $w \in V_{Y}$, the induced functions
$$f|_{E_{Y,w}^{o}} :E_{Y,w}^{o}\rightarrow E_{X,f(w)}^{o} \text{ and } f|_{E_{Y,w}^{t}} :E_{Y,w}^{t}\rightarrow E_{X,f(w)}^{t}$$
are bijections.
\end{enumerate}
\end{definition}
Often, we refer to a cover $f:Y \rightarrow X$ simply as $Y/X$ without explicitly naming the covering map $f$.  If $f:Y \rightarrow X$ is a cover of strongly connected finite digraphs, the Bowen--Franks groups of $X$ and $Y$ are related to one another via the following lemma.

\begin{lemma} \label{adj_comm}
Let $f:Y \rightarrow X$ be a cover of strongly connected finite digraphs, and consider the natural induced surjective group morphism $\mathbb{Z}V_{Y} \rightarrow \mathbb{Z}V_{X}$, which we denote by the same symbol.  One has a commutative diagram
\begin{equation} \label{com_adj}
\begin{tikzcd}
\mathbb{Z}V_{Y} \arrow[r, "\mathcal{A}_{Y}"] \arrow[d,"f" ']  & \arrow[d,"f"] \mathbb{Z}V_{Y} \\
\mathbb{Z}V_{X} \arrow[r,"\mathcal{A}_{X}"] &  \mathbb{Z}V_{X}
\end{tikzcd}
\end{equation}
from which it follows that $f$ induces a natural surjective group morphism
\begin{equation} \label{tors_gr_morphism}
{\rm BF}(Y) \rightarrow {\rm BF}(X).
\end{equation}
\end{lemma}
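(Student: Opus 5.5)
The plan is to verify the commutativity of (\ref{com_adj}) directly on the basis $V_{Y}$, and then pass to cokernels. Fix $w \in V_{Y}$. By definition,
$$f(\mathcal{A}_{Y}(w)) = \sum_{\varepsilon \in E_{Y,w}^{o}} f(t(\varepsilon)) = \sum_{\varepsilon \in E_{Y,w}^{o}} t(f(\varepsilon)),$$
where the second equality uses that $f$ is a morphism of digraphs, so $f_{V}(t(\varepsilon)) = t(f_{E}(\varepsilon))$. Condition (2) of \cref{def_cov_map} says that $f|_{E^{o}_{Y,w}}:E^{o}_{Y,w} \rightarrow E^{o}_{X,f(w)}$ is a bijection. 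Reindexing the sum by $e = f(\varepsilon)$ therefore gives
$$\sum_{e \in E^{o}_{X,f(w)}} t(e) = \mathcal{A}_{X}(f(w)).$$
Both composites are group morphisms that agree on a basis, so the square commutes. Only the bijectivity on outgoing edges is needed here; the incoming-edge condition plays no role.

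Next, since $f$ also commutes with the identity operators, we get $f \circ \mathcal{BF}_{Y} = \mathcal{BF}_{X} \circ f$. Hence $f(\mathcal{BF}_{Y}(\mathbb{Z}V_{Y})) \subseteq \mathcal{BF}_{X}(\mathbb{Z}V_{X})$, and $f$ descends to a well-defined group morphism
$${\rm BF}(Y) = \mathbb{Z}V_{Y}/\mathcal{BF}_{Y}(\mathbb{Z}V_{Y}) \rightarrow \mathbb{Z}V_{X}/\mathcal{BF}_{X}(\mathbb{Z}V_{X}) = {\rm BF}(X).$$

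For surjectivity, condition (1) of \cref{def_cov_map} says that $f_{V}$ is surjective. Thus the induced map $\mathbb{Z}V_{Y} \rightarrow \mathbb{Z}V_{X}$ is surjective, which justifies the claim in the statement. Composing with the quotient map $\mathbb{Z}V_{X} \twoheadrightarrow {\rm BF}(X)$ gives a surjection that factors through ${\rm BF}(Y)$, so the induced map (\ref{tors_gr_morphism}) is surjective.

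There is no genuine obstacle in this argument. The only point requiring care is the reindexing step, which is exactly where local bijectivity is used.
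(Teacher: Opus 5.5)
Your proposal is correct and follows essentially the same approach as the paper. Like the paper, you check commutativity on each vertex $w \in V_{Y}$ by reindexing the sum over $E^{o}_{Y,w}$ through the local bijection onto $E^{o}_{X,f(w)}$. You also spell out the descent to cokernels and the surjectivity coming from surjectivity of $f_{V}$, which the paper leaves implicit in its remark that ``it suffices to show the commutativity of the diagram.''
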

\begin{proof}
It suffices to show the commutativity of the diagram.  Given $w \in V_{Y}$, one has
$$f\left(\mathcal{A}_{Y}(w) \right) = f \left(\sum_{\varepsilon \in E_{Y,w}^{o}} t(\varepsilon) \right) = \sum_{\varepsilon \in E_{Y,w}^{o}}t(f(\varepsilon)) = \sum_{e \in E_{X,f(w)}^{o}}t(e) = \mathcal{A}_{X}(f(w)),$$
because of the local bijectivity condition of \cref{def_cov_map}.
\end{proof}

The following theorem describes how certain invariants introduced in the previous section behave with respect to divisibility under coverings.
\begin{theorem} \label{divi_pro}
Let $Y/X$ be a cover of strongly connected finite digraphs, then one has 
$$g^*_X(1)\mid g^*_Y(1).$$ 
If we assume, in addition, that $\delta(X) = \delta(Y) = 0$, then 
$$m(X) \mid m(Y).$$ 
Furthermore, if $r_{X} = r_{Y}$ (still assuming $\delta(X) = \delta(Y) =0$), then the induced map ${\rm BF}(Y)_{\rm tors} \rightarrow {\rm BF}(X)_{\rm tors}$ from (\ref{tors_gr_morphism}) above is surjective, and one has 
$$\#{\rm BF}(X)_{\rm tors} \mid \#{\rm BF}(Y)_{\rm tors}.$$
\end{theorem}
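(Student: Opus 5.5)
The plan is to extend the commutative square of \cref{adj_comm} to the rational spaces and compare generalized eigenspaces. Set $F$ to be an algebraically closed field of characteristic zero. The surjection $f:FV_{Y}\to FV_{X}$ intertwines $\mathcal{A}_{Y}$ and $\mathcal{A}_{X}$. We also want a section. Define $f^{*}:\mathbb{Z}V_{X}\to\mathbb{Z}V_{Y}$ by $v\mapsto\sum_{w\in f^{-1}(v)}w$.

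We first check that $f^{*}$ also intertwines the adjacency operators. This uses local bijectivity on \emph{incoming} edges. One has $\mathcal{A}_{Y}(f^{*}v)=\sum_{w\mapsto v}\sum_{\varepsilon\in E^{o}_{Y,w}}t(\varepsilon)$. For each $w'$ lying over $v'$, the edges $\varepsilon$ with $t(\varepsilon)=w'$ and $f(o(\varepsilon))=v$ correspond bijectively to the edges $e\in E^{t}_{X,v'}$ with $o(e)=v$. Hence $\mathcal{A}_{Y}f^{*}=f^{*}\mathcal{A}_{X}$.

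Since $f$ is surjective on vertices and $f^{*}$ is injective, $FV_{X}$ embeds $\mathcal{A}$-equivariantly into $FV_{Y}$. The fibres all have the same cardinality $d$ when $X$ is connected, so $f\circ f^{*}=d$. Thus $FV_{Y}=f^{*}(FV_{X})\oplus\ker f$ as $\mathcal{A}_{Y}$-stable subspaces. Consequently $g_{Y}(u)=g_{X}(u)\cdot h(u)$, where $h(u)=\det(\mathcal{I}-\mathcal{A}_{Y}u\mid\ker f_{\mathbb{Q}})$. Since $\ker f$ is defined over $\mathbb{Z}$ and saturated, $h$ has integer coefficients. Taking leading Taylor coefficients at $u=1$ gives $g^{*}_{Y}(1)=g^{*}_{X}(1)h^{*}(1)$ with $h^{*}(1)\in\mathbb{Z}$, which proves the first divisibility.

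For $m(X)\mid m(Y)$, I will use \cref{m_as_card} and try to show that $\mathcal{R}(Y)$ surjects onto $\mathcal{R}(X)$. The map $f$ sends $\mathrm{Im}(\mathcal{BF}_{Y,\mathbb{Q}})$ onto $\mathrm{Im}(\mathcal{BF}_{X,\mathbb{Q}})$, so $f(\mathcal{L}(Y))\subseteq\mathcal{L}(X)$. Hence $f$ induces a map $\mathcal{R}(Y)\to\mathcal{R}(X)$, and it is surjective because $f(\mathcal{BF}_{Y}(\mathbb{Z}V_{Y}))=\mathcal{BF}_{X}(\mathbb{Z}V_{X})$. This requires well-definedness, i.e.\ $f(\mathcal{BF}_{Y}\mathcal{L}(Y))\subseteq\mathcal{BF}_{X}\mathcal{L}(X)$, which follows from $f(\mathcal{L}(Y))\subseteq\mathcal{L}(X)$. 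So $\#\mathcal{R}(X)$ divides $\#\mathcal{R}(Y)$, and the divisibility follows.

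Alternatively, one can write $m=g^{*}(1)/\#\mathrm{BF}_{\rm tors}$ and combine the result with the third claim.

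For the last claim, assume $r_{X}=r_{Y}$ and $\delta(X)=\delta(Y)=0$. Then $\dim\ker\mathcal{BF}_{Y,\mathbb{Q}}=\dim\ker\mathcal{BF}_{X,\mathbb{Q}}$. Since $f^{*}$ maps $\ker\mathcal{BF}_{X}$ injectively into $\ker\mathcal{BF}_{Y}$, the two kernels coincide after applying $f^{*}$. Applying $f$ and using $f f^{*}=d$, the map $f$ is an isomorphism $\ker\mathcal{BF}_{Y,\mathbb{Q}}\to\ker\mathcal{BF}_{X,\mathbb{Q}}$. It follows that $\ker f_{\mathbb{Q}}\subseteq W_{Y}$, and that $f(W_{Y})=W_{X}$.

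Now take $x\in\mathcal{L}(X)$ and lift it to some $y\in\mathbb{Z}V_{Y}$. Write $y=n+w$ with $n\in\ker\mathcal{BF}_{Y,\mathbb{Q}}$ and $w\in W_{Y}$. Then $f(n)\in\ker\mathcal{BF}_{X,\mathbb{Q}}\cap W_{X}=0$, so $n\in\ker f_{\mathbb{Q}}\subseteq W_{Y}$. Therefore $y\in W_{Y}\cap\mathbb{Z}V_{Y}=\mathcal{L}(Y)$, and so $f:\mathcal{L}(Y)\to\mathcal{L}(X)$ is surjective. By the isomorphism $\mathcal{L}/\mathcal{BF}(\mathbb{Z}V)\cong\mathrm{BF}_{\rm tors}$ from the proof of \cref{m_as_card}, this gives surjectivity of $\mathrm{BF}(Y)_{\rm tors}\to\mathrm{BF}(X)_{\rm tors}$, and hence the divisibility of orders.

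The main obstacle is the bookkeeping in this last step. The delicate point is that the integral lift $y$ lands in $\mathcal{L}(Y)$, which hinges on $\ker f_{\mathbb{Q}}\subseteq W_{Y}$. That in turn requires the dimension equality, and I would verify it carefully via $f^{*}$.
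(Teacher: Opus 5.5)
Your argument is correct. For the first two claims it is essentially the paper's proof. The paper also factors $g_Y=g_X\cdot h$ using the $\mathcal{A}_Y$-stable saturated sublattice $\ker f$, via a block upper-triangular matrix over $\mathbb{Z}$, so the transfer $f^*$ and the rational splitting are not needed there. It also proves $m(X)\mid m(Y)$ through the same surjection $\mathcal{R}(Y)\twoheadrightarrow\mathcal{R}(X)$, and your check that $f(\mathcal{L}(Y))\subseteq\mathcal{L}(X)$ supplies the well-definedness the paper leaves implicit.

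The third claim is where you genuinely diverge. The paper stays at the level of abelian groups. Since $\delta=0$ and $r_X=r_Y$, one has ${\rm rank}_{\mathbb{Z}}{\rm BF}(Y)={\rm rank}_{\mathbb{Z}}{\rm BF}(X)$, so the surjection ${\rm BF}(Y)\to{\rm BF}(X)$ has finite kernel and every lift of a torsion element is torsion. You instead prove the stronger lattice statement $f(\mathcal{L}(Y))=\mathcal{L}(X)$ and pass through $\mathcal{L}/\mathcal{BF}(\mathbb{Z}V)\cong{\rm BF}_{\rm tors}$. This costs more, but it shows exactly where surjectivity comes from, namely $\ker f_{\mathbb{Q}}\subseteq W_Y$.

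That inclusion deserves one explicit line. The subspace $\ker f_{\mathbb{Q}}$ is $\mathcal{BF}_Y$-stable and meets $\ker\mathcal{BF}_{Y,\mathbb{Q}}$ trivially, so $\mathcal{BF}_Y$ is bijective on it, giving $\ker f_{\mathbb{Q}}\subseteq\mathrm{Im}(\mathcal{BF}_{Y,\mathbb{Q}})=W_Y$. It also follows directly from your factorization without $f^*$: $r_X=r_Y$ means $h(1)\neq 0$.

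Finally, drop the ``alternatively'' sentence. From $g_X^*(1)\mid g_Y^*(1)$ and $\#{\rm BF}(X)_{\rm tors}\mid\#{\rm BF}(Y)_{\rm tors}$ one cannot deduce $m(X)\mid m(Y)$. The arithmetic implication already fails for $2\mid 4$ and $1\mid 4$, which give quotients $2$ and $1$. That route would in any case need $r_X=r_Y$, which the second claim does not assume.
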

\begin{proof}
Let $f\colon\mathbb{Z} V_Y\to \mathbb{Z} V_X$ be the natural map. As $\Z V_X$ is a free $\Z$-module, $\ker(f)$ is saturated. Fix a decomposition $\mathbb{Z} V_Y=\ker(f)\oplus V'$ of $\mathbb{Z}$-modules. From \cref{adj_comm}, one deduces that the matrix of the adjacency operator $\mathcal{A}_Y$ with respect to this decomposition is of the form
\[\begin{pmatrix}
    A&B\\0&C
\end{pmatrix},\] where $A\in \textup{Mat}_{d\times d}(\mathbb{Z})$ with $d=\mathrm{rank}(\ker(f))$. Note that $\mathbb{Z} V_X\cong \mathbb{Z} V_Y/\ker(f)$. In particular, $\mathcal{A}_X$ acts on $\mathbb{Z} V_X$ via $C$. Let $\mu_1,\dots, \mu_d$ be the eigenvalues of $A$ (over some large enough extension of $\mathbb{Q}$). Note that the complex numbers $\mu_i$ are algebraic integers due to the fact that $A\in \textup{Mat}_{d\times d}(\Z)$. In particular, $\prod_{\mu_i\neq 1}\vert 1-\mu_i\vert \in \Z$.  We denote by $\lambda_i(X)$ and $\lambda_i(Y)$ the eigenvalues of $\mathcal{A}_Y$ and $\mathcal{A}_X$, respectively, with the same convention as in the statement of \cref{zeta_explicit}. The aforementioned lemma then implies
\[\vert g^*_Y(1)\vert =\prod_{i=r_Y+1}^{\# V_Y}\vert 1-\lambda_i(Y)\vert=\prod_{\mu_i\neq 1}\vert 1-\mu_i\vert \prod_{i=r_X+1}^{\# V_X}(1-\lambda_i(X))\vert =\prod_{\mu_i\neq 1}\vert 1-\mu_i\vert \vert g^*_X(1)\vert \]
proving the first assertion.

 Due to \cref{adj_comm}, the surjective group morphism $f:\mathbb{Z}V_{Y} \rightarrow \mathbb{Z}V_{X}$ induces a surjective group morphism $\mathcal{BF}_{Y}(\mathbb{Z}V_{Y}) \rightarrow \mathcal{BF}_{X}(\mathbb{Z}V_{X})$ which, in turn, induces yet another surjective group morphism
$$\mathcal{R}(Y) \twoheadrightarrow \mathcal{R}(X). $$
\cref{m_as_card} implies the divisibility $m(X) \mid m(Y)$, which proves the second assertion.

 Assuming that $r_{X} = r_{Y}$, the surjective group morphism ${\rm BF}(Y) \rightarrow {\rm BF}(X)$ from \cref{adj_comm} induces a surjective group morphism
$${\rm BF}(Y)_{\rm tors} \rightarrow {\rm BF}(X)_{\rm tors}$$
from which the final assertion follows.
\end{proof}

As an example, consider the $2$-to-$1$ cover $f:Y \rightarrow X$ given by

\begin{equation*}
\begin{tikzpicture} [baseline={([yshift=-0.7ex] current bounding box.center)}]
%vertices
\draw[fill=black] (0,0) circle (1pt);
\draw[fill=black] (1,0) circle (1pt);

%edges
\draw (0,0) edge [decoration={markings, mark= at position 0.44 with {\arrow[xscale=-1]{stealth}}},preaction={decorate},bend left=20] (1,0);
\draw (0,0) edge [decoration={markings, mark= at position 0.58 with {\arrow{stealth}}},preaction={decorate}, bend right=20] (1,0);
\draw (0,0) edge [decoration={markings, mark= at position 0.38 with {\arrow{stealth}}},preaction = {decorate}, loop left, in = 155, out = 205,min distance=8mm] (0,0);
\draw (1,0) edge [decoration={markings, mark= at position 0.38 with {\arrow{stealth}}},preaction = {decorate}, loop right, in = 25, out = 335,min distance=8mm] (1,0);
\draw (0,0) edge [decoration={markings, mark= at position 0.60 with {\arrow{stealth}}},preaction = {decorate}, loop left, in = 145, out = 215,min distance=12mm] (0,0);
\draw (1,0) edge [decoration={markings, mark= at position 0.60 with {\arrow{stealth}}},preaction = {decorate}, loop right, in = 35, out = 325,min distance=12mm] (1,0);
\end{tikzpicture} 
\rightarrow
\begin{tikzpicture} [baseline={([yshift=-0.7ex] current bounding box.center)}]
%vertices
\draw[fill=black] (0,0) circle (1pt);

%edges
\draw (0,0) edge [decoration={markings, mark= at position 0.38 with {\arrow{stealth}}},preaction = {decorate}, loop left, in = 155, out = 205,min distance=8mm] (0,0);
\draw (0,0) edge [decoration={markings, mark= at position 0.60 with {\arrow{stealth}}},preaction = {decorate}, loop left, in = 145, out = 215,min distance=12mm] (0,0);
\draw (0,0) edge [decoration={markings, mark= at position 0.38 with {\arrow{stealth}}},preaction = {decorate}, loop right, in = 25, out = 335,min distance=8mm] (0,0);

\end{tikzpicture} 
\end{equation*}
The adjacency matrices are 
$$\begin{pmatrix} 2& 1 \\ 1 & 2 \end{pmatrix} \text{ and } \begin{pmatrix}3 \end{pmatrix},$$
respectively.  Thus, the zeta functions are given by
$$g_{Y}(u) = 1 - 4u + 3u^{2} \text{ and } g_{X}(u) = 1 - 3u, $$
and one has $r_{Y} = 1 \neq 0 = r_{X}$.  On the other hand, a simple calculation shows that $\delta(X) = \delta(Y) = 0$ and $\# {\rm BF}(Y)_{\rm tors} =1$ and $\# {\rm BF}(X)_{\rm tors} = 2$, from which one obtains $m(Y) = 2$ and $m(X) = -1$.  This example shows that if $r_{X} \neq r_{Y}$, then one does not always have the divisibility $\# {\rm BF}(X)_{\rm tors} \mid \# {\rm BF}(Y)_{\rm tors}$.

%\section{Equivariant digraph theory}
\section{Equivariant digraph theory} \label{section:equivariant}

Our approach in this paper relies on Galois theory for digraphs.  If the reader is not familiar with the Galois theory for digraphs, we present a possible approach in \cref{gr_acting_on_digraph}, which is a straightforward adaptation of the usual Galois theory for undirected graphs, as presented in \cite[Sections 5.1 and 5.2]{Sunada:2013} for instance.  

There are two ways of constructing Galois covers of digraphs.  The first way is a top-down approach and is explained in \cref{one_way_corr}.  The second way is a bottom-up approach using the theory of voltage assignments introduced in \cite{Gross:1974}.  This approach is explained in \cref{concrete_construction}, which we will use in \cref{a_special_cover} to construct our digraph of interest.

%\subsection{Groups acting on digraphs}
\subsection{Groups acting on digraphs} \label{groups_acting}
If $f:Y\rightarrow X$ is a covering map of digraphs, we let
\begin{equation} \label{deck_group_def}
{\rm Aut}_{f}(Y/X) = \{\sigma \in {\rm Aut}(Y) : f \circ \sigma = f \}
\end{equation}
denote the group of deck transformations of the cover $f:Y \rightarrow X$; it is a subgroup of ${\rm Aut}(Y)$.  We will often write ${\rm Aut}(Y/X)$ instead of ${\rm Aut}_{f}(Y/X)$.  Recall that a group $G$ acts on a digraph $Y$ if we are given a group morphism
$$G \rightarrow {\rm Aut}(Y). $$
The group of deck transformations ${\rm Aut}(Y/X)$ of a cover of digraphs $Y/X$ acts on $Y$ in a particular way assuming that $Y$ is strongly connected; the key observation is that the induced action on $V_{Y}$ is free.  
\begin{proposition} \label{act_special}
If $Y/X$ is a cover of strongly connected digraphs, then the action of ${\rm Aut}(Y/X)$ on $V_{Y}$ is free.
\end{proposition}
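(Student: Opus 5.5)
Suppose $\sigma \in {\rm Aut}(Y/X)$ fixes some vertex $w_{0} \in V_{Y}$. The plan is to show that $\sigma$ then fixes every vertex and every edge of $Y$, so that $\sigma$ is the identity. This is exactly the usual argument that deck transformations of a connected covering are determined by where they send a single point. Propagating along directed paths suffices here precisely because $Y$ is strongly connected.

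\textbf{Step 1: fixing a vertex fixes its outgoing edges.} Let $w \in V_{Y}$ with $\sigma(w) = w$. Since $\sigma$ is an automorphism, it restricts to a bijection $E_{Y,w}^{o} \rightarrow E_{Y,\sigma(w)}^{o} = E_{Y,w}^{o}$. Take $\varepsilon \in E_{Y,w}^{o}$. The relation $f \circ \sigma = f$ gives $f(\sigma(\varepsilon)) = f(\varepsilon)$, and both $\varepsilon$ and $\sigma(\varepsilon)$ lie in $E_{Y,w}^{o}$. By the local bijectivity in \cref{def_cov_map}, the restriction $f|_{E_{Y,w}^{o}}$ is injective, so $\sigma(\varepsilon) = \varepsilon$. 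In particular $\sigma(t(\varepsilon)) = t(\sigma(\varepsilon)) = t(\varepsilon)$, so the terminal vertex of every outgoing edge at $w$ is also fixed.

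\textbf{Step 2: propagate along paths.} Let $w \in V_{Y}$ be arbitrary. Strong connectivity gives a path $c = \varepsilon_{1} \cdots \varepsilon_{m}$ with $o(c) = w_{0}$ and $t(c) = w$; the case $w = w_{0}$ is trivial. Applying Step 1 inductively along $c$ shows that each $\varepsilon_{i}$ and each vertex $t(\varepsilon_{i})$ is fixed. Hence $\sigma(w) = w$, so $\sigma$ fixes all of $V_{Y}$. By Step 1 again it then fixes every edge, since each edge has an origin vertex. Thus $\sigma = {\rm id}_{Y}$, and the stabilizer of every vertex is trivial, which is the claimed freeness.

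There is no real obstacle. The only point needing care is to invoke injectivity of $f$ on $E_{Y,w}^{o}$ at a vertex $w$ that is already known to be fixed, rather than for arbitrary edges. The argument uses only the outgoing half of local bijectivity, and it uses strong connectivity only to reach every vertex from $w_{0}$ by a directed path. Local finiteness or finiteness of $Y$ is not needed.
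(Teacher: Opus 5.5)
Your proof is correct and is essentially the paper's argument. The paper deduces the statement from the unique lifting property \cref{unique_lifting_prop} applied to $q=\sigma$ and $s=\mathrm{id}_Y$, and your edge-by-edge propagation along directed paths from $w_0$ is exactly the proof of that lemma specialized to this case.
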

\begin{proof}
This follows directly from \cref{unique_lifting_prop}.  
\end{proof}

If a group $G$ acts on a digraph, then it induces two $G$-sets, for both $V_{Y}$ and $E_{Y}$ are acted upon.  One obtains a quotient digraph $G \backslash Y = (G \backslash V_{Y}, G \backslash E_{Y})$ whose incidence map is induced from the one on $Y$.  Moreover, the natural map $Y \rightarrow G \backslash Y$ is a morphism of digraphs.  From now on, we will write $Y_{G}$ rather than $G \backslash Y$.

\begin{proposition} \label{quotient_const}
If $G$ acts on a digraph $Y$, and the action of $G$ on $V_{Y}$ is free, then the natural morphism $Y \rightarrow Y_{G}$ is a covering map.  
\end{proposition}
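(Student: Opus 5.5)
We verify the two conditions of \cref{def_cov_map} for the quotient map $\pi:Y \rightarrow Y_{G}$, sending $w \mapsto Gw$ on vertices and $\varepsilon \mapsto G\varepsilon$ on edges. First we check that $\pi$ is a morphism of digraphs. The incidence map on $Y_{G}$ is $G\varepsilon \mapsto (Go(\varepsilon), Gt(\varepsilon))$. It is well defined because each $\sigma \in G$ acts by digraph automorphisms, so $o(\sigma\varepsilon) = \sigma o(\varepsilon)$ and $t(\sigma\varepsilon) = \sigma t(\varepsilon)$. Surjectivity of $\pi$ on vertices is immediate, since every orbit is the image of any of its members.

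The main step is local bijectivity. Fix $w \in V_{Y}$; we treat $\pi|_{E^{o}_{Y,w}} : E^{o}_{Y,w} \rightarrow E^{o}_{Y_{G},Gw}$, and the terminal case is identical with $t$ in place of $o$.

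\emph{Surjectivity.} Take an edge $G\varepsilon$ of $Y_{G}$ with origin $Gw$. Then $Go(\varepsilon) = Gw$, so $o(\varepsilon) = \sigma w$ for some $\sigma \in G$. The edge $\sigma^{-1}\varepsilon$ then has origin $w$ and the same orbit as $\varepsilon$, so it is a preimage of $G\varepsilon$ in $E^{o}_{Y,w}$.

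\emph{Injectivity.} This is where freeness enters. Suppose $\varepsilon, \varepsilon' \in E^{o}_{Y,w}$ lie in the same orbit, say $\varepsilon' = \sigma\varepsilon$. Taking origins gives $w = o(\varepsilon') = \sigma o(\varepsilon) = \sigma w$. Since the action on $V_{Y}$ is free, $\sigma = 1$, and therefore $\varepsilon' = \varepsilon$.

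No real obstacle is expected. The only point needing care is that freeness is assumed only on vertices; this suffices, because an element of $G$ fixing an edge fixes its origin and is therefore trivial. Applying the same argument with terminal vertices completes the proof that $\pi$ is a covering map.
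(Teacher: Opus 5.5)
Your proof is correct and matches what the paper has in mind. The paper leaves this as routine and remarks only that freeness on $V_{Y}$ is what gives the injectivity part of local bijectivity, which is exactly where you use it; the remaining checks are the same straightforward verifications you carry out.
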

\begin{proof}
The proof is routine, and left to the reader. (The fact that the action of $G$ on $V_{Y}$ is assumed to be free is used to show the injectivity part of the local bijectivity condition in \cref{def_cov_map}.)
\end{proof}

In fact, if one assumes moreover that $Y$ in \cref{quotient_const} is strongly connected, then the cover $Y \rightarrow Y_{G}$ is a Galois cover as explained in \cref{one_way_corr}.

%\subsection{The Bowen--Franks group as a Galois module}
\subsection{The Bowen--Franks group as a Galois module}

Let $Y$ be a digraph and $G$ a group acting on $Y$ freely on $V_{Y}$.  Then $\mathbb{Z}V_{Y}$ is a left $\mathbb{Z}[G]$-module, and in fact it is a permutation module.  The freeness of the action implies that the left $\mathbb{Z}[G]$-module $\mathbb{Z}V_{Y}$ is free over $\mathbb{Z}[G]$ of finite rank $\# V_{X}$ provided $X = Y_{G}$ is finite.

Since any $\sigma \in G$ induces a bijection $E_{Y,w}^{o} \stackrel{\approx}{\longrightarrow} E_{Y,\sigma \cdot w}^{o}$ for all $w \in V_{Y}$, it follows that the adjacency operator $\mathcal{A}_{Y}:\mathbb{Z}V_{Y} \rightarrow \mathbb{Z}V_{Y}$ is a morphism of left $\mathbb{Z}[G]$-modules when $Y$ is assumed to be locally finite.  Indeed, one has
\begin{equation*}
\begin{aligned}
\mathcal{A}_{Y}(\sigma \cdot w) &= \sum_{\varepsilon \in E_{Y,\sigma \cdot w}^{o}}t(\varepsilon) \\ 
&= \sum_{\varepsilon \in E_{Y,w}^{o}}t(\sigma \cdot \varepsilon) \\
&= \sigma \cdot \mathcal{A}_{Y}(w),
\end{aligned}
\end{equation*}
for all $\sigma \in G$ and all $w \in V_{Y}$.
\begin{comment}
\begin{equation*}
\begin{aligned}
\mathcal{A}_{Y}(\sigma \cdot w) &= \sum_{\varepsilon \in E_{Y,\sigma \cdot w}^{t}}o(\varepsilon) \\ 
&= \sum_{\varepsilon \in E_{Y,w}^{t}}o(\sigma \cdot \varepsilon) \\
&= \sigma \cdot \mathcal{A}_{Y}(w),
\end{aligned}
\end{equation*}
for all $\sigma \in G$ and all $w \in V_{Y}$.
\end{comment}
Therefore, the Bowen--Franks operator $\mathcal{BF}_{Y}:\mathbb{Z}V_{Y} \rightarrow \mathbb{Z}V_{Y}$ is also a morphism of left $\mathbb{Z}[G]$-modules.  As a consequence, the Bowen--Franks group ${\rm BF}(Y)$ is a left $\mathbb{Z}[G]$-module, and the exact sequence
\begin{equation} \label{finite_free_presentation}
\mathbb{Z}V_{Y} \stackrel{\mathcal{BF}_{Y}}{\longrightarrow} \mathbb{Z}V_{Y} \rightarrow {\rm BF}(Y) \rightarrow 0 
\end{equation}
of left $\mathbb{Z}[G]$-modules gives a finite free presentation of ${\rm BF}(Y)$ over $\mathbb{Z}[G]$ when $Y$ is assumed to be finite.

%\subsection{The equivariant Artin--Mazur zeta function}
\subsection{The equivariant Artin--Mazur zeta function}
Throughout this subsection, we assume that $Y$ is a strongly connected finite digraph and that $G$ is a finite \emph{abelian} group. Our approach is an equivariant one heavily influenced by Bass's approach in \cite{Bass:1992}; the assumption on $G$ being abelian allows us to avoid the use of non-commutative determinants.  

 We assume that $G$ acts on $Y$ such that the action of $G$ on $V_{Y}$ is free. Set $X = Y_{G}$.  We consider the polynomial module $\mathbb{Z}V_{Y}[u]$ which is a $\mathbb{Z}[G][u]$-module, and thus we can view $\mathcal{I}_{Y} - \mathcal{A}_{Y}u$ as an operator on the $\mathbb{Z}[G][u]$-module $\mathbb{Z}V_{Y}[u]$.  Note that $\mathbb{Z}V_{Y}[u]$ is a free $\mathbb{Z}[G][u]$-module of finite rank $\# V_{X}$, where $X = Y_{G}$. Thus, the determinant of $\mathcal{I}_{Y} - \mathcal{A}_{Y}u$ over $\mathbb{Z}[G][u]$ is defined.

\begin{definition} \label{equivariant_def_zeta}
With the notation as above, we define
$$\gamma_{Y/X}(u) = {\rm det}_{\mathbb{Z}[G][u]}(\mathcal{I}_{Y} - \mathcal{A}_{Y}u) \in \mathbb{Z}[G][u]. $$
\end{definition}
Note that 
$$\gamma_{Y/X}(u) \in 1 + u \mathbb{Z}[G][u] \subseteq \mathbb{Z}[G]\llbracket u \rrbracket^{\times}, $$
and therefore $\gamma_{Y/X}(u)$ is invertible as a power series with coefficients in $\mathbb{Z}[G]$.  Its inverse 
$$\gamma_{Y/X}(u)^{-1} \in 1 + u \mathbb{Z}[G]\llbracket u \rrbracket $$
is the equivariant (Artin--Mazur) zeta function associated with the abelian Galois cover $Y/X$ of digraphs, but our emphasis in this article will be on $\gamma_{Y/X}(u)\in 1 + u \mathbb{Z}[G][u]$.  We will be particularly interested in the equivariant special value
\begin{equation} \label{eq_special_value}
\gamma_{Y/X}(1) \in \mathbb{Z}[G]. 
\end{equation}

%\subsection{Artin formalism}
\subsection{Artin formalism}
This section follows \cite{Gambheera/Vallieres:2025}, except that we work over an arbitrary algebraically closed field $F$ of characteristic zero.  As noted by the authors in the paragraph following \cite[(6.7)]{Gambheera/Vallieres:2025}, the arguments of \cite{Gambheera/Vallieres:2025} remain valid in this more general setting.

We let $G$ be a finite abelian group, $H$ a subgroup, and $\Gamma = G/H$.  Let $Y$ be a strongly connected finite digraph on which $G$ acts in such a way that the action is free on the vertices $V_{Y}$. 
\begin{remark}
The hypothesis that the action of $G$ is free on the vertices $V_{Y}$ can be dispensed with at the cost of losing the inflation property (\cref{equiv_inf} below) and the Galois correspondence (\cref{gal_cor}).  This question was investigated in detail for graphs in \cite{Gambheera/Vallieres:2025}.  In the context of the current article, we find it appropriate to maintain that hypothesis, since the inflation property will play an important role, for instance, in \cref{bf_and_cg}.
\end{remark}

Since the action of $G$ is free on $V_{Y}$, so is the action of $H$.  Throughout this subsection, we write $X = Y_{G}$ and $Z = Y_{H}$, with the natural covering maps
$$Y \rightarrow Z \rightarrow X,$$
so that $Z$ with these two covering maps is an intermediate cover of $Y/X$.  \cref{one_way_corr} shows that $G = {\rm Gal}(Y/X)$ and $H = {\rm Gal}(Y/Z)$.  Moreover, note that \cref{galois_top,last_gal_result} implies that both $Y/Z$ and $Z/X$ are Galois.  Furthermore, $\Gamma$ is naturally isomorphic to ${\rm Gal}(Z/X)$ by \cref{last_gal_result}.

We consider the function
\begin{equation} \label{need_for_ind}
N_{\Gamma}:F[G] \rightarrow F[H]
\end{equation}
of \cite[(4.8)]{Gambheera/Vallieres:2025}, which was denoted therein as $N_{G/H}$, and whose definition we now recall.  The module $F[G]$ is a free $F[H]$-module of finite rank, since
$$F[G] \simeq (F[H])^{\# \Gamma}.$$
  Given $\lambda \in F[G]$, let
$$ m_{\lambda}: F[G] \rightarrow F[G]$$
denote the multiplication by $\lambda$ map.  It is a morphism of $F[H]$-modules, and the map $N_{\Gamma}$ from (\ref{need_for_ind}) is defined via
$$N_{\Gamma}(\lambda) = {\rm det}_{F[H]}(m_{\lambda}).$$
Note that $N_{\Gamma}$ is multiplicative, i.e., if $\lambda_{1}, \lambda_{2} \in F[G]$, then $N_{\Gamma}(\lambda_{1} \lambda_{2}) = N_{\Gamma}(\lambda_{1}) N_{\Gamma}(\lambda_{2})$.  Similarly, since $F[G][u]$ is a free $F[H][u]$-module, one also has for any $P \in F[G][u]$ a multiplication by $P$ map
$$m_{P}:F[G][u] \rightarrow F[G][u],$$
inducing a multiplicative function
$$N_{\Gamma}:F[G][u] \rightarrow F[H][u],$$
which we denote by the same symbol.  

\begin{theorem}[Induction property]\label{equiv_ind} 
Let $Y$ be a strongly connected finite digragh on which a finite abelian group $G$ is acting in a way that the induced action of $G$ on $V_{Y}$ is free.  If $H$ is any subgroup of $G$, then one has
$$N_{\Gamma}(\gamma_{Y/X}(u)) = \gamma_{Y/Z}(u), $$
where $Z = Y_{H}$ and $X = Y_{G}$.
\end{theorem}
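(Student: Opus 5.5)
The identity is a statement about determinants of one operator over two coefficient rings. It follows from the transitivity of determinants under restriction of scalars. The plan is to express both sides as determinants of the single operator $\mathcal{I}_{Y} - \mathcal{A}_{Y}u$ on the module $FV_{Y}[u]$ (or $\mathbb{Z}V_{Y}[u]$), viewed over $F[G][u]$ and over $F[H][u]$. Since both sides have coefficients in $\mathbb{Z}$, I will work over $F$ (or even $\mathbb{Q}$) and note that $\mathbb{Z}[G][u]\subseteq F[G][u]$ injects, so nothing is lost.

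\textbf{Step 1: identify $\gamma_{Y/Z}(u)$ as an $F[H][u]$-determinant on the same module.} Since $H$ acts freely on $V_{Y}$ and $Z=Y_{H}$, the $\mathbb{Z}[H][u]$-module $\mathbb{Z}V_{Y}[u]$ is free of rank $\#V_{Z}=\#\Gamma\cdot\#V_{X}$. The operator used to define $\gamma_{Y/Z}(u)$ in \cref{equivariant_def_zeta} is the same operator $\mathcal{I}_{Y}-\mathcal{A}_{Y}u$ on $\mathbb{Z}V_{Y}[u]$, now regarded as $\mathbb{Z}[H][u]$-linear. Hence
$$\gamma_{Y/Z}(u)={\rm det}_{\mathbb{Z}[H][u]}(\mathcal{I}_{Y}-\mathcal{A}_{Y}u),$$
where $\mathbb{Z}V_{Y}[u]$ is viewed as an $H$-module by restriction.

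\textbf{Step 2: fix compatible bases.} Choose representatives $w_{1},\ldots,w_{n}\in V_{Y}$ of the $G$-orbits, where $n=\#V_{X}$, so that $\mathbb{Z}V_{Y}[u]\cong (\mathbb{Z}[G][u])^{n}$. Choose coset representatives $\tau_{1},\ldots,\tau_{k}$ of $H$ in $G$, where $k=\#\Gamma$. Then $\{\tau_{j}w_{i}\}$ is a $\mathbb{Z}[H][u]$-basis of $\mathbb{Z}V_{Y}[u]$. Let $M\in M_{n}(R)$, with $R=F[G][u]$, be the matrix of $\mathcal{I}_{Y}-\mathcal{A}_{Y}u$ in the basis $(w_{i})$. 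Its matrix over $S=F[H][u]$ in the basis $\{\tau_{j}w_{i}\}$ is the $nk\times nk$ block matrix obtained by replacing each entry $M_{ab}$ by the $k\times k$ matrix of $m_{M_{ab}}$ acting on $R\cong S^{k}$ in the basis $(\tau_{j})$.

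\textbf{Step 3 (the main point): the determinant identity.} I need
$${\rm det}_{S}(M \text{ viewed over } S)=N_{\Gamma}({\rm det}_{R}M)={\rm det}_{S}(m_{{\rm det}_{R}M}).$$
This is the standard transitivity ${\rm det}_{S}\circ(\text{restriction})={\rm det}_{S}\circ m_{{\rm det}_{R}}$ for a commutative $S$-algebra $R$ that is free of finite rank over $S$. I expect this to be the only step needing care.

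I would prove it as follows. The set of matrices in $M_{n}(R)$ for which the identity holds is closed under products, since both sides are multiplicative: the block matrix of $MM'$ is the product of the block matrices, and $N_{\Gamma}$ is multiplicative. The identity also holds for elementary matrices, where both sides equal $1$ because the block matrix is block unitriangular, and for diagonal matrices, where both sides equal $\prod_{i} N_{\Gamma}(d_{i})$. Over the field ${\rm Frac}$ the argument would then be immediate, but $R$ is not a field.

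To get around this, work generically. Since $F$ is algebraically closed of characteristic zero and $G$ is abelian, $F[G]\cong\prod_{\chi\in\widehat{G}(F)}F$ and $F[H]\cong\prod_{\psi\in\widehat{H}(F)}F$. Under these isomorphisms $R$ becomes a product of copies of $F[u]$ indexed by $\chi$, grouped over $S$ according to the restrictions $\chi|_{H}=\psi$. Multiplication by $\lambda$ is then diagonal, and the $\psi$-component of $N_{\Gamma}(\lambda)$ equals $\prod_{\chi|_{H}=\psi}\chi(\lambda)$. Likewise, the $\psi$-component of the left-hand side is the determinant of a block-diagonal matrix whose blocks are $\chi(M)$ for $\chi|_{H}=\psi$, which gives $\prod_{\chi|_{H}=\psi}\det\chi(M)=\prod\chi(\det_{R}M)$. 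The two sides agree componentwise, so the identity holds.

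This character-wise computation is the cleanest route, and I would use it directly instead of the elementary-matrix argument. Combining it with Steps 1 and 2 gives $N_{\Gamma}(\gamma_{Y/X}(u))=\gamma_{Y/Z}(u)$.
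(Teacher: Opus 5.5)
Your proof is correct. The paper's own proof is only a pointer to \cite[Theorem 4.12]{Gambheera/Vallieres:2025}. After your Step 1, the statement is exactly the transitivity (norm-compatibility) of determinants for the free extension $F[H][u]\subseteq F[G][u]$, namely ${\rm det}_{F[H][u]}(\phi)=N_{\Gamma}({\rm det}_{F[G][u]}(\phi))$ for $\phi=\mathcal{I}_{Y}-\mathcal{A}_{Y}u$. Your reduction to this identity is therefore the natural one.

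Where you differ is in how you establish that transitivity. You do not invoke the general lemma, which holds for any commutative ring $B$ free of finite rank over a subring $A$. Instead you verify it in this instance:
\begin{enumerate}
\item split $F[G][u]\cong\prod_{\chi}F[u]$ and $F[H][u]\cong\prod_{\psi}F[u]$;
\item use $e_{\psi}=\sum_{\chi|_{H}=\psi}e_{\chi}$ in $F[G]$;
\item read off the $\psi$-component of each side as $\prod_{\chi|_{H}=\psi}\chi({\rm det}\,M)$ from a block-diagonal matrix.
\end{enumerate}
This is sound. The facts you use implicitly are standard: a determinant over a finite product of rings is computed factor by factor, and on each factor $e_{\psi}FV_{Y}[u]$ it does not depend on the chosen $F[u]$-basis.

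Your argument is fully explicit but relies essentially on $F$ being algebraically closed of characteristic zero, so that $F[G]$ is split semisimple. The general lemma needs no such hypothesis and would give the identity directly in $\mathbb{Z}[H][u]\subseteq\mathbb{Z}[G][u]$. Since both sides already lie in $\mathbb{Z}[H][u]$, nothing is lost by your extension of scalars.

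Once you use the character computation, the coset-basis bookkeeping of Step 2 is no longer needed.
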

\begin{proof}
The proof is identical to the proof of \cite[Theorem 4.12]{Gambheera/Vallieres:2025}.
\end{proof}

Consider now the natural projection map $\pi:F[G] \rightarrow F[\Gamma]$, which is an $F$-algebra morphism.  It induces yet another morphism
\begin{equation} \label{pi_map}
\pi:F[G][u] \rightarrow F[\Gamma][u] 
\end{equation}
of $F[u]$-algebra, which we denote by the same symbol.  
\begin{theorem}[Inflation property] \label{equiv_inf}
Let $Y$ be a strongly connected finite digraph on which a finite abelian group $G$ is acting in a way that the induced action of $G$ on the vertices $V_{Y}$ is free.  If $H$ is any subgroup of $G$, then one has
$$\pi(\gamma_{Y/X}(u)) = \gamma_{Z/X}(u),$$
where $\pi:F[G][u] \rightarrow F[\Gamma][u]$ is the map (\ref{pi_map}) above, $Z = Y_{H}$, and $X = Y_{G}$.
\end{theorem}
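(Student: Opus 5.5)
The plan is to compute both sides as determinants over $F[\Gamma][u]$ and compare them using a convenient basis. Since $G$ acts freely on $V_{Y}$, choose a set of representatives $w_{1},\ldots,w_{n}$ of the $G$-orbits in $V_{Y}$, where $n=\#V_{X}$. These form a $\mathbb{Z}[G]$-basis of $\mathbb{Z}V_{Y}$. Their images $\bar w_{1},\ldots,\bar w_{n}$ in $V_{Z}$ are then representatives of the $\Gamma$-orbits in $V_{Z}$, and $\Gamma$ acts freely on $V_{Z}$ (\cref{last_gal_result}). Hence $\bar w_{1},\ldots,\bar w_{n}$ form an $F[\Gamma]$-basis of $FV_{Z}$.

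The first step is to write the matrix of $\mathcal{A}_{Y}$ in the basis $(w_{j})$. For each $j$ one has
$$\mathcal{A}_{Y}(w_{j})=\sum_{\varepsilon\in E^{o}_{Y,w_{j}}}t(\varepsilon)=\sum_{i}a_{ij}\,w_{i},\qquad a_{ij}=\sum_{\varepsilon\in E^{o}_{Y,w_{j}},\ t(\varepsilon)\in G w_{i}}\sigma_{\varepsilon},$$
where $\sigma_{\varepsilon}\in G$ is the unique element with $t(\varepsilon)=\sigma_{\varepsilon}w_{i}$; it is unique because the action is free. This gives a matrix $M=(a_{ij})\in M_{n}(\mathbb{Z}[G])$, and by definition $\gamma_{Y/X}(u)=\det(I-Mu)$.

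The key step is to show that the matrix of $\mathcal{A}_{Z}$ in the basis $(\bar w_{j})$ is $\pi(M)$. The covering map $q\colon Y\to Z$ is locally bijective, so $q$ induces a bijection $E^{o}_{Y,w_{j}}\to E^{o}_{Z,\bar w_{j}}$. For $\varepsilon$ with $t(\varepsilon)=\sigma_{\varepsilon}w_{i}$ we get $t(q(\varepsilon))=q(\sigma_{\varepsilon}w_{i})=\bar\sigma_{\varepsilon}\bar w_{i}$. Here we use that $q$ intertwines the $G$-action on $Y$ with the $\Gamma$-action on $Z=Y_{H}$ through $G\to\Gamma$. It follows that
$$\mathcal{A}_{Z}(\bar w_{j})=\sum_{i}\pi(a_{ij})\,\bar w_{i}.$$
Since $\pi$ is a ring morphism, it commutes with determinants, so
$$\pi(\gamma_{Y/X}(u))=\det(I-\pi(M)u)=\gamma_{Z/X}(u).$$

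The main obstacle, though a mild one, is the bookkeeping in the key step. One must check that the $\Gamma$-action on $Z$ really is the one induced from $G$, so that $q(\sigma w)=\bar\sigma\,q(w)$, and that the bijection $E^{o}_{Y,w_{j}}\to E^{o}_{Z,\bar w_{j}}$ is compatible with the $G$-orbit decomposition of the terminal vertices. Both facts follow from the construction of $Z=Y_{H}$ as a quotient and from \cref{quotient_const}.
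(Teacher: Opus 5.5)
Your proof is correct and is essentially the paper's argument written in coordinates. The paper shows $q\circ\mathcal{A}_{Y}=\mathcal{A}_{Z}\circ q$ and then applies the base change property of determinants along $\pi$, using the isomorphism $F[\Gamma][u]\otimes_{F[G][u]}FV_{Y}[u]\simeq FV_{Z}[u]$; this is exactly your observation that the $\bar w_{j}$ form an $F[\Gamma]$-basis in which $\mathcal{A}_{Z}$ has matrix $\pi(M)$, together with $\pi(\det(I-Mu))=\det(I-\pi(M)u)$.
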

\begin{proof}
Consider the natural covering map $q: Y \rightarrow Z$; it induces a natural $F[G]$-module morphism $q:F V_Y \rightarrow F V_Z$ which we also denote by the same symbol.  Now, notice that for each $w \in V_{Y}$, we have
$$q(\mathcal{A}_Y(w))=\sum_{\substack{\varepsilon \in E_{Y,w}^{o}}} t(q(\varepsilon))=\sum_{\substack{e \in E_{Z,q(w)}^{o}}} t(e)=\mathcal{A}_Z(q(w)),$$
so that the following diagram
\begin{equation} \label{com1}
\begin{tikzcd}
F V_Y \arrow[r,"\mathcal{A}_Y"] \arrow[d,"q"]  & \arrow[d,"q"] F V_Y \\
F V_Z \arrow[r,"\mathcal{A}_Z"] & F V_Z
\end{tikzcd}
\end{equation}
of $F[G]$-modules commutes.  We let 
$$\Delta_{Y}(u) = \mathcal{I}_{Y} - \mathcal{A}_{Y}u \in {\rm End}_{F[G][u]}(FV_{Y}[u]) \text{ and } \Delta_{Z}(u) = \mathcal{I}_{Z} - \mathcal{A}_{Z}u \in {\rm End}_{F[\Gamma][u]}(FV_{Z}[u]).$$
The commutative diagram (\ref{com1}) induces yet another commutative diagram
\begin{equation} \label{com2}
\begin{tikzcd}[column sep=huge]
F[\Gamma][u]\otimes_{F[G][u]}FV_{Y}[u] \arrow[r,"{\rm id}\otimes \Delta_{Y}(u)"] \arrow[d]  & \arrow[d] F[\Gamma][u]\otimes_{F[G][u]}FV_{Y}[u] \\
F V_Z[u] \arrow[r,"\Delta_{Z}(u)"] & F V_Z[u]
\end{tikzcd}
\end{equation}
of $F[\Gamma][u]$-modules, where the two vertical arrows are isomorphisms.  The usual base change property for determinants combined with the diagram (\ref{com2}) gives
\begin{equation*}
\begin{aligned}
\pi(\gamma_{Y/X}(u)) &= \pi \left({\rm det}_{F[G][u]}(\Delta_{Y}(u)) \right)\\
&= {\rm det}_{F[\Gamma][u]}({\rm id} \otimes \Delta_{Y}(u)) \\
&= {\rm det}_{F[\Gamma][u]}(\Delta_{Z}(u)) \\
&= \gamma_{Z/X}(u),
\end{aligned}
\end{equation*}
as desired.
\begin{comment}\DV{Here.  Ok, we have to decide if we work over $\mathbb{Z}$ or $F$.  When we take character below, we have no choice but working with $F$, but in this subsection, I guess we have a choice.}
This, in turn, induces the following commutative diagram of $\mathbb{Z}[G][u]$-modules.
\begin{equation*}
\begin{tikzcd}[column sep=huge]
\mathbb{Z} V_Y [u]\arrow[r,"\mathcal{I}_{Y} - \mathcal{A}_{Y}u"] \arrow[d,"q"]  & \arrow[d,"q"] \mathbb{Z} V_Y [u]\\
\mathbb{Z} V_Z [u]\arrow[r,"\mathcal{I}_{Z} - \mathcal{A}_{Z}u"] & \mathbb{Z} V_Z [u]
\end{tikzcd}
\end{equation*}
Here, the vertical maps, which we also denote by $q$, are the natural maps induced by those in the previous diagram. Moreover, notice that $\mathbb{Z} V_Y[u]\cong \bigoplus_{i=1}^n \mathbb{Z}[G][u]\cdot w_i\cong \mathbb{Z}[G][u]^n$ and $\mathbb{Z} V_Z[u]\cong \bigoplus_{i=1}^n \mathbb{Z}[\Gamma][u]\cdot v_i\cong \mathbb{Z}[\Gamma][u]^n$. Under this identification $q$ is the same as the natural $\mathbb{Z}[G][u]$-module morphism induced by $\pi$. Therefore, we have
$$\pi({\rm det}_{\mathbb{Z}[G][u]}(\mathcal{I}_{Y} - \mathcal{A}_{Y}u))={\rm det}_{\mathbb{Z}[H][u]}(\mathcal{I}_{Z} - \mathcal{A}_{Z}u).$$
\end{comment}
\end{proof}

\begin{remark}
\cref{equiv_ind,equiv_inf} give equivariant ways of thinking about the induction and inflation properties of the Artin formalism. Compare, for example, with \cite[Proposition 1.8, page 87]{Tate:1984}.
\end{remark}

%\subsection{\texorpdfstring{$L$}{}-functions}
\subsection{\texorpdfstring{$L$}{}-functions}
Once more, we follow \cite{Gambheera/Vallieres:2025} closely, but we work with an algebraically closed field $F$ of characteristic zero.  The finite group $G$ is still assumed to be abelian throughout.  If $V$ is a finite dimensional $F$-linear representation of $G$, we let 
$$\sigma_{V}:F[G][u] \rightarrow F[u]$$
be the function defined as follows  (see \cite[Section 5.2 and Remark 5.3]{Gambheera/Vallieres:2025}):
\[
\begin{tikzcd}
    F[G][u]\arrow[r,"\rho_V"]&\textup{End}_{F}(V)[u]\arrow[r,"\cong"]&\textup{End}_{F[u]}(V[u])\arrow[r,"\det"] &F[u],
\end{tikzcd}
\]
where $\rho_V$ is the natural map induced by the representation $\rho\colon G\to \textup{End}_F(V)$ and the middle isomorphism is given by \cite[Theorem 2.1]{Gambheera/Vallieres:2025}.  The function $\sigma_{V}$ is multiplicative, meaning that
$$\sigma_{V}(P(u) \cdot Q(u)) = \sigma_{V}(P(u)) \cdot \sigma_{V}(Q(u))$$
for all $P(u),Q(u) \in F[G][u]$.  Moreover, \cite[Proposition 5.4]{Gambheera/Vallieres:2025} shows that if $V_{1}$ and $V_{2}$ are two isomorphic finite dimensional $F$-linear representations of $G$, then
\begin{equation}\label{well_defined}
\sigma_{V_{1}} = \sigma_{V_{2}}.
\end{equation}

\begin{definition} \label{def_L_function}
Let $Y/X$ be a Galois cover of finite digraphs with a (finite) abelian group of deck transformations $G = {\rm Gal}(Y/X)$.  Let $V$ be a finite dimensional $F$-linear representation of the finite abelian group $G$.  Then, we define 
$$g_{V}(u) = \sigma_{V}(\gamma_{Y/X}(u)) \in 1 + u F[u].$$
\end{definition}
Note that $g_{V}(u)\in 1 + u F[u] \subseteq F\llbracket u \rrbracket^{\times}$ so that $g_{V}(u)$ is invertible in $F\llbracket u \rrbracket$.  Its inverse $g_{V}(u)^{-1} \in F\llbracket u \rrbracket$ is the (Artin--Mazur) $L$-function associated with $V$.  In this article, we will be mainly concerned with the polynomial $g_{V}(u)$.  We first show that this polynomial depends only on the isomorphism class of $V$.

\begin{theorem}
With the notation as above, let $V_{1}$ and $V_{2}$ be two isomorphic finite dimensional $F$-linear representations of $G$.  Then
$$g_{V_{1}}(u) = g_{V_{2}}(u).$$
\end{theorem}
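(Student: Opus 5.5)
The statement follows immediately from the definitions together with (\ref{well_defined}). By \cref{def_L_function}, for $j=1,2$ we have
$$g_{V_{j}}(u) = \sigma_{V_{j}}(\gamma_{Y/X}(u)).$$
The element $\gamma_{Y/X}(u) \in \mathbb{Z}[G][u] \subseteq F[G][u]$ is defined purely in terms of the cover $Y/X$ (\cref{equivariant_def_zeta}) and does not depend on the representation. The whole problem therefore reduces to showing that the maps $\sigma_{V_{1}}$ and $\sigma_{V_{2}}$ from $F[G][u]$ to $F[u]$ coincide. This is exactly the content of (\ref{well_defined}), i.e.\ \cite[Proposition 5.4]{Gambheera/Vallieres:2025}. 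Applying it to $\gamma_{Y/X}(u)$ gives $g_{V_{1}}(u)=g_{V_{2}}(u)$.

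To keep the argument self-contained, I would also verify (\ref{well_defined}) directly, since the paper works over an arbitrary algebraically closed $F$. Let $\phi:V_{1}\to V_{2}$ be an $F[G]$-isomorphism, so that $\rho_{2}(g)=\phi\rho_{1}(g)\phi^{-1}$ for all $g\in G$. Extending $F$-linearly and $u$-linearly gives
$$\rho_{V_{2}}(P) = \phi\,\rho_{V_{1}}(P)\,\phi^{-1}$$
for every $P\in F[G][u]$, where both sides are viewed in $\mathrm{End}_{F[u]}(V_{j}[u])$ and $\phi$ is extended to an $F[u]$-linear isomorphism $V_{1}[u]\to V_{2}[u]$. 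The determinant over $F[u]$ is invariant under conjugation by an isomorphism of free $F[u]$-modules of finite rank: choose compatible bases, and the two matrices are conjugate by an invertible matrix. Hence $\sigma_{V_{2}}(P)=\sigma_{V_{1}}(P)$.

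I expect no real obstacle here. The only point needing care is that the middle identification $\mathrm{End}_{F}(V)[u]\cong \mathrm{End}_{F[u]}(V[u])$ is compatible with conjugation by $\phi$, which is clear from its definition on coefficients.
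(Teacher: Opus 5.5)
Your proposal is correct and follows the paper's proof, which likewise just applies (\ref{well_defined}) to $\gamma_{Y/X}(u)$ in \cref{def_L_function}. Your direct verification of (\ref{well_defined}) via conjugation-invariance of the determinant over $F[u]$ is a sound optional addition; if you transport a basis of $V_{1}$ to $V_{2}$ via $\phi$, the two matrices are in fact equal, not merely conjugate.
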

\begin{proof}
This follows directly from (\ref{well_defined}).
\end{proof}

\cref{additivity,induction,inflation} below show that the polynomials $g_{V}(u)$ satisfy the usual Artin formalism.  (See for instance, \cite[page 15]{Tate:1984}.)

\begin{theorem}[Additivity] \label{additivity}
Let $V_{1}$ and $V_{2}$ be two finite dimensional $F$-linear representations of $G$, then
$$g_{V_{1} \oplus V_{2}}(u) = g_{V_{1}}(u) \cdot g_{V_{2}}(u).$$
\end{theorem}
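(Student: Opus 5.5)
The plan is to reduce additivity to the multiplicativity of determinants under direct sums. Since $g_{V}(u) = \sigma_{V}(\gamma_{Y/X}(u))$ by \cref{def_L_function}, it suffices to prove the purely algebraic identity
$$\sigma_{V_{1}\oplus V_{2}}(P(u)) = \sigma_{V_{1}}(P(u)) \cdot \sigma_{V_{2}}(P(u))$$
for every $P(u) \in F[G][u]$. We then apply it to $P(u) = \gamma_{Y/X}(u)$.

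First, unwind the definition of $\sigma_{V}$. Write $\rho_{i}\colon G \to {\rm End}_{F}(V_{i})$ and $\rho = \rho_{1}\oplus\rho_{2}$ for the representation on $V = V_{1}\oplus V_{2}$. For $P(u) = \sum_{k} \lambda_{k}u^{k}$ with $\lambda_{k} \in F[G]$, the map $\rho_{V}$ sends $P(u)$ to $\sum_{k}\rho(\lambda_{k})u^{k} \in {\rm End}_{F}(V)[u]$. Since $\rho(\lambda_{k}) = \rho_{1}(\lambda_{k}) \oplus \rho_{2}(\lambda_{k})$ is block diagonal with respect to $V = V_{1}\oplus V_{2}$, it follows that $\rho_{V}(P(u))$ is block diagonal as well, with blocks $\rho_{V_{1}}(P(u))$ and $\rho_{V_{2}}(P(u))$.

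Next, check that the isomorphism ${\rm End}_{F}(V)[u] \cong {\rm End}_{F[u]}(V[u])$ of \cite[Theorem 2.1]{Gambheera/Vallieres:2025} respects this decomposition. It is the natural map $\sum_{k} \phi_{k} u^{k} \mapsto \bigl(v u^{j} \mapsto \sum_{k}\phi_{k}(v)u^{j+k}\bigr)$, and $V[u] = V_{1}[u]\oplus V_{2}[u]$ as free $F[u]$-modules. Hence the image of $\rho_{V}(P(u))$ is the direct sum of the corresponding $F[u]$-endomorphisms of $V_{1}[u]$ and $V_{2}[u]$.

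Finally, the determinant of a block diagonal endomorphism of a free module of finite rank over the commutative ring $F[u]$ is the product of the determinants of the blocks. Concretely, choose $F$-bases of $V_{1}$ and $V_{2}$; these give $F[u]$-bases of $V_{i}[u]$, and the matrix over $F[u]$ is block diagonal. This yields the identity, and specializing to $P(u) = \gamma_{Y/X}(u)$ gives the theorem.

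I expect no real obstacle here. The only point needing care is confirming that the middle isomorphism is the natural one and is compatible with direct sums. I will state this explicitly, noting that \cref{well_defined} makes the choice of bases irrelevant.
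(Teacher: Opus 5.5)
Your argument is correct. The paper simply cites \cite[Theorem 5.14]{Gambheera/Vallieres:2025}, and your reduction to the identity $\sigma_{V_1\oplus V_2}=\sigma_{V_1}\cdot\sigma_{V_2}$ on $F[G][u]$, via block-diagonal determinants over $F[u]$, is the standard argument underlying that citation. One small quibble: the determinant of an endomorphism of a free $F[u]$-module of finite rank does not depend on the choice of basis, so you do not need \cref{well_defined} for that point.
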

\begin{proof}
See \cite[Theorem 5.14]{Gambheera/Vallieres:2025}.
\end{proof}

\begin{theorem}[Induction] \label{induction}
With the notation as above, let $V$ be a finite dimensional $F$-linear representation of $H = {\rm Gal}(Y/Z)$, and let ${\rm Ind}(V)$ denote the induced representation from $H$ to $G = {\rm Gal}(Y/X)$.  Then,
$$g_{{\rm Ind}(V)}(u) = g_{V}(u).$$
\end{theorem}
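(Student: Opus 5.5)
We prove the Induction property by combining the equivariant induction property (\cref{equiv_ind}) with a compatibility between the map $N_{\Gamma}$ and the functions $\sigma_{V}$. Here $g_{V}(u)$ on the right is computed for the cover $Y/Z$, so $g_{V}(u) = \sigma_{V}(\gamma_{Y/Z}(u))$. The left side is $g_{{\rm Ind}(V)}(u) = \sigma_{{\rm Ind}(V)}(\gamma_{Y/X}(u))$. By \cref{equiv_ind}, $\gamma_{Y/Z}(u) = N_{\Gamma}(\gamma_{Y/X}(u))$. It therefore suffices to establish the identity
\begin{equation*}
\sigma_{{\rm Ind}(V)}(P) = \sigma_{V}(N_{\Gamma}(P)) \quad \text{for all } P \in F[G][u],
\end{equation*}
where $V$ is a finite dimensional representation of $H$ and ${\rm Ind}(V) = F[G] \otimes_{F[H]} V$.

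To prove this identity, work over $R = F[H][u]$. Choose coset representatives $g_{1},\ldots,g_{k}$ of $G/H$, with $k = \#\Gamma$. These give an $R$-basis of $F[G][u]$, and $m_{P}$ is represented in this basis by a matrix $M \in M_{k}(R)$. By definition, $N_{\Gamma}(P) = \det_{R}(M)$.

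On the other side, ${\rm Ind}(V)[u] = F[G][u] \otimes_{R} V[u]$ decomposes as $\bigoplus_{i} g_{i} \otimes V[u]$. The action of $P$ on ${\rm Ind}(V)[u]$ is $m_{P} \otimes {\rm id}$. In the decomposition above it is given by the block matrix $\rho_{V}(M)$, obtained by applying $\rho_{V} : R \to {\rm End}_{F[u]}(V[u])$ entrywise. So $\sigma_{{\rm Ind}(V)}(P) = \det_{F[u]}(\rho_{V}(M))$.

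The identity then reduces to the statement that the determinant of a block matrix with pairwise commuting blocks equals the determinant of the "determinant of blocks". Concretely,
\begin{equation*}
\det_{F[u]}(\rho_{V}(M)) = \det_{F[u]}\bigl(\rho_{V}(\det_{R} M)\bigr).
\end{equation*}
This holds because all the blocks lie in the commutative subring $\rho_{V}(R)$. It is the standard lemma (e.g. Kovacs–Silver–Williams) that for a commutative subring $C$ of a matrix ring, $\det(N) = \det(\det_{C} N)$ for $N \in M_{k}(C)$. The right-hand side is exactly $\sigma_{V}(N_{\Gamma}(P))$, which gives the identity.

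To streamline, since $F$ is algebraically closed and $H$ is abelian, one could instead use additivity (\cref{additivity}) together with (\ref{well_defined}) to reduce to $V$ one-dimensional. In that case $\rho_{V}$ is a character $\chi : R \to F[u]$, and the claim becomes that $\chi$ commutes with $\det_{R}$. This is immediate because a ring homomorphism commutes with determinants, and no block determinant lemma is needed. For this reduction, note that ${\rm Ind}$ commutes with direct sums and both sides are multiplicative under $\oplus$.

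The main obstacle is carefully identifying the matrix of $P$ acting on ${\rm Ind}(V)[u]$ with $\rho_{V}(M)$. This requires checking that $P \cdot (g_{j} \otimes v) = \sum_{i} g_{i} \otimes M_{ij} v$. It follows from $P g_{j} = \sum_{i} g_{i} M_{ij}$ in $F[G][u]$ and moving $M_{ij} \in R$ across the tensor.
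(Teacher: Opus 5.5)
Your proposal is correct and follows the paper's route. The paper derives the statement from the equivariant induction property \cref{equiv_ind}, $N_{\Gamma}(\gamma_{Y/X}(u)) = \gamma_{Y/Z}(u)$, together with the compatibility $\sigma_{{\rm Ind}(V)} = \sigma_{V} \circ N_{\Gamma}$, for which it simply defers to the argument of \cite[Theorem 5.15]{Gambheera/Vallieres:2025}. You verify that compatibility directly, and both of your verifications (the commuting-block determinant lemma, or reduction to characters via additivity) are sound because $F[H][u]$ is commutative and $F$ is algebraically closed of characteristic zero.
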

\begin{proof}
This follows from \cref{equiv_ind} just as in the proof of \cite[Theorem 5.15]{Gambheera/Vallieres:2025}.
\end{proof}

\begin{theorem}[Inflation] \label{inflation}
With the notation as above, let $V$ be a finite dimensional representation of $\Gamma = {\rm Gal}(Z/X)$, and let ${\rm Inf}(V)$ be its inflation to $G$, namely the representation of $G$ obtained from the natural projection map $\pi:F[G] \rightarrow F[\Gamma]$.  Then,
$$g_{{\rm Inf}(V)}(u) = g_{V}(u).$$ 
\end{theorem}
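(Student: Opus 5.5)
The plan is to reduce the statement to the equivariant inflation property, \cref{equiv_inf}, by unwinding \cref{def_L_function}. Here $V$ is a representation of $\Gamma = {\rm Gal}(Z/X)$, so $g_{V}(u)$ is computed for the cover $Z/X$:
$$g_{V}(u) = \sigma_{V}(\gamma_{Z/X}(u)),$$
where $\sigma_{V}:F[\Gamma][u] \rightarrow F[u]$. On the other side, ${\rm Inf}(V)$ is a representation of $G$, so
$$g_{{\rm Inf}(V)}(u) = \sigma_{{\rm Inf}(V)}(\gamma_{Y/X}(u)),$$
where $\sigma_{{\rm Inf}(V)}:F[G][u] \rightarrow F[u]$.

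The key step is to prove the compatibility
$$\sigma_{{\rm Inf}(V)} = \sigma_{V} \circ \pi,$$
with $\pi:F[G][u] \rightarrow F[\Gamma][u]$ the map (\ref{pi_map}). The representation ${\rm Inf}(V)$ has underlying space $V$, and its structure map $\rho_{{\rm Inf}(V)}:G \rightarrow {\rm End}_{F}(V)$ is $\rho_{V} \circ \pi$ restricted to $G$. Extending $F$-linearly and then $u$-linearly shows that the first arrow in the definition of $\sigma_{{\rm Inf}(V)}$, namely
$$F[G][u] \rightarrow {\rm End}_{F}(V)[u],$$
factors as $\rho_{V} \circ \pi$. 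The remaining two arrows, the isomorphism ${\rm End}_{F}(V)[u] \cong {\rm End}_{F[u]}(V[u])$ and the determinant, depend only on the space $V$. They are therefore literally the same for both representations, which gives the compatibility.

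Combining the compatibility with \cref{equiv_inf} then gives
$$g_{{\rm Inf}(V)}(u) = \sigma_{V}(\pi(\gamma_{Y/X}(u))) = \sigma_{V}(\gamma_{Z/X}(u)) = g_{V}(u).$$

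I do not expect a real obstacle. The only point needing care is that \cref{equiv_inf} is stated over $F$, whereas $\gamma_{Y/X}(u)$ was defined over $\mathbb{Z}[G][u]$. This is harmless: the determinant commutes with the base change $\mathbb{Z}[G][u] \rightarrow F[G][u]$, because $\mathbb{Z}V_{Y}[u]$ is free and its base change is $FV_{Y}[u]$. One also uses that $Z/X$ is Galois with group $\Gamma$, which the paper has already noted via the appendix results, so that $g_{V}(u)$ is defined.
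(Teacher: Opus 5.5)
Your proposal is correct and matches the paper's proof: it also observes $\rho_{{\rm Inf}(V)} = \rho_{V}\circ\pi$, deduces $\sigma_{{\rm Inf}(V)} = \sigma_{V}\circ\pi$, and concludes with \cref{equiv_inf}. The paper cites \cite[(5.1)]{Gambheera/Vallieres:2025} for that compatibility where you verify it directly from the definition, and it leaves implicit the base change from $\mathbb{Z}[G][u]$ to $F[G][u]$ that you make explicit.
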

\begin{proof}
For simplicity, let $W = {\rm Inf}(V)$.  Consider the $F[u]$-algebra morphism 
$$\rho_{V}:F[\Gamma][u] \rightarrow {\rm End}_{F}(V)[u]$$ 
induced by the $F$-algebra morphism $\rho_{V}:F[\Gamma] \rightarrow {\rm End}_{F}(V)$ associated with the representation $V$.  Then, note that $\rho_{W} = \rho_{V} \circ \pi$, where $\pi$ is the map defined in (\ref{pi_map}) above.  It follows from \cite[(5.1)]{Gambheera/Vallieres:2025} that $\sigma_{V} \circ \pi = \sigma_{W}$, and the result follows from \cref{def_L_function} and \cref{equiv_inf}.
\end{proof}

As before, let $Y/X$ be a Galois cover of finite digraphs with abelian Galois group $G = {\rm Gal}(Y/X)$.  If $\psi \in \widehat{G}(F)$ corresponds to a one dimensional irreducible $F$-linear representation $V$ of $G$, then it induces an $F$-algebra morphism $\psi:F[G] \rightarrow F$ which in turn induces an $F[u]$-algebra morphism
\begin{equation} \label{apply_to_coef}
F[G][u]\rightarrow F[u],
\end{equation}
obtained by applying $\psi$ to the coefficients of a polynomial in $F[G][u]$.  We denote the map (\ref{apply_to_coef}) by the same symbol $\psi$.  \cite[Proposition 5.5]{Gambheera/Vallieres:2025} shows that one has
\begin{equation*} 
\psi(\gamma_{Y/X}(u)) = g_{V}(u),
\end{equation*}
where $\psi$ denotes the map (\ref{apply_to_coef}) above.  For that reason, if $\psi \in \widehat{G}(F)$ corresponds to a one-dimensional irreducible $F$-linear representation $V$ of $G$, then we let
\begin{equation} \label{L_characters}
g_{Y/X}(u,\psi) = \psi(\gamma_{Y/X}(u)) = g_{V}(u).
\end{equation}

\begin{theorem} \label{product_dec}
Let $Y/X$ be a Galois cover of finite digraphs with abelian Galois group $G = {\rm Gal}(Y/X)$, and let $F$ be an algebraically closed field of characteristic zero.  Then, one has
$$g_{Y}(u) = g_{X}(u)  \prod_{\substack{\psi \in \widehat{G}(F) \\ \psi \neq \psi_{0}}} g_{Y/X}(u,\psi).$$
\end{theorem}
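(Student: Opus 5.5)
The plan is to express $g_{Y}(u)$ through the regular representation of $G$ and then apply the Artin formalism of \cref{additivity,induction}. Regard $Y/X$ as the cover $Y \to Y_{H}$ with $H = G$. The quotient $Y_{G}$ is $X$ itself, so in the notation of the Artin formalism $Z = X$. Next, apply the same formalism with $H$ the trivial subgroup. Then $Y_{\{1\}} = Y$, and the intermediate cover $Y/Y$ is the identity cover with trivial Galois group.

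The first step is to identify $g_{Y}(u)$ with $g_{V}(u)$, where $V$ is the trivial one-dimensional representation of the trivial group. Here $\mathbb{Z}[\{1\}][u] = \mathbb{Z}[u]$, so by \cref{equivariant_def_zeta} we get $\gamma_{Y/Y}(u) = \det_{\mathbb{Z}[u]}(\mathcal{I}_{Y} - \mathcal{A}_{Y}u) = g_{Y}(u)$. Since $\sigma_{V}$ is the identity on $F[u]$, it follows that $g_{V}(u) = g_{Y}(u)$. By \cref{induction}, applied with $H = \{1\}$, we then obtain $g_{Y}(u) = g_{{\rm Ind}(V)}(u)$, and ${\rm Ind}(V)$ is the regular representation $F[G]$.

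The second step uses the fact that $F$ is algebraically closed of characteristic zero and $G$ is finite abelian. Under these hypotheses the regular representation decomposes as $\bigoplus_{\psi \in \widehat{G}(F)} V_{\psi}$, a direct sum of one-dimensional representations, each character appearing once. By \eqref{well_defined}, $g$ depends only on the isomorphism class of the representation, so additivity (\cref{additivity}) gives
$$g_{Y}(u) = \prod_{\psi \in \widehat{G}(F)} g_{V_{\psi}}(u) = \prod_{\psi} g_{Y/X}(u,\psi),$$
using (\ref{L_characters}).

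The remaining step is to check that the trivial-character factor equals $g_{X}(u)$. The trivial character $\psi_{0}$ is the inflation of the trivial representation of $\Gamma = G/G$, so \cref{inflation} gives $g_{V_{\psi_{0}}}(u) = g_{V'}(u)$, where $V'$ is the trivial representation for the identity cover $X/X$. The first-step computation, now applied to $X$, shows $g_{V'}(u) = g_{X}(u)$. Alternatively, one can argue directly: $\psi_{0}$ is the augmentation map, so by \cref{equiv_inf} with $H = G$ we get $\psi_{0}(\gamma_{Y/X}(u)) = \gamma_{X/X}(u) = g_{X}(u)$.

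I expect the only real care to be needed in the degenerate cases, namely the identity covers $Y/Y$ and $X/X$ with trivial group. There one must confirm that the determinant over $F[\{1\}][u] = F[u]$ agrees with \eqref{AM_zeta}, which is immediate.
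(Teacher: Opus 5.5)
Your proposal is correct and follows essentially the same route as the paper's proof. You use \cref{induction} from the trivial subgroup to identify $g_{Y}(u)$ with the $L$-function of the regular representation $F[G]$, then \cref{additivity} to split it into the character factors $g_{Y/X}(u,\psi)$, and \cref{inflation} (or, as you note, \cref{equiv_inf} with $H=G$, i.e.\ the augmentation map) to identify the trivial-character factor with $g_{X}(u)$.
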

\begin{proof}
This is a consequence of the Artin formalism.  Consider the trivial representation $V_{0}$ of the trivial subgroup $\{1\}$ of $G$, and set $V = {\rm Ind}(V_{0})$ to be the induced representation from $H$ to $G$.  Then \cref{induction} implies that
$$g_{{\rm Ind}(V_{0})}(u) = g_{V_{0}}(u) = g_{Y}(u).$$
Since ${\rm Ind}(V_{0}) \simeq F[G]$, \cref{additivity} combined with (\ref{L_characters}) give
\begin{equation*}
\begin{aligned}
g_{Y}(u) &= \prod_{\psi \in \widehat{G}(F)} g_{F[G]e_{\psi}}(u) \\
&= g_{Y/X}(u,\psi_{0})  \prod_{\substack{\psi \in \widehat{G}(F) \\ \psi \neq \psi_{0}}} g_{Y/X}(u,\psi).
\end{aligned}
\end{equation*}
 Finally, \cref{inflation} shows that $g_{Y/X}(u,\psi_{0}) = g_{X}(u)$, which completes the proof.
\end{proof}

Given an abelian Galois cover $Y/X$ of finite digraphs with Galois group $G = {\rm Gal}(Y/X)$, for each $\psi \in \widehat{G}(F)$, we define
$$ r(\psi) = {\rm ord}_{u=1}(g_{Y/X}(u,\psi)).$$
Note that \cref{inflation} implies the equality
$$r(\psi_{0}) = r_{X}.$$

\begin{corollary} \label{decomp}
Let $Y/X$ be a Galois cover of finite digraphs with abelian Galois group $G$.  Then, one has
$$g_{Y}^{*}(1) = g_{X}^{*}(1)  \prod_{\substack{\psi \in \widehat{G}(F) \\ \psi \neq \psi_{0}}} g_{Y/X}^{*}(1,\psi),$$
and
$$r_{Y} = r_{X} + \sum_{\substack{\psi \in \widehat{G}(F) \\ \psi \neq \psi_{0}}} r(\psi).$$
\end{corollary}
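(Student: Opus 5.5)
The plan is to deduce both identities directly from \cref{product_dec} by looking at orders of vanishing and leading Taylor coefficients at $u=1$. The basic observation is that, for nonzero polynomials $P,Q \in F[u]$, one has
$$\operatorname{ord}_{u=1}(PQ) = \operatorname{ord}_{u=1}(P) + \operatorname{ord}_{u=1}(Q) \quad\text{and}\quad (PQ)^{*}(1) = P^{*}(1)\, Q^{*}(1).$$
To see this, write $P(u) = P^{*}(1)(u-1)^{a} + O((u-1)^{a+1})$ and $Q(u) = Q^{*}(1)(u-1)^{b} + O((u-1)^{b+1})$. Multiplying the two Taylor expansions gives
$$PQ = P^{*}(1)\,Q^{*}(1)\,(u-1)^{a+b} + O\bigl((u-1)^{a+b+1}\bigr),$$
and $P^{*}(1)\,Q^{*}(1) \neq 0$ because $F$ is a field. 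By induction, the same holds for any finite product of nonzero polynomials.

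Next, I would check that every factor appearing in \cref{product_dec} is a nonzero polynomial. Each $g_{Y/X}(u,\psi)$ lies in $1 + uF[u]$, since it is the image of $\gamma_{Y/X}(u) \in 1 + u\mathbb{Z}[G][u]$ under the coefficientwise map (\ref{apply_to_coef}). Likewise $g_X(u) \in 1 + u\mathbb{Z}[u]$. So all factors are nonzero, their orders at $u=1$ are finite, and their leading coefficients $g^{*}(1)$ are well defined.

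Applying the multiplicativity to the factorization
$$g_Y(u) = g_X(u)\prod_{\psi\neq\psi_0} g_{Y/X}(u,\psi)$$
then gives the two claims:
\begin{itemize}
\item additivity of orders yields $r_Y = r_X + \sum_{\psi\neq\psi_0} r(\psi)$, using $r(\psi_0) = r_X$, or simply the definition $r_X = \operatorname{ord}_{u=1} g_X$;
\item multiplicativity of leading coefficients yields $g_Y^{*}(1) = g_X^{*}(1)\prod_{\psi\neq\psi_0} g^{*}_{Y/X}(1,\psi)$.
\end{itemize}

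The only point needing any care is a mild compatibility issue: $g_Y$ and $g_X$ have integer coefficients, while the other factors live over $F$. This causes no difficulty, because the Taylor coefficients at $1$ of a polynomial in $\mathbb{Z}[u]$ do not depend on whether it is viewed over $\mathbb{Q}$ or over $F$. Beyond that there is no real obstacle; the corollary is a formal consequence of \cref{product_dec}.
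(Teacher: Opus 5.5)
Your proposal is correct and follows the paper's route: the paper simply states that the corollary is a direct consequence of \cref{product_dec}. You have spelled out the implicit step, namely that orders of vanishing at $u=1$ add and leading Taylor coefficients multiply for nonzero polynomials over $F$.
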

\begin{proof}
This is a direct consequence of \cref{product_dec}.
\end{proof}
The following corollary can be viewed as an analogue of the analytic class number formula in algebraic number theory.
\begin{corollary} \label{spe_decomp}
Let $Y/X$ be a Galois cover of finite digraphs with abelian Galois group $G$ for which $\delta(X) = \delta(Y)  = 0$.  Then, one has
$$\frac{m(Y)\cdot  \#{\rm BF}(Y)_{\rm tors}}{m(X)  \cdot \#{\rm BF}(X)_{\rm tors}} = \prod_{\substack{\psi \in \widehat{G}(F) \\ \psi \neq \psi_{0}}} g_{Y/X}^{*}(1,\psi). $$
\end{corollary}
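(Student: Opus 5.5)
The plan is to combine \cref{decomp} with \cref{the_special_value}, applied once to $Y$ and once to $X$. Both digraphs satisfy $\delta(X)=\delta(Y)=0$. Moreover, $Y$ is strongly connected by the standing convention for Galois covers (see \cref{gr_acting_on_digraph}), and so is $X$, because it is the image of $Y$ under a covering map. Hence \cref{the_special_value} applies to each of them and gives nonzero integers $m(Y)$ and $m(X)$ with
$$g_{Y}^{*}(1) = m(Y)\cdot \#{\rm BF}(Y)_{\rm tors}, \qquad g_{X}^{*}(1) = m(X)\cdot \#{\rm BF}(X)_{\rm tors}.$$
The same theorem guarantees that $g_X^*(1)\neq 0$, since $m(X)\neq 0$ and the torsion order is at least $1$. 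Alternatively, this follows from the definition of $P^*(1)$ as the first nonvanishing Taylor coefficient, because $g_X(u)$ is a nonzero polynomial with constant term $1$.

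Next I invoke the first identity of \cref{decomp}:
$$g_{Y}^{*}(1) = g_{X}^{*}(1) \prod_{\psi \neq \psi_{0}} g_{Y/X}^{*}(1,\psi).$$
Dividing both sides by the nonzero number $g_X^*(1)$ and substituting the two expressions from \cref{the_special_value} yields the claimed equality immediately.

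The only point that needs care is that $g_Y^*(1)$ really factors as the product of the $g^*(1,\psi)$. This factorization holds because leading Taylor coefficients at $u=1$ are multiplicative: if $P=(u-1)^aP_1$ and $Q=(u-1)^bQ_1$ with $P_1(1),Q_1(1)\neq 0$, then $(PQ)^*(1)=P_1(1)Q_1(1)=P^*(1)Q^*(1)$. This multiplicativity is already implicit in \cref{decomp}, which is stated earlier, so there is no genuine obstacle beyond confirming the hypotheses of \cref{the_special_value} for both $X$ and $Y$.
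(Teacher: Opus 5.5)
Your proposal is correct and is essentially the paper's proof: the paper's one-line argument also just combines \cref{the_special_value} (applied to both $X$ and $Y$) with the first identity of \cref{decomp}. Your extra checks — strong connectivity of $X$ and $Y$ from the definition of a Galois cover, and $g_X^{*}(1)\neq 0$ — are exactly the routine hypotheses the paper leaves implicit.
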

\begin{proof}
This follows directly from \cref{the_special_value,decomp}.
\end{proof}

\section{Voltage assignments} \label{section:voltage}
In this section, we recall how to construct Galois covers of digraphs using voltage assignments in the category of digraphs.  Our main references for this section are \cite{Gross:1974,Gross/Tucker:1977,Gross/Tucker:2001}.  This is the bottom-up approach alluded to above at the beginning of \cref{section:equivariant}. 

%\subsection{Galois covers obtained from voltage assignments}
\subsection{Galois covers obtained from voltage assignments} \label{volt_assignment}
Starting with a strongly connected digraph $X$ and a group $G$, recall that a voltage assignment is simply a function $\alpha:E_{X} \rightarrow G$.  To such a function, one can associate another digraph $Y = X(G,\alpha)$, called the derived digraph, whose construction is as follows.  Its collection of vertices and edges are
$$V_{Y} = V_{X} \times G \text{ and } E_{Y} = E_{X} \times G, $$
respectively.  The incidence map is given by 
$$o(e,\sigma) = (o(e),\sigma) \text{ and } t(e,\sigma) = (t(e),\sigma \alpha(e)). $$
One checks that the natural functions
$$(v,\sigma) \mapsto v \text{ and } (e,\sigma) \mapsto e $$
induce a morphism of digraphs $f:X(G,\alpha) \rightarrow X$, which is a cover.

For $\tau \in G$, let $\phi(\tau):X(G,\alpha) \rightarrow X(G,\alpha)$ be given by
$$(v,\sigma)\mapsto \phi(\tau)(v,\sigma) = (v,\tau \sigma) \text{ and } (e,\sigma) \mapsto \phi(\tau)(e,\sigma) = (e,\tau \sigma).$$
We leave it to the reader to check that 
\begin{equation} \label{iso}
\phi:G \rightarrow {\rm Aut}(X(G,\alpha)/X)
\end{equation}
given by $\tau \mapsto \phi(\tau)$ is a well-defined injective group homomorphism.  It follows that $G$ acts on $X(G,\alpha)$, and moreover the action of $G$ on each of the fibers $f^{-1}(v)$ for all $v \in V_{X}$ is transitive.  The action of $G$ on the vertices of $X(G,\alpha)$ is also free. 

If $c = e_{1} \cdot \ldots \cdot e_{m}$ is any path in $X$, then we set
$$\alpha(c) = \alpha(e_{1}) \ldots \alpha(e_{m}) \in G.$$
Given $v_{0} \in X$, we let $\pi_{1}(X,v_{0})$ be the monoid consisting of closed paths $\gamma$ in $X$ based at $v_{0}$.  The monoid operation is simply concatenation, and the identity is the empty path based at $v_{0}$.  A function $\alpha:E_{X} \rightarrow G$ induces a monoid morphism
\begin{equation} \label{induce_pi_1_map}
\rho_{\alpha}:\pi_{1}(X,v_{0}) \rightarrow G,
\end{equation}
defined via $\rho_{\alpha}(\gamma) = \alpha(\gamma)$ whenever $\gamma \in \pi_{1}(X,v_{0})$.  The following proposition is adapted from the corresponding statement for graphs (see \cite[Theorem 2.11]{Ray/Vallieres:2025} and also \cite[Lemma 2.5]{Lei/Muller:2026}).

\begin{proposition} \label{connectedness}
Let $X$ be a strongly connected digraph and $G$ be a group.  Let also $\alpha:E_{X} \rightarrow G$ be a voltage assignment and let $v_{0} \in V_{X}$.  The digraph $X(G,\alpha)$ is strongly connected if and only if the monoid morphism $\rho_{\alpha}:\pi_{1}(X,v_{0}) \rightarrow G$ defined above in (\ref{induce_pi_1_map}) is surjective. 
\end{proposition}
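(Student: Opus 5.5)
The plan is to prove both directions by describing paths in the derived digraph $Y = X(G,\alpha)$ explicitly in terms of paths in $X$ together with their voltages. Every path $c = e_{1}\cdot\ldots\cdot e_{m}$ in $X$ with $o(c)=v$ has, for each $\sigma \in G$, a unique lift $\tilde c = (e_{1},\sigma)\cdot(e_{2},\sigma\alpha(e_{1}))\cdot\ldots\cdot(e_{m},\sigma\alpha(e_{1}\cdots e_{m-1}))$ starting at $(v,\sigma)$. By the incidence rule $t(e,\sigma) = (t(e),\sigma\alpha(e))$ and induction on $m$, this lift ends at $(t(c),\sigma\alpha(c))$. Conversely, every path in $Y$ projects under the covering map $f$ to a path in $X$ and is the lift of its image. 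This gives the following dictionary: there is a path in $Y$ from $(v,\sigma)$ to $(w,\tau)$ if and only if there is a path $c$ in $X$ from $v$ to $w$ with $\alpha(c) = \sigma^{-1}\tau$.

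For the direction ($\Rightarrow$), assume $Y$ is strongly connected and let $g \in G$. For $g\neq 1$, strong connectivity gives a path from $(v_{0},1)$ to $(v_{0},g)$. By the dictionary, its image is a closed path $\gamma$ at $v_{0}$ with $\alpha(\gamma)=g$, so $g \in \Im(\rho_{\alpha})$. The case $g = 1$ is covered by the empty path, which is the identity of the monoid. Hence $\rho_{\alpha}$ is surjective.

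For the direction ($\Leftarrow$), assume $\rho_{\alpha}$ is surjective and fix $(v,\sigma),(w,\tau)\in V_{Y}$. Since $X$ is strongly connected, choose a path $c_{1}$ from $v$ to $v_{0}$ and a path $c_{2}$ from $v_{0}$ to $w$; if $v = v_{0}$ or $w = v_{0}$, use the empty path. Next pick $\gamma \in \pi_{1}(X,v_{0})$ with $\alpha(\gamma) = \alpha(c_{1})^{-1}\sigma^{-1}\tau\,\alpha(c_{2})^{-1}$. The concatenation $c = c_{1}\gamma c_{2}$ is a path from $v$ to $w$ with $\alpha(c) = \sigma^{-1}\tau$. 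By the dictionary, its lift starting at $(v,\sigma)$ ends at $(w,\tau)$, so $Y$ is strongly connected.

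I expect no serious obstacle. The only points needing care are the bookkeeping of the order of multiplication in the nonabelian case (voltages multiply on the right, consistent with $t(e,\sigma)=(t(e),\sigma\alpha(e))$) and the degenerate cases involving empty paths. The definition of strongly connected only requires paths between distinct vertices, so the case $g=1$ never actually needs a nonempty closed path.
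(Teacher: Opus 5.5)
Your proof is correct and follows essentially the paper's route: the forward direction is identical (project a path from $(v_0,1_G)$ to $(v_0,g)$, using the empty path when $g=1_G$), and the converse uses the same lift-with-voltage computation. The only difference is that the paper first connects vertices in the fiber over $v_0$ and then translates the lift of $c_2$ by a deck transformation $\phi(\sigma)$. You instead absorb everything into one loop of voltage $\alpha(c_1)^{-1}\sigma^{-1}\tau\,\alpha(c_2)^{-1}$, which is slightly more direct bookkeeping of the same idea.
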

\begin{proof}
Assume first that $X(G,\alpha)$ is strongly connected.  Then, for any $\sigma \in G$, there is a closed path $c$ in $X(G,\alpha)$ going from $(v_{0},1_{G})$ to $(v_{0},\sigma)$. The path $c$ is of the form
$$(e_{1},1_{G})\cdot(e_{2},\alpha(e_{1}))\cdot \ldots \cdot(e_{m},\alpha(e_{1})\ldots \alpha(e_{m-1})),$$
where $o(e_{1}) = v_{0} = t(e_{m})$, $t(e_{i}) = o(e_{i+1})$ for $i=1,\ldots,m-1$, and $\sigma = \alpha(e_{1})\ldots\alpha(e_{m})$.  Thus, the closed path $\gamma = f(c) = e_{1}\cdot \ldots \cdot e_{m}$ satisfies $\rho_{\alpha}(\gamma) = \sigma$ which shows that $\rho_{\alpha}$ is surjective.

For the converse, we show first that given $(v_{0},\sigma), (v_{0},\tau) \in f^{-1}(v_{0})$, there exists a path in $X(G,\alpha)$ going from $(v_{0},\sigma)$ to $(v_{0},\tau)$.  Indeed, the hypothesis shows that there exists $\gamma \in \pi_{1}(X,v_{0})$ such that $\rho_{\alpha}(\gamma) = \sigma^{-1} \tau$.  One has $\gamma = e_{1} \cdot \ldots \cdot e_{m}$ for some $e_{i} \in E_{X}$.  Then, the path
$$(e_{1},\sigma)\cdot(e_{2},\sigma \alpha(e_{1}))\cdot \ldots \cdot (e_{m},\sigma \alpha(e_{1}) \ldots \alpha(e_{m-1}))$$
connects $(v_{0},\sigma)$ to $(v_{0},\tau)$.  Next, say $w_{1},w_{2}$ are two arbitrary vertices of $X(G,\alpha)$, and assume that $v_{i} = f(w_{i})$ for $i=1,2$.  Since $X$ is assumed to be strongly connected, there exists a path in $X$ going from $v_{1}$ to $v_{0}$.  Since $f:X(G,\alpha) \rightarrow X$ is a cover, this path can be lifted to a path $c_{1}$ in $X(G,\alpha)$ starting at $w_{1}$ and ending at a vertex $w_{0}'$ of $X(G,\alpha)$ lying above $v_{0}$.  Similarly, one can lift a path in $X$ starting at $v_{0}$ and ending at $v_{2}$ to a path $c_{2}$ in $X(G,\alpha)$ starting at $w_{0}'$ and ending at a vertex $w_{2}'$ of $X(G,\alpha)$ lying above $v_{2}$.  Using (\ref{iso}), there exists $\sigma \in G$ such that $\sigma \cdot w_{2}' = w_{2}$.  The path $\sigma \cdot c_{2}$ in $X(G,\alpha)$ starts now at $\sigma \cdot w_{0}'$ and ends at $w_{2}$.  We previously showed that there exists a path $c$ in $X(G,\alpha)$ that connects $w_{0}'$ to $\sigma \cdot w_{0}'$, since both of these vertices lie above $v_{0}$.  The path $c_{1} \cdot c \cdot (\sigma c_{2})$ is now a path in $X(G,\alpha)$ connecting $w_{1}$ to $w_{2}$, which concludes the proof.
\end{proof}

Under a strongly connectedness condition, the digraph $X(G,\alpha)$ gives a convenient way to construct Galois covers, as the following proposition shows.
\begin{proposition} \label{concrete_construction}
Let $X$ be a strongly connected digraph, $G$ a group, $\alpha:E_{X} \rightarrow G$ a function, and assume that $X(G,\alpha)$ is strongly connected.  Then, the cover $f:X(G,\alpha) \rightarrow X$ is a Galois cover, and one has a natural isomorphism
$$G \stackrel{\simeq}{\longrightarrow} {\rm Gal}(X(G,\alpha)/X).$$
\end{proposition}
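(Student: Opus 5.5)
The plan is to verify directly that $f:X(G,\alpha)\rightarrow X$ satisfies the definition of a Galois cover given in \cref{gr_acting_on_digraph}. Presumably that definition asks the deck group ${\rm Aut}(Y/X)$ to act transitively on each fiber $f^{-1}(v)$, or equivalently asks $\#{\rm Aut}(Y/X)$ to equal the degree of the cover. The natural candidate for the isomorphism is the injective homomorphism $\phi:G\rightarrow {\rm Aut}(X(G,\alpha)/X)$ from (\ref{iso}). So the proof reduces to showing that $\phi$ is surjective, and that the resulting transitivity on fibers is exactly what the Galois condition needs.

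First I will recheck quickly that $\phi(\tau)$ is a deck transformation. It is a digraph morphism, because $o(e,\tau\sigma)=(o(e),\tau\sigma)$ and $t(e,\tau\sigma)=(t(e),\tau\sigma\alpha(e))=\phi(\tau)(t(e),\sigma)$, where left multiplication commutes with right multiplication by $\alpha(e)$. It is bijective with inverse $\phi(\tau^{-1})$, and it satisfies $f\circ\phi(\tau)=f$. Injectivity of $\phi$ is clear from its action on $(v_0,1_G)$.

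For surjectivity, let $\sigma'\in{\rm Aut}(Y/X)$ with $Y=X(G,\alpha)$ and fix a vertex $w_0=(v_0,1_G)$. Since $\sigma'$ preserves fibers, $\sigma'(w_0)=(v_0,\tau)$ for some $\tau\in G$. Then $\phi(\tau)^{-1}\circ\sigma'$ is a deck transformation fixing $w_0$. Because $Y$ is strongly connected by hypothesis, \cref{act_special} (or directly the unique lifting property \cref{unique_lifting_prop}) shows that the deck group acts freely on $V_Y$. A deck transformation fixing a vertex is therefore trivial on vertices. It is also trivial on edges: it maps each $\varepsilon\in E^{o}_{Y,w}$ into $E^{o}_{Y,w}$, and local bijectivity of $f$ forces it to be the identity there. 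Hence $\sigma'=\phi(\tau)$, so $\phi$ is an isomorphism onto ${\rm Aut}(Y/X)$.

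Finally, the deck group acts transitively on each fiber $f^{-1}(v)=\{v\}\times G$ via left multiplication, and $\#{\rm Aut}(Y/X)=\#G$ equals the degree. This gives the Galois property in whichever equivalent form \cref{gr_acting_on_digraph} uses, for instance through \cref{one_way_corr}, identifying $X$ with $Y_G$ via $f$. Naturality is evident since $\phi$ is defined intrinsically. The main obstacle I expect is bookkeeping: making sure the appendix's definition of Galois matches the transitivity statement, and that the freeness argument correctly handles edges as well as vertices when a digraph has multiple edges. I will settle this using the local bijectivity of $f$ as above.
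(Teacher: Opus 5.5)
Your proposal is correct and follows essentially the same route as the paper: you match $\sigma'((v_0,1_G))=(v_0,\tau)$, use the unique lifting property (\cref{unique_lifting_prop}) in the strongly connected $X(G,\alpha)$ to conclude $\sigma'=\phi(\tau)$, and then read off transitivity on the fibers $\{v\}\times G$ from $\phi$. Two small points: the terminal-vertex check should read $t(\phi(\tau)(e,\sigma))=(t(e),\tau\sigma\alpha(e))=\phi(\tau)(t(e),\sigma\alpha(e))=\phi(\tau)(t(e,\sigma))$, and your separate argument on edges is redundant, since unique lifting already gives equality of the two morphisms on edges as well as vertices.
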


\begin{proof}
We claim that the injective group morphism $\phi$ from (\ref{iso}) above is surjective.  Indeed, if $\phi \in {\rm Aut}(X(G,\alpha)/X)$, then since $\phi((v,1_{G})) \in f^{-1}(v) = \{(v,\sigma): \sigma \in G \}$, we have that there exists $\sigma_{0} \in G$ such that $\phi((v,1_{G})) = (v,\sigma_{0})$.  On the other hand, $\phi(\sigma_{0})((v,1_{G})) = (v,\sigma_{0})$ as well.  Since $X(G,\alpha)$ is assumed to be strongly connected, \cref{unique_lifting_prop} shows the equality $\phi = \phi(\sigma_{0})$, from which it follows that (\ref{iso}) is a group isomorphism.  The isomorphism (\ref{iso}) can also be used to show that ${\rm Aut}(X(G,\alpha)/X)$ acts transitively on each of the fibers $f^{-1}(v)$; therefore, $X(G,\alpha)/X$ is a Galois cover and this concludes the proof.
\end{proof}

The construction above is functorial meaning that if $\pi:G \rightarrow G'$ is a group morphism, then one obtains a natural morphism of graphs
$$X(G,\alpha) \rightarrow X(G',\pi \circ \alpha),$$
given by $(v,\sigma) \mapsto (v,\pi(\sigma))$ and $(e,\sigma) \mapsto (e,\pi(\sigma))$, and we leave it to the reader to check that if $\pi$ is assumed, moreover, to be surjective, then one obtains a covering map.   

Assume now that $X$ is a strongly connected digraph, $G$ is a group, and $H$ is a subgroup of $G$.  Assume also that $\alpha: E_{X}\rightarrow G$ is any function.  The construction at the beginning of this subsection can be modified accordingly to obtain a derived digraph which we denote by $Y = X(G,H,\alpha)$.  Its sets of vertices and edges are
$$V_{Y} = V_{X} \times (H \backslash G) \text{ and } E_{Y} = E_{X} \times (H \backslash G),$$
respectively.  The incidence map is given by
$$o(e,H \sigma) = (o(e),H \sigma) \text{ and } t(e,H \sigma) = (t(e),H \sigma \alpha(e)).$$
We leave it to the reader to check that the morphism $X(G,\alpha) \rightarrow X(G,H,\alpha)$ given by
$$(v,\sigma) \mapsto (v,H \sigma) \text{ and } (e,\sigma) \mapsto (e,H \sigma) $$
is a covering map, as well as the morphism $X(G,H,\alpha) \rightarrow X$ given by
$$(v,H \sigma) \mapsto v \text{ and } (e,H \sigma) \mapsto e. $$
In other words, the digraph $X(G,H,\alpha)$, with the covering maps $X(G,\alpha) \rightarrow X(G,H,\alpha)$ and $X(G,H,\alpha) \rightarrow X$, is an intermediate cover of $X(G,\alpha) \rightarrow X$.

\begin{remark}
Just as the digraph $X(G,\alpha)$ can be viewed as a slight variant and generalization of the usual notion of Cayley graph, the digraph $X(G,H,\alpha)$ can be viewed as a slight variant and generalization of the notion of Schreier coset graph in combinatorial group theory, but adapted to the category of digraphs.
\end{remark}

The proof of the following proposition is routine and left to the reader.  
\begin{proposition} \label{identification}
Let $X$ be a strongly connected digraph.  Let also $G$ be a group, $H$ a subgroup of $G$ and as above let $\alpha:E_{X}\rightarrow G$ be a voltage assignment.  There is a natural isomorphism
\begin{equation} \label{compatible_two_versions}
X(G,\alpha)_{H} \stackrel{\simeq}{\longrightarrow} X(G,H,\alpha)
\end{equation}
of digraphs that makes both intermediate covers $X(G,\alpha)_{H}$ and $X(G,H,\alpha)$ of $X(G,\alpha)/X$ equivalent ones.  If moreover $H \trianglelefteq G$, then in fact 
$$X(G,H,\alpha) = X(\Gamma,\pi \circ \alpha),$$
where here $\pi:G \rightarrow \Gamma = G/H$ is the natural projection map.
\end{proposition}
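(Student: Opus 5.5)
The plan is to write down the isomorphism explicitly and check that it is compatible with the incidence maps and with both covering maps to $X$ and from $X(G,\alpha)$. Since $G$ acts on $X(G,\alpha)$ by left multiplication, $\phi(\tau)(v,\sigma)=(v,\tau\sigma)$, the $H$-orbit of a vertex $(v,\sigma)$ is $\{v\}\times H\sigma$. Similarly, the $H$-orbit of an edge $(e,\sigma)$ is $\{e\}\times H\sigma$. So the candidate map (\ref{compatible_two_versions}) is
$$H\cdot(v,\sigma)\mapsto (v,H\sigma), \qquad H\cdot(e,\sigma)\mapsto (e,H\sigma).$$
This is well defined and bijective on vertices and on edges, because the orbits are exactly the sets $\{v\}\times H\sigma$ (respectively $\{e\}\times H\sigma$).

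Next I will check that this map is a morphism of digraphs. The incidence maps on $X(G,\alpha)_{H}$ are induced from those of $X(G,\alpha)$. The orbit of $(e,\sigma)$ has origin the orbit of $(o(e),\sigma)$, which maps to $(o(e),H\sigma)$. It has terminus the orbit of $(t(e),\sigma\alpha(e))$, which maps to $(t(e),H\sigma\alpha(e))$. These agree with the incidence map of $X(G,H,\alpha)$. A bijective morphism is an isomorphism, since the inverse is automatically a morphism.

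For equivalence of intermediate covers, the two triangles must commute. Composing $X(G,\alpha)\to X(G,\alpha)_{H}$ with our map sends $(v,\sigma)$ to $(v,H\sigma)$, which is exactly the covering map $X(G,\alpha)\to X(G,H,\alpha)$. Likewise $(v,H\sigma)\mapsto v$ agrees with the induced map $X(G,\alpha)_{H}\to X$, and the same holds on edges.

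For the normal case, take $H\trianglelefteq G$. Then $H\backslash G=G/H=\Gamma$ with $H\sigma=\pi(\sigma)$, so $V_Y=V_X\times\Gamma$ and $E_Y=E_X\times\Gamma$. The terminus becomes $t(e,\pi(\sigma))=(t(e),\pi(\sigma)\pi(\alpha(e)))$, which is the incidence map of $X(\Gamma,\pi\circ\alpha)$, so the two digraphs coincide literally.

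No step is a real obstacle. The only point needing care is that the coset convention $H\sigma$ (right cosets, from the left action) is used consistently. This is what makes the terminus $H\sigma\alpha(e)$ well defined: replacing $\sigma$ by $h\sigma$ gives $Hh\sigma\alpha(e)=H\sigma\alpha(e)$.
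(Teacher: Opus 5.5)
Your proof is correct and is exactly the routine verification that the paper omits (it only says the proof is ``routine and left to the reader''): the orbit map $H\cdot(v,\sigma)\mapsto(v,H\sigma)$, the incidence check using $Hh\sigma\alpha(e)=H\sigma\alpha(e)$, compatibility with the covering maps, and the literal identification $H\backslash G=G/H$ in the normal case. The paper's notion of equivalence only requires $\varphi\circ q_1=q_2$, so your check of the second triangle is an extra; it is harmless and shows $X(G,\alpha)_H\to X$ is a cover without assuming $X(G,\alpha)$ is strongly connected.
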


Note that when one applies (\ref{compatible_two_versions}) to the case where $H = G$, one gets a digraph isomorphism
$$X(G,\alpha)_{G} \stackrel{\simeq}{\longrightarrow} X.$$

%\subsection{Equivariant Artin--Mazur zeta function and voltage assignments}
\subsection{The equivariant Artin--Mazur zeta function and voltage assignments}
In this subsection, we let $X$ be a strongly connected finite digraph, $G$ a finite abelian group, and we let $\alpha:E_{X} \rightarrow G$ be a voltage assignment.  We fix a labeling
$$V_{X} = \{v_{1},\ldots,v_{n} \}$$
of the vertices of $X$.  Moreover, for simplicity we let $Y = X(G,\alpha)$, and we assume that $Y$ is strongly connected so that we have a Galois cover $Y/X$ by \cref{concrete_construction}.

\begin{proposition}\label{explicit_adjacency_op}
With the notation as above, for each $i=1,\ldots,n$, let $w_{i} = (v_{i},1_{G}) \in V_{Y}$.  Then, for each $j=1,\ldots,n$, one has
$$\mathcal{A}_{Y}(w_{j}) = \sum_{i=1}^{n} \lambda_{ij}w_{i},$$
where
$$\lambda_{ij} = \sum_{\substack{e \in E_{X} \\ {\rm inc}(e) = (v_{j},v_{i})}} \alpha(e) \in \mathbb{Z}[G].$$
\end{proposition}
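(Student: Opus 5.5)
The plan is to compute $\mathcal{A}_{Y}(w_{j})$ straight from the definitions and then regroup the terms using the $\mathbb{Z}[G]$-module structure on $\mathbb{Z}V_{Y}$.

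First, I determine the edges of $Y = X(G,\alpha)$ that start at $w_{j} = (v_{j},1_{G})$. Since $o(e,\sigma) = (o(e),\sigma)$, we have $o(e,\sigma) = w_{j}$ exactly when $o(e) = v_{j}$ and $\sigma = 1_{G}$. So $e \mapsto (e,1_{G})$ is a bijection
$$\{e \in E_{X} : o(e) = v_{j}\} \longrightarrow E^{o}_{Y,w_{j}}.$$
This is just local bijectivity of the cover, written out concretely. The terminal vertex of $(e,1_{G})$ is $(t(e),\alpha(e))$. Hence
$$\mathcal{A}_{Y}(w_{j}) = \sum_{e \in E_{X,v_{j}}^{o}} (t(e),\alpha(e)).$$

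Next, I identify $(t(e),\alpha(e))$ with $\alpha(e)\cdot w_{i}$, where $t(e) = v_{i}$. The left action of $\tau \in G$ is $\phi(\tau)(v,\sigma) = (v,\tau\sigma)$, so $\alpha(e)\cdot(v_{i},1_{G}) = (v_{i},\alpha(e))$, as required. This matches how the paper regards $\mathbb{Z}V_{Y}$ as a free left $\mathbb{Z}[G]$-module with basis $w_{1},\ldots,w_{n}$.

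Finally, I partition the edges starting at $v_{j}$ according to their terminal vertex $v_{i}$. That is, I split the sum over $e$ with ${\rm inc}(e) = (v_{j},v_{i})$ for $i = 1,\ldots,n$, and pull out the common factor $w_{i}$ in each piece using $\mathbb{Z}[G]$-linearity. This gives
$$\mathcal{A}_{Y}(w_{j}) = \sum_{i=1}^{n}\Big(\sum_{{\rm inc}(e)=(v_{j},v_{i})}\alpha(e)\Big) w_{i} = \sum_{i=1}^{n}\lambda_{ij}w_{i}.$$

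There is no real obstacle here. The only point needing care is the convention in the second step: the action of $G$ is by left multiplication on the second coordinate, and the edge $(e,\sigma)$ ends at $(t(e),\sigma\alpha(e))$. These must be matched so that $(v_{i},\alpha(e)) = \alpha(e)\cdot w_{i}$ and not $\alpha(e)^{-1}\cdot w_{i}$; with $\sigma = 1_{G}$ the two descriptions agree. Since $G$ is abelian, the order of factors would not matter in any case.
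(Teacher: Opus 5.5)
Your proposal is correct and follows the paper's proof essentially verbatim: identify $E^{o}_{Y,w_j}$ with $\{(e,1_G) : e \in E^{o}_{X,v_j}\}$, rewrite $(t(e),\alpha(e))$ as $\alpha(e)\cdot(t(e),1_G)$, and group the terms by terminal vertex to obtain $\lambda_{ij}$. Your remark about matching the left-multiplication action with the incidence convention is a correct clarification of a step the paper leaves implicit.
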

\begin{proof}
We calculate
\begin{equation*}
\mathcal{A}_{Y}(w_{j}) = \sum_{e \in E_{X,v_{j}}^{o}} t((e,1_{G})) 
= \sum_{e \in E_{X,v_{j}}^{o}} (t(e),\alpha(e)) 
= \sum_{e \in E_{X,v_{j}}^{o}} \alpha(e) (t(e),1_{G}) 
= \sum_{i=1}^{n} \lambda_{ij} w_{i},
\end{equation*}
as we wanted to show.
\end{proof}

As a consequence, we obtain an explicit description of the equivariant zeta function of an abelian Galois cover $X(G,\alpha)/X$ arising from a voltage assignment $\alpha:E_{X} \rightarrow G$.

\begin{corollary}
Let $X$ be a strongly connected finite digraph and let $\alpha:E_{X} \rightarrow G$ be a voltage assignment with values in a finite abelian group $G$.  Assume that $X(G,\alpha)$ is strongly connected, so that we have an abelian Galois cover $X(G,\alpha)/X$, and for simplicity let $Y = X(G,\alpha)$.  Consider the matrix
$$A_{\alpha} = (\lambda_{ij}) \in M_{n}(\mathbb{Z}[G]),$$
where
$$\lambda_{ij} = \sum_{\substack{e \in E_{X} \\ {\rm inc}(e) = (v_{j},v_{i})}} \alpha(e) \in \mathbb{Z}[G].$$
Then, one has
$$\gamma_{Y/X}(u) = {\rm det}(I - A_{\alpha} u) \in 1 + u \mathbb{Z}[G][u].$$
\end{corollary}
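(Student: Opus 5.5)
The plan is to compute the determinant of $\mathcal{I}_{Y} - \mathcal{A}_{Y}u$ over $\mathbb{Z}[G][u]$ in an explicit basis, using \cref{explicit_adjacency_op}. First, because the action of $G$ on $V_{Y} = V_{X} \times G$ is free, the module $\mathbb{Z}V_{Y}$ is free over $\mathbb{Z}[G]$. A basis is given by the vertices $w_{i} = (v_{i},1_{G})$ for $i=1,\ldots,n$. Indeed, $\phi(\sigma)(w_{i}) = (v_{i},\sigma)$, so every vertex of $Y$ is uniquely of the form $\sigma \cdot w_{i}$. Consequently $(w_{1},\ldots,w_{n})$ is also a basis of $\mathbb{Z}V_{Y}[u]$ over $\mathbb{Z}[G][u]$.

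Next I will identify the matrix of $\mathcal{A}_{Y}$ in this basis. The map $\mathcal{A}_{Y}$ is $\mathbb{Z}[G]$-linear, as established in the subsection on the Bowen--Franks group as a Galois module. \cref{explicit_adjacency_op} gives $\mathcal{A}_{Y}(w_{j}) = \sum_{i} \lambda_{ij} w_{i}$, so the $j$-th column of the matrix of $\mathcal{A}_{Y}$ is $(\lambda_{1j},\ldots,\lambda_{nj})^{T}$. The matrix is therefore exactly $A_{\alpha} = (\lambda_{ij})$. One small point needs checking: in the proof of that proposition, $(t(e),\alpha(e))$ is rewritten as $\alpha(e)\cdot(t(e),1_{G})$. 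This uses $\phi(\tau)(v,\sigma) = (v,\tau\sigma)$ with $\sigma = 1_{G}$, so it is consistent with the left action.

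Since $u$ is central, $\mathcal{I}_{Y} - \mathcal{A}_{Y}u$ is represented by $I - A_{\alpha}u \in M_{n}(\mathbb{Z}[G][u])$. The ring $\mathbb{Z}[G][u]$ is commutative, so the determinant of an endomorphism of a free module of finite rank is the determinant of any representing matrix and does not depend on the basis. Then \cref{equivariant_def_zeta} gives $\gamma_{Y/X}(u) = \det(I - A_{\alpha}u)$. Expanding the determinant shows the constant term is $1$, so the result lies in $1 + u\mathbb{Z}[G][u]$.

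I expect no real obstacle here. The only care needed is with conventions: columns versus rows, so that the matrix is $A_{\alpha}$ rather than its transpose, and the freeness and basis claim. The transpose question is harmless anyway, since $\det(I - A_{\alpha}^{T}u) = \det(I - A_{\alpha}u)$.
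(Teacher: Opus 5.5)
Your proposal is correct and matches the paper's proof, which simply combines \cref{equivariant_def_zeta} with \cref{explicit_adjacency_op}: $A_{\alpha}$ is the matrix of $\mathcal{A}_{Y}$ in the $\mathbb{Z}[G]$-basis $w_{i} = (v_{i},1_{G})$ of $\mathbb{Z}V_{Y}$. Your additional checks make explicit what the paper leaves implicit: the freeness of that basis, the identity $(t(e),\alpha(e)) = \alpha(e)\cdot(t(e),1_{G})$ for the left action, basis-independence of the determinant over the commutative ring $\mathbb{Z}[G][u]$, and the constant term being $1$.
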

\begin{proof}
This follows directly from \cref{equivariant_def_zeta} and \cref{explicit_adjacency_op}.
\end{proof}

In the special case where $X$ is a bouquet digraph (meaning a digraph with a single vertex), one obtains
$$\gamma_{Y/X}(u) = 1 - \sum_{e \in E_{X}} \alpha(e)u \in 1+ u\mathbb{Z}[G][u],$$
and thus
\begin{equation} \label{what_we_need}
\gamma_{Y/X}(1) = 1 - \sum_{e \in E_{X}} \alpha(e) \in \mathbb{Z}[G].
\end{equation}

%\subsection{Cayley--Serre digraphs}
\subsection{Cayley--Serre digraphs}
For (undirected) graphs, the Galois covers of bouquet graphs are called Cayley--Serre graphs (see \cite[page 69]{Sunada:2013}).  Similarly, we shall call the Galois covers of bouquet digraphs (i.e. a digraph with a single vertex) Cayley--Serre digraphs.  Examples of such digraphs can be constructed using voltage assignments on bouquet digraphs as explained in \cref{volt_assignment}.

\begin{proposition} \label{delta_zero_cs}
Let $X = B_{\delta}$ be a bouquet digraph with $\delta$ edges and let $\alpha:E_{X} \rightarrow G$ be a voltage assignment for some finite abelian group $G$.  Let $Y=X(G,\alpha)$ and assume that $Y$ is strongly connected.  For simplicity, write $\mathcal{A}$ instead of $\mathcal{A}_{Y}$.  Let $F$ be an algebraically closed field of characteristic zero.  Then, the base change $\mathcal{A}_{F}:FV_{Y} \rightarrow FV_{Y}$ is diagonalizable and in particular, one has $\delta(Y) = 0$. 
\end{proposition}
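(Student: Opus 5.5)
By \cref{explicit_adjacency_op}, since $X$ has a single vertex $v_{1}$, the space $FV_{Y}$ is a free $F[G]$-module of rank one generated by $w_{1} = (v_{1},1_{G})$. In this identification $FV_{Y} \simeq F[G]$, the adjacency operator $\mathcal{A}_{F}$ is multiplication by the group ring element
$$\lambda = \sum_{e \in E_{X}} \alpha(e) \in \mathbb{Z}[G] \subseteq F[G].$$
Indeed, $\mathcal{A}_{Y}(w_{1}) = \lambda w_{1}$, and $\mathcal{A}_{Y}$ is $\mathbb{Z}[G]$-linear, so $\mathcal{A}_{Y}(\mu w_{1}) = \mu\lambda w_{1}$ for all $\mu \in F[G]$. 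Here we use that $G$ is abelian, so there is no left/right ambiguity in the multiplication.

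The plan is to diagonalize multiplication by $\lambda$ using the idempotents of $F[G]$. Since $F$ is algebraically closed of characteristic zero and $G$ is finite abelian, $F[G]$ is semisimple and commutative. The idempotents $e_{\psi} = \frac{1}{\#G}\sum_{\sigma\in G}\psi(\sigma)\sigma^{-1}$, for $\psi \in \widehat{G}(F)$, are pairwise orthogonal, sum to $1$, and each spans a one-dimensional subspace $Fe_{\psi}$. This gives $F[G] = \bigoplus_{\psi} F e_{\psi}$. The key identity is $\sigma e_{\psi} = \psi(\sigma) e_{\psi}$, which is checked by reindexing the sum. It yields $\lambda e_{\psi} = \psi(\lambda) e_{\psi}$, so the basis $\{e_{\psi} w_{1}\}_{\psi}$ of $FV_{Y}$ consists of eigenvectors of $\mathcal{A}_{F}$. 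Hence $\mathcal{A}_{F}$ is diagonalizable, with eigenvalues $\psi(\lambda) = \sum_{e}\psi(\alpha(e))$.

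Diagonalizability immediately gives $\alpha_{\mathcal{A}_{F}}(1) = \beta_{\mathcal{A}_{F}}(1)$. By \cref{ord_of_va}, this means $r_{Y} = {\rm rank}_{\mathbb{Z}}{\rm BF}(Y)$, so $\delta(Y) = 0$. That proposition is stated for a field of characteristic zero, and we apply it with the given algebraically closed $F$. If needed, note that the rank of ${\rm BF}(Y)$ equals $\dim_{F}\ker(\mathcal{BF}_{F})$ by flatness of $F$ over $\mathbb{Z}$, and this kernel is the eigenspace of $\mathcal{A}_{F}$ for the eigenvalue $1$.

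There is no real obstacle. The only points requiring care are the identification of $\mathcal{A}_{F}$ with the multiplication map in the rank-one case, and the fact that $\#G$ is invertible in $F$ and $F$ contains the values of all characters. Both follow from the hypotheses on $F$. Strong connectivity of $Y$ is used only so that the earlier definitions of $r_{Y}$ and $\delta(Y)$ apply.
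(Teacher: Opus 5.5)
Your proof is correct and takes essentially the same approach as the paper: identify $\mathcal{A}_{F}$ with multiplication by $\sum_{e \in E_{X}}\alpha(e)$ on the rank-one module generated by $w_{1}$, observe that $\{e_{\psi}w_{1} : \psi \in \widehat{G}(F)\}$ is an eigenbasis, and deduce $\delta(Y)=0$ from \cref{ord_of_va}. You also supply the details the paper leaves implicit, namely that $\sigma e_{\psi} = \psi(\sigma)e_{\psi}$ and that the $e_{\psi}$ form a basis of $F[G]$.
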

\begin{proof}
Let us denote the unique vertex of $X$ by $v_{1}$.  \cref{explicit_adjacency_op} shows that
$$\mathcal{A}_{F}(w_{1}) = \left( \sum_{e \in E_{X}} \alpha(e)\right) w_{1},$$
where $w_{1} = (v_{1},1_{G})$, from which it follows that for any $\psi \in \widehat{G}(F)$, one has
$$\mathcal{A}_{F}(e_{\psi}w_{1}) = \left(\sum_{e \in E_{X}}\psi(\alpha(e)) \right) e_{\psi}w_{1}.$$
Therefore, the $F$-basis $\{e_{\psi}w_{1}: \psi \in \widehat{G}(F) \}$ of $FV_{Y}$ consists of eigenvectors for $\mathcal{A}_{F}$.  The operator $\mathcal{A}_{F}$ is thus diagonalizable and in particular one has $\delta(Y) = 0$ as desired.
\end{proof}

%\section{Bowen--Franks groups and minus class groups}
\section{Bowen--Franks groups and minus class groups} \label{section:special_cover}
Throughout this section, let $\Delta = {\rm Gal}(K/\mathbb{Q}) \simeq \mathbb{F}_{p}^{\times}$, where $K = \mathbb{Q}(\zeta_{p})$ and $p$ is a fixed odd rational prime.  Let $j \in \Delta$ be the unique element of order $2$.  We write $\Delta^{+} = \Delta/\langle j \rangle$.  If $F$ is an algebraically closed field of characteristic zero, we will say that a character $\psi \in \widehat{\Delta}(F)$ is even if $\psi(j)=1$ and it is odd if $\psi(j) = -1$. 

%\subsection{Generalized Bernoulli numbers}
\subsection{Generalized Bernoulli numbers}

\begin{comment}
\DV{I need to modify the proposition below slightly to handle all rational primes $\ell \nmid p-1$.  I also need to define the Bernoulli numbers at some point.}
\begin{proposition} \label{bernoulli}
Let $i$ be an integer satisfying $1 \le i \le p-1$.  If $i$ is even, one has $B_{1,\omega^{-i}} = 0$.  If $i$ is odd, then $B_{1,\omega^{-i}} \neq 0$, and in particular
$$pB_{1,\omega^{-1}} \equiv p-1 \not \equiv 0 \pmod{p}.$$
\end{proposition}
\begin{proof}
\DV{Do that.  All in Washington.}
\end{proof}
\end{comment}
Consider the Stickelberger element
$$\theta = -\frac{1}{p}\sum_{i=1}^{p-1} i \sigma_{i}^{-1} \in \mathbb{Q}[\Delta]$$
from (\ref{stickelberger}).
\begin{definition}
Let $F$ be an algebraically closed field of characteristic zero, and let $\psi \in \widehat{\Delta}(F)$ be a non-trivial character.  The corresponding generalized Bernoulli $F$-number is defined to be
$$B_{1,\psi} = -\psi^{-1}(\theta) = \frac{1}{p} \sum_{i=1}^{p-1} i \psi(\sigma_{i}) \in F.$$
\end{definition}
We will make use of the following well-known result.
\begin{proposition} \label{bernoulli}
Let $F$ be an algebraically closed field of characteristic zero.  If $\psi \in \widehat{\Delta}(F)$ is a non-trivial even character, then $B_{1,\psi} = 0$, whereas if $\psi \in \widehat{\Delta}(F)$ is odd, then $B_{1,\psi} \neq 0$.  Moreover, if $\omega \in \widehat{\Delta}(\mathbb{C}_{p}) \subseteq \widehat{\Delta}(\mathbb{Z}_{p})$ is the Teichm\"{u}ller character, then
$$pB_{1,\omega^{-1}} \equiv p-1 \not \equiv 0 \pmod{p}.$$
\end{proposition}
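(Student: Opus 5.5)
The plan is to prove the three assertions of \cref{bernoulli} directly from the definition
$$B_{1,\psi} = \frac{1}{p}\sum_{i=1}^{p-1} i\,\psi(\sigma_i).$$
Everything is elementary except the non-vanishing for odd characters, which needs an analytic input.

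\emph{Even characters.} Suppose $\psi$ is non-trivial and even. The involution $j$ is $\sigma_{p-1} = \sigma_{-1}$, so $\psi(\sigma_{p-i}) = \psi(\sigma_i)$. Reindex the sum by $i \mapsto p-i$ and average the two expressions. This gives
$$2B_{1,\psi} = \frac{1}{p}\sum_{i=1}^{p-1} p\,\psi(\sigma_i) = \sum_{i=1}^{p-1}\psi(\sigma_i).$$
The right-hand side vanishes by orthogonality, since $\psi$ is non-trivial.

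\emph{Odd characters.} This is the main obstacle. Since $B_{1,\psi}$ is an algebraic expression in the values of $\psi$, which are $(p-1)$-th roots of unity, I would fix a field embedding of $\mathbb{Q}(\mu_{p-1})$ into $\mathbb{C}$. This reduces the claim to the case $F=\mathbb{C}$, because non-vanishing is preserved under field embeddings. Over $\mathbb{C}$, the identity $L(0,\psi) = -B_{1,\psi}$ from the introduction applies. The functional equation relates $L(0,\psi)$ to $L(1,\overline{\psi})$ by a nonzero factor; for odd $\psi$ this factor involves the Gauss sum and $\pi$. So it suffices to know that $L(1,\overline{\psi}) \neq 0$, which is Dirichlet's theorem. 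Alternatively, one can cite the standard formula $L(1,\chi) = \pi i\,\tau(\chi)\,p^{-1}B_{1,\overline{\chi}}$ for odd $\chi$ (Washington, Theorem 4.9). Either way the step reduces to Dirichlet's non-vanishing theorem, which I would quote rather than reprove.

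\emph{Teichm\"uller congruence.} Here $\omega(\sigma_i) \equiv i \pmod{p}$. Hence
$$pB_{1,\omega^{-1}} = \sum_{i=1}^{p-1} i\,\omega^{-1}(\sigma_i) \equiv \sum_{i=1}^{p-1} i\cdot i^{-1} = p-1 \pmod{p\mathbb{Z}_p},$$
and $p-1$ is a unit modulo $p$, which gives the last assertion.
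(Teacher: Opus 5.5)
Your proof is correct. The even case and the Teichm\"uller congruence match the paper. The paper phrases the even case as the group-ring identity $\theta + j\theta = -N_{\Delta}$, which is exactly your reindexing $i \mapsto p-i$. For the congruence it simply cites Washington, whose argument is your one-line computation.

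The genuine difference is the odd case. The paper deduces $B_{1,\psi}\neq 0$ from the minus class number formula $h^{-}(K) = 2p\prod_{\psi \text{ odd}}\bigl(-\tfrac12 B_{1,\psi^{-1}}\bigr)$ together with $h^{-}(K)\geq 1$. This gives non-vanishing of all odd factors at once, and it is economical because the same formula is used again in the proof of \cref{goal}. You instead argue character by character: $L(0,\psi) = -B_{1,\psi}$, then the functional equation (or the formula $L(1,\chi) = \pi i\,\tau(\chi)p^{-1}B_{1,\overline{\chi}}$ for odd $\chi$), then Dirichlet's theorem $L(1,\overline{\psi})\neq 0$. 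Your route avoids the regulator and unit-index bookkeeping behind the class number formula. The analytic input is of comparable depth, though, since the minus class number formula is itself derived from exactly these values $L(1,\chi)$. Your explicit reduction from an arbitrary algebraically closed $F$ to $\mathbb{C}$, via an embedding of $\mathbb{Q}(\mu_{p-1})$ that preserves oddness and non-vanishing, also makes precise a step the paper leaves implicit.
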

\begin{proof}
The assertion on non-trivial even characters follows from the identity $\theta + j\theta = -N_{\Delta}$, where $N_{\Delta} = \sum_{\sigma \in \Delta} \sigma \in \mathbb{Z}[\Delta]$.  The assertion on odd characters follows from the class number formula on the minus side (\cite[Theorem 4.17]{Washington:1997}).  The final assertion on the Teichm\"{u}ller character is explained in  \cite[Page $101$]{Washington:1997}.
\end{proof}

%\subsection{A special Galois cover of digraphs}
\subsection{A special Galois cover of digraphs} \label{a_special_cover}
Our goal in this subsection is to construct a Galois cover $Y/X$ of digraphs with Galois group $\Delta$ for which
\begin{equation} \label{key}
\gamma_{Y/X}(1) = p \theta ,
\end{equation}
where $p\theta$ is the element in $\mathbb{Z}[\Delta]$ defined in (\ref{integralized_stick}).  We let
$$\delta = \frac{p-1}{2} p + 1 $$
and consider the bouquet digraph $X = B_{\delta}$ with $\delta$ loops.  Let us label the $\delta$ edges of $X$ as follows
$$E_{X} = \{e_{0},e_{1,1}, e_{2,1}, e_{2,2}, e_{3,1}, e_{3,2}, e_{3,3}, \ldots, e_{p-1,1},\ldots, e_{p-1,p-1}\}. $$
We define a voltage assignment $\alpha: E_{X} \rightarrow \Delta$ via   
$$e_{0} \mapsto \alpha(e_{0}) = \sigma_{1}^{-1} = \sigma_{1} \text{ and } e_{i,j} \mapsto \alpha(e_{i,j}) = \sigma_{i}^{-1}, $$
and we let $Y$ be the derived digraph $X(\Delta,\alpha)$.  \cref{connectedness} shows that $Y$ is strongly connected, and \cref{concrete_construction} implies that $Y/X$ is a Galois cover of digraphs with Galois group isomorphic to $\Delta$.  Then (\ref{what_we_need}) gives
\begin{equation} \label{stickelberger_from_digraph}
\gamma_{Y/X}(1) = - \sum_{i=1}^{p-1}i\sigma_{i}^{-1} = p \theta \in \mathbb{Z}[\Delta],
\end{equation}
so the Galois cover $Y/X$ satisfies (\ref{key}).  Moreover, \cref{delta_zero_cs} shows that $\delta(Y) = 0$ (and clearly $\delta(X) = 0$ as well).

For instance, for $p=3$, the Galois cover $X(\Delta,\alpha) \rightarrow X$ is the $2$-to-$1$ cover given by
\begin{equation*}
\begin{tikzpicture} [baseline={([yshift=-0.7ex] current bounding box.center)}]
%vertices
\draw[fill=black] (0,0) circle (1pt);
\draw[fill=black] (1,0) circle (1pt);

%edges
\draw (0,0) edge [decoration={markings, mark= at position 0.44 with {\arrow[xscale=-1]{stealth}}},preaction={decorate},bend left=20] (1,0);
\draw (0,0) edge [decoration={markings, mark= at position 0.58 with {\arrow{stealth}}},preaction={decorate}, bend right=20] (1,0);
\draw (0,0) edge [decoration={markings, mark= at position 0.44 with {\arrow[xscale=-1]{stealth}}},preaction={decorate},bend left=50] (1,0);
\draw (0,0) edge [decoration={markings, mark= at position 0.58 with {\arrow{stealth}}},preaction={decorate}, bend right=50] (1,0);

\draw (0,0) edge [decoration={markings, mark= at position 0.38 with {\arrow{stealth}}},preaction = {decorate}, loop left, in = 155, out = 205,min distance=8mm] (0,0);
\draw (1,0) edge [decoration={markings, mark= at position 0.38 with {\arrow{stealth}}},preaction = {decorate}, loop right, in = 25, out = 335,min distance=8mm] (1,0);
\draw (0,0) edge [decoration={markings, mark= at position 0.60 with {\arrow{stealth}}},preaction = {decorate}, loop left, in = 145, out = 215,min distance=12mm] (0,0);
\draw (1,0) edge [decoration={markings, mark= at position 0.60 with {\arrow{stealth}}},preaction = {decorate}, loop right, in = 35, out = 325,min distance=12mm] (1,0);
\end{tikzpicture} 
\rightarrow
\begin{tikzpicture} [baseline={([yshift=-0.7ex] current bounding box.center)}]
%vertices
\draw[fill=black] (0,0) circle (1pt);

%edges
\draw (0,0) edge [decoration={markings, mark= at position 0.38 with {\arrow{stealth}}},preaction = {decorate}, loop left, in = 155, out = 205,min distance=8mm] (0,0);
\draw (0,0) edge [decoration={markings, mark= at position 0.60 with {\arrow{stealth}}},preaction = {decorate}, loop left, in = 145, out = 215,min distance=12mm] (0,0);
\draw (0,0) edge [decoration={markings, mark= at position 0.38 with {\arrow{stealth}}},preaction = {decorate}, loop right, in = 25, out = 335,min distance=8mm] (0,0);
\draw (0,0) edge [decoration={markings, mark= at position 0.60 with {\arrow{stealth}}},preaction = {decorate}, loop right, in = 35, out = 325,min distance=12mm] (0,0);

\end{tikzpicture} 
\end{equation*}

For $p=5$, one obtains a digraph $X(\Delta,\alpha)$ that is a Galois $4$-to-$1$ cover of a bouquet digraph with $11$ loops and whose Galois group is cyclic of order four.  The digraph $X(\Delta,\alpha)$ is given by  
\begin{equation*}
\begin{tikzpicture}[scale=1.5]
%vertices
\draw[fill=black] (0,0) circle (1pt);
\draw[fill=black] (2,0) circle (1pt);
\draw[fill=black] (0,2) circle (1pt);
\draw[fill=black] (2,2) circle (1pt);

%edges
\draw (0,0) edge [decoration={markings, mark= at position 0.38 with {\arrow{stealth}}},preaction = {decorate}, loop left, in = 155, out = 205,min distance=8mm] (0,0);
\draw (0,0) edge [decoration={markings, mark= at position 0.60 with {\arrow{stealth}}},preaction = {decorate}, loop left, in = 145, out = 215,min distance=12mm] (0,0);
\draw (0,2) edge [decoration={markings, mark= at position 0.38 with {\arrow{stealth}}},preaction = {decorate}, loop left, in = 155, out = 205,min distance=8mm] (0,2);
\draw (0,2) edge [decoration={markings, mark= at position 0.60 with {\arrow{stealth}}},preaction = {decorate}, loop left, in = 145, out = 215,min distance=12mm] (0,2);

\draw (2,0) edge [decoration={markings, mark= at position 0.38 with {\arrow{stealth}}},preaction = {decorate}, loop right, in = 25, out = 335,min distance=8mm] (2,0);
\draw (2,2) edge [decoration={markings, mark= at position 0.38 with {\arrow{stealth}}},preaction = {decorate}, loop right, in = 25, out = 335,min distance=8mm] (2,2);
\draw (2,0) edge [decoration={markings, mark= at position 0.60 with {\arrow{stealth}}},preaction = {decorate}, loop right, in = 35, out = 325,min distance=12mm] (2,0);
\draw (2,2) edge [decoration={markings, mark= at position 0.60 with {\arrow{stealth}}},preaction = {decorate}, loop right, in = 35, out = 325,min distance=12mm] (2,2);

%blue ones
\draw (2,2) edge [decoration={markings, mark= at position 0.58 with {\arrow{stealth}}},preaction={decorate}, bend right=8] (0,2);
\draw (0,2) edge [decoration={markings, mark= at position 0.52 with {\arrow{stealth}}},preaction={decorate}, bend right=6] (0,0);
\draw (0,0) edge [decoration={markings, mark= at position 0.58 with {\arrow{stealth}}},preaction={decorate}, bend right=8] (2,0);
\draw (2,0) edge [decoration={markings, mark= at position 0.52 with {\arrow{stealth}}},preaction={decorate}, bend right=6] (2,2);

\draw (2,2) edge [decoration={markings, mark= at position 0.58 with {\arrow{stealth}}},preaction={decorate}, bend right=18] (0,2);
\draw (0,2) edge [decoration={markings, mark= at position 0.52 with {\arrow{stealth}}},preaction={decorate}, bend right=16] (0,0);
\draw (0,0) edge [decoration={markings, mark= at position 0.58 with {\arrow{stealth}}},preaction={decorate}, bend right=18] (2,0);
\draw (2,0) edge [decoration={markings, mark= at position 0.52 with {\arrow{stealth}}},preaction={decorate}, bend right=16] (2,2);

%red ones

\draw (0,2) edge [decoration={markings, mark= at position 0.55 with {\arrow{stealth}}},preaction={decorate}, bend right=8] (2,2);
\draw (0,0) edge [decoration={markings, mark= at position 0.52 with {\arrow{stealth}}},preaction={decorate}, bend right=6] (0,2);
\draw (2,2) edge [decoration={markings, mark= at position 0.52 with {\arrow{stealth}}},preaction={decorate}, bend right=6] (2,0);
\draw (2,0) edge [decoration={markings, mark= at position 0.55 with {\arrow{stealth}}},preaction={decorate}, bend right=8] (0,0);

\draw (0,2) edge [decoration={markings, mark= at position 0.55 with {\arrow{stealth}}},preaction={decorate}, bend right=18] (2,2);
\draw (0,0) edge [decoration={markings, mark= at position 0.52 with {\arrow{stealth}}},preaction={decorate}, bend right=16] (0,2);
\draw (2,2) edge [decoration={markings, mark= at position 0.52 with {\arrow{stealth}}},preaction={decorate}, bend right=16] (2,0);
\draw (2,0) edge [decoration={markings, mark= at position 0.55 with {\arrow{stealth}}},preaction={decorate}, bend right=18] (0,0);

\draw (0,2) edge [decoration={markings, mark= at position 0.55 with {\arrow{stealth}}},preaction={decorate}, bend right=28] (2,2);
\draw (0,0) edge [decoration={markings, mark= at position 0.52 with {\arrow{stealth}}},preaction={decorate}, bend right=24] (0,2);
\draw (2,2) edge [decoration={markings, mark= at position 0.52 with {\arrow{stealth}}},preaction={decorate}, bend right=24] (2,0);
\draw (2,0) edge [decoration={markings, mark= at position 0.55 with {\arrow{stealth}}},preaction={decorate}, bend right=28] (0,0);

%green ones

\draw (2,0) edge [decoration={markings, mark= at position 0.20 with {\arrow{stealth}}},preaction={decorate}, bend left=6] (0,2);
\draw (0,2) edge [decoration={markings, mark= at position 0.20 with {\arrow{stealth}}},preaction={decorate}, bend left=6] (2,0);
\draw (0,0) edge [decoration={markings, mark= at position 0.20 with {\arrow{stealth}}},preaction={decorate}, bend left=6] (2,2);
\draw (2,2) edge [decoration={markings, mark= at position 0.20 with {\arrow{stealth}}},preaction={decorate}, bend left=6] (0,0);

\draw (2,0) edge [decoration={markings, mark= at position 0.30 with {\arrow{stealth}}},preaction={decorate}, bend left=14] (0,2);
\draw (0,2) edge [decoration={markings, mark= at position 0.30 with {\arrow{stealth}}},preaction={decorate}, bend left=14] (2,0);
\draw (0,0) edge [decoration={markings, mark= at position 0.30 with {\arrow{stealth}}},preaction={decorate}, bend left=14] (2,2);
\draw (2,2) edge [decoration={markings, mark= at position 0.30 with {\arrow{stealth}}},preaction={decorate}, bend left=14] (0,0);

\draw (2,0) edge [decoration={markings, mark= at position 0.70 with {\arrow{stealth}}},preaction={decorate}, bend left=22] (0,2);
\draw (0,2) edge [decoration={markings, mark= at position 0.70 with {\arrow{stealth}}},preaction={decorate}, bend left=22] (2,0);
\draw (0,0) edge [decoration={markings, mark= at position 0.70 with {\arrow{stealth}}},preaction={decorate}, bend left=22] (2,2);
\draw (2,2) edge [decoration={markings, mark= at position 0.70 with {\arrow{stealth}}},preaction={decorate}, bend left=22] (0,0);

\draw (2,0) edge [decoration={markings, mark= at position 0.75 with {\arrow{stealth}}},preaction={decorate}, bend left=30] (0,2);
\draw (0,2) edge [decoration={markings, mark= at position 0.75 with {\arrow{stealth}}},preaction={decorate}, bend left=30] (2,0);
\draw (0,0) edge [decoration={markings, mark= at position 0.73 with {\arrow{stealth}}},preaction={decorate}, bend left=30] (2,2);
\draw (2,2) edge [decoration={markings, mark= at position 0.73 with {\arrow{stealth}}},preaction={decorate}, bend left=30] (0,0);

\end{tikzpicture} 
\end{equation*}

%\subsection{Cardinalities Bowen--Franks groups and minus class groups}
\subsection{Cardinalities of Bowen--Franks groups and minus class groups} \label{bf_and_cg}
Throughout this section, we let $Y/X$ be the Galois cover from \cref{a_special_cover} with Galois group $\Delta \simeq \mathbb{F}_{p}^{\times}$.  We also consider the intermediate cover $Y^{+} = Y_{\langle j \rangle}$ given by \cref{gal_cor}.  Our goal in this subsection is to formulate and prove \cref{goal} below.  Our starting point is as follows. \cref{decomp} implies, on the one hand
$$g_{Y}^{*}(1) = \prod_{\psi \in \widehat{\Delta}(\mathbb{C})} g_{Y/X}^{*}(1,\psi),$$
and, on the other hand
$$g_{Y^{+}}^{*}(1) = \prod_{\psi \in \widehat{\Delta^{+}}(\mathbb{C})} g_{Y^{+}/X}^{*}(1,\psi).$$
Now, \cref{inflation} implies the equality
$$\prod_{\psi \in \widehat{\Delta^{+}}(\mathbb{C})} g_{Y^{+}/X}^{*}(1,\psi) = \prod_{\substack{\psi \in \widehat{\Delta}(\mathbb{C}) \\ \psi \text{ even }}} g_{Y/X}^{*}(1,\psi),$$
from which it follows that
\begin{equation} \label{key_identity}
\frac{g_{Y}^{*}(1)}{g_{Y^{+}}^{*}(1)} = \prod_{\substack{\psi \in \widehat{\Delta}(\mathbb{C}) \\ \psi \text{ odd }}} g_{Y/X}^{*}(1,\psi).
\end{equation}
We shall first determine $m(Y^{+})$ and $m(Y)$ from \cref{the_special_value}.  Before doing so, we first calculate $\gamma_{Y^{+}/X}(1)$ explicitly in the following proposition.

\begin{proposition} \label{equi_spe_plus}
With the notation as above, one has
$$\gamma_{Y^{+}/X}(1) = -p \cdot N_{\Delta^{+}} \in \mathbb{Z}[\Delta^{+}],$$
where $N_{\Delta^{+}} = \sum_{\sigma \in \Delta^{+}} \sigma \in \mathbb{Z}[\Delta^{+}]$.
\end{proposition}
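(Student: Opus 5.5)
We compute $\gamma_{Y^{+}/X}(1)$ by combining the inflation property with the explicit formula for $\gamma_{Y/X}(1)$. By \cref{equiv_inf}, applied with $G = \Delta$, $H = \langle j \rangle$ and $\Gamma = \Delta^{+}$, one has $\gamma_{Y^{+}/X}(u) = \pi(\gamma_{Y/X}(u))$, where $\pi:\mathbb{Z}[\Delta][u]\rightarrow \mathbb{Z}[\Delta^{+}][u]$ is the natural projection. The proof there is stated over $F$, but everything in sight is integral, so the equality also holds in $\mathbb{Z}[\Delta^{+}][u]$. Alternatively, \cref{identification} identifies $Y^{+}$ with $X(\Delta^{+},\pi\circ\alpha)$, and (\ref{what_we_need}) applies directly to this bouquet cover. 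Either route, evaluating at $u=1$ and using (\ref{stickelberger_from_digraph}), gives
$$\gamma_{Y^{+}/X}(1) = \pi(p\theta) = -\sum_{i=1}^{p-1} i\, \pi(\sigma_{i}^{-1}).$$

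The remaining step is to group the sum by cosets of $\langle j\rangle$. Since $j = \sigma_{-1} = \sigma_{p-1}$, we have $\sigma_{p-i} = j\sigma_{i}$, and hence $\pi(\sigma_{p-i}^{-1}) = \pi(\sigma_{i}^{-1})$. Pairing $i$ with $p-i$ for $1\le i\le \frac{p-1}{2}$ gives
$$\gamma_{Y^{+}/X}(1) = -\sum_{i=1}^{(p-1)/2}\bigl(i + (p-i)\bigr)\pi(\sigma_{i}^{-1}) = -p\sum_{i=1}^{(p-1)/2}\pi(\sigma_{i}^{-1}).$$
The elements $\sigma_{i}$ with $1\le i\le \frac{p-1}{2}$ form a set of coset representatives of $\Delta/\langle j\rangle$, because $\pm 1,\ldots,\pm\frac{p-1}{2}$ exhaust $\mathbb{F}_{p}^{\times}$. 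Inversion permutes $\Delta^{+}$, so $\sum_{i=1}^{(p-1)/2}\pi(\sigma_{i}^{-1}) = N_{\Delta^{+}}$, which gives the claim.

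There is no serious obstacle. The only points to check are that inflation holds integrally, so that no base change to $F$ is needed, and that the pairing uses the identification $j=\sigma_{-1}$. This identity is the integral version of $\theta + j\theta = -N_{\Delta}$ used in the proof of \cref{bernoulli}. Indeed, projecting that identity gives $2\pi(\theta) = -2N_{\Delta^{+}}$, and multiplying $\pi(\theta) = -N_{\Delta^{+}}$ by $p$ gives the result.
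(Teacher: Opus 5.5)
Your proof is correct and follows the paper's argument: apply the inflation property (\cref{equiv_inf}) to get $\gamma_{Y^{+}/X}(1)=\pi(p\theta)$, then pair $i$ with $p-i$ using $\sigma_{p-i}=j\sigma_{i}$ to collapse the sum to $-pN_{\Delta^{+}}$. Your side remarks are valid but go beyond what the paper uses: that inflation descends from $F$ to $\mathbb{Z}$ via the injection $\mathbb{Z}[\Delta^{+}][u]\hookrightarrow F[\Delta^{+}][u]$, and that the identity is the projection of $\theta+j\theta=-N_{\Delta}$.
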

\begin{proof}
Let $\pi:\mathbb{Z}[\Delta] \rightarrow \mathbb{Z}[\Delta^{+}]$ be the natural projection map.  \cref{equiv_inf} shows that
$$\gamma_{Y^{+}/X}(1) = \pi\left(\gamma_{Y/X}(1) \right) = \pi(p \theta).$$
Now, one has
\begin{equation*}
\begin{aligned}
\pi(p \theta) &= - \sum_{i=1}^{\frac{p-1}{2}} i \pi(\sigma_{i}^{-1}) - \sum_{i = \frac{p+1}{2}}^{p-1}i \pi(\sigma_{i}^{-1}) \\
&= - \left(\sum_{i=1}^{\frac{p-1}{2}} i \pi(\sigma_{i}^{-1}) + \sum_{i=\frac{p+1}{2}}^{p-1} i \pi(j \sigma_{-i}^{-1}) \right) \\
&= - \left(\sum_{i=1}^{\frac{p-1}{2}} i \pi(\sigma_{i}^{-1}) + \sum_{i=1}^{\frac{p-1}{2}} (p-i) \pi(j \sigma_{i-p}^{-1})\right) \\
&= -p N_{\Delta^{+}},
\end{aligned}
\end{equation*}
as desired. 
\end{proof}

\cref{equi_spe_plus} allows us to describe the structure of ${\rm BF}(Y^{+})$ as a $\mathbb{Z}$-module. 
\begin{proposition} \label{bf_plus}
With the notation as above, one has an isomorphism
$${\rm BF}(Y^{+}) \simeq \mathbb{Z}^{\frac{p-3}{2}} \oplus \mathbb{Z}/p\mathbb{Z} $$
of $\mathbb{Z}$-modules.
\end{proposition}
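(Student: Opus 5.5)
Write $Y^{+} = X(\Delta^{+},\pi\circ\alpha)$ using \cref{identification}, so $Y^{+}$ has $\frac{p-1}{2}$ vertices and $\mathbb{Z}V_{Y^{+}}$ is free of rank one over $\mathbb{Z}[\Delta^{+}]$, generated by $w_{1} = (v_{1},1)$. The plan is to identify the Bowen--Franks operator of $Y^{+}$ with a multiplication map on the group ring and then compute its cokernel directly.

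\textbf{Step 1: the operator.} By \cref{explicit_adjacency_op} applied to the bouquet $X$, the operator $\mathcal{A}_{Y^{+}}$ is multiplication by $\sum_{e}\pi(\alpha(e))$. Hence $\mathcal{BF}_{Y^{+}}$ is multiplication by
\[
\beta = 1 - \sum_{e\in E_{X}}\pi(\alpha(e)) = \gamma_{Y^{+}/X}(1).
\]
By \cref{equi_spe_plus}, $\beta = -pN_{\Delta^{+}}$. Therefore
\[
{\rm BF}(Y^{+}) \simeq \mathbb{Z}[\Delta^{+}]/(pN_{\Delta^{+}})
\]
as $\mathbb{Z}[\Delta^{+}]$-modules, hence as abelian groups.

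\textbf{Step 2: the cokernel.} For $x \in \mathbb{Z}[\Delta^{+}]$ we have $xN_{\Delta^{+}} = \varepsilon(x)N_{\Delta^{+}}$, where $\varepsilon$ is the augmentation. So the ideal $(pN_{\Delta^{+}})$ equals $p\mathbb{Z}\cdot N_{\Delta^{+}}$, a rank one subgroup. Choose the $\mathbb{Z}$-basis of $\mathbb{Z}[\Delta^{+}]$ consisting of $N_{\Delta^{+}}$ together with all group elements except the identity. This is a basis because the change of basis from the group elements is unimodular. In this basis the quotient by $p\mathbb{Z}\,N_{\Delta^{+}}$ is visibly $\mathbb{Z}/p\mathbb{Z}\oplus\mathbb{Z}^{\frac{p-1}{2}-1}$, and $\frac{p-1}{2}-1 = \frac{p-3}{2}$, as claimed.

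\textbf{Consistency checks.} The free rank $\frac{p-3}{2}$ should match $r_{Y^{+}}$ via \cref{delta_zero_cs} and \cref{ord_of_va}. Indeed, the eigenvalue $1$ of $\mathcal{A}_{F}$ occurs exactly for the characters $\psi$ of $\Delta^{+}$ with $\psi(\beta)=0$. These are the nontrivial characters, since $\psi_{0}(\beta) = -p\cdot\frac{p-1}{2}\neq 0$.

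\textbf{Main obstacle.} The only genuine point is justifying that $\mathcal{BF}_{Y^{+}}$ is multiplication by $\gamma_{Y^{+}/X}(1)$ on the rank one free module. This means checking that \cref{explicit_adjacency_op} applies to the quotient cover $Y^{+}/X$, using \cref{identification} with $\langle j\rangle$ normal in the abelian group $\Delta$. Alternatively one can use \cref{equiv_inf} together with the definition of $\gamma$ as a $1\times 1$ determinant. After that the argument is elementary linear algebra.
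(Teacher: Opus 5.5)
Your proof is correct and follows the paper's argument. You identify ${\rm BF}(Y^{+})$ with $\mathbb{Z}[\Delta^{+}]/(pN_{\Delta^{+}})$ via the rank-one free presentation and \cref{equi_spe_plus}, then split off the $\mathbb{Z}/p\mathbb{Z}$ coming from the line $\mathbb{Z}N_{\Delta^{+}}$; the only cosmetic difference is that you exhibit the splitting with an explicit basis containing $N_{\Delta^{+}}$, whereas the paper uses the short exact sequence whose quotient $\mathbb{Z}[\Delta^{+}]/N_{\Delta^{+}}\mathbb{Z}[\Delta^{+}]$ is torsion-free.
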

\begin{proof}
We start with the finite free presentation
$$\mathbb{Z}V_{Y^{+}} \stackrel{\mathcal{BF}_{Y^{+}}}{\longrightarrow} \mathbb{Z}V_{Y^{+}} \rightarrow {\rm BF}(Y^{+}) \rightarrow 0$$
of ${\rm BF}(Y^{+})$ as $\mathbb{Z}[\Delta^{+}]$-modules given by (\ref{finite_free_presentation}).  Note that in our current setting, $\mathbb{Z}V_{Y^{+}}$ is a free $\mathbb{Z}[\Delta^{+}]$-module of rank $1$.  It follows that 
$$\mathcal{BF}_{Y^{+}}(D) = \gamma_{Y^{+}/X}(1) \cdot D $$
for all $D \in \mathbb{Z}V_{Y^{+}}$.  Therefore,
$${\rm BF}(Y^{+}) \simeq \mathbb{Z}[\Delta^{+}]/(\gamma_{Y^{+}/X}(1)),$$
as $\mathbb{Z}[\Delta^{+}]$-modules.  Using \cref{equi_spe_plus}, we consider now the short exact sequence
\begin{equation} \label{imp_ses}
0 \rightarrow N_{\Delta^{+}} \mathbb{Z}[\Delta^{+}]/(\gamma_{Y^{+}/X}(1)) \rightarrow \mathbb{Z}[\Delta^{+}]/(\gamma_{Y^{+}/X}(1)) \rightarrow \mathbb{Z}[\Delta^{+}]/N_{\Delta^{+}}\mathbb{Z}[\Delta^{+}] \rightarrow 0
\end{equation}
of $\mathbb{Z}[\Delta^{+}]$-modules.  As $\mathbb{Z}$-modules, we have 
$$N_{\Delta^{+}} \mathbb{Z}[\Delta^{+}]/(\gamma_{Y^{+}/X}(1)) \simeq \mathbb{Z}/p\mathbb{Z},$$
because of \cref{equi_spe_plus}.  Moreover, the term on the right of the short exact sequence (\ref{imp_ses}) does not have $\mathbb{Z}$-torsion, and thus is a free $\mathbb{Z}$-module of finite rank $(p-3)/2$.  It follows that the short exact sequence (\ref{imp_ses}) splits in the category of $\mathbb{Z}$-modules, and the result follows.
\end{proof}

We can now understand the special value $g_{Y^{+}}^{*}(1)$ completely.

\begin{corollary} \label{plus_part}
With the notation as above, one has
$$m(Y^{+}) = (-1)^{\frac{p-1}{2}} \cdot \frac{p-1}{2} \text{ and } \#{\rm BF}(Y^{+})_{\rm tors} = p,$$
and
$$g_{Y^{+}}^{*}(1) = (-1)^{\frac{p-1}{2}} \cdot \frac{p-1}{2} \cdot p.$$
\end{corollary}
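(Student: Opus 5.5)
The plan is to compute $g_{Y^{+}}^{*}(1)$ directly from the Artin factorization, and then read off $m(Y^{+})$ from \cref{the_special_value}. The torsion part is already known from \cref{bf_plus}: $\#{\rm BF}(Y^{+})_{\rm tors} = p$. Note that \cref{the_special_value} applies here, since $\delta(Y^{+})=0$. This follows from \cref{delta_zero_cs}, because $Y^{+}$ is itself a Cayley--Serre digraph. Indeed, by \cref{identification}, $Y^{+} \simeq X(\Delta^{+},\pi\circ\alpha)$, and it is strongly connected by \cref{connectedness}.

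First, I evaluate each character factor at $u=1$. Since $X$ is a bouquet, $\gamma_{Y^{+}/X}(u) = 1 - \sum_{e}\pi(\alpha(e))u$ is linear in $u$, with coefficients in $\mathbb{Z}[\Delta^{+}]$. Hence for each $\psi \in \widehat{\Delta^{+}}(\mathbb{C})$,
$$g_{Y^{+}/X}(u,\psi) = 1 - c_\psi u, \qquad c_\psi = \sum_{e}\psi(\pi(\alpha(e))).$$
By \cref{equi_spe_plus}, we have $1-c_\psi = \psi(-pN_{\Delta^{+}})$. For nontrivial $\psi$ this equals $0$, so $c_\psi = 1$ and $g_{Y^{+}/X}(u,\psi) = 1-u$, giving $g^{*}_{Y^{+}/X}(1,\psi) = -1$. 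For the trivial character, $c_{\psi_0} = \delta = \tfrac{p-1}{2}p+1$, so $g_{X}(1) = 1-\delta = -p\cdot\tfrac{p-1}{2}$, which is nonzero. Thus $r_X = 0$.

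Next I multiply, using \cref{decomp} over the $\tfrac{p-1}{2}$ characters of $\Delta^{+}$:
$$g_{Y^{+}}^{*}(1) = \left(-p\tfrac{p-1}{2}\right)(-1)^{\frac{p-3}{2}} = (-1)^{\frac{p-1}{2}}\tfrac{p-1}{2}\,p.$$
Dividing by $\#{\rm BF}(Y^{+})_{\rm tors}=p$ gives $m(Y^{+}) = (-1)^{\frac{p-1}{2}}\tfrac{p-1}{2}$.

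The only mildly delicate point is the sign convention for $g^{*}(1)$. A factor $1-u = -(u-1)$ contributes $-1$, which is consistent with \cref{zeta_explicit}. One should also double-check that $r_{Y^{+}} = \tfrac{p-3}{2}$ matches the rank in \cref{bf_plus}, which it does.
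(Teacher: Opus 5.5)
Your proof is correct and is essentially the paper's argument. The paper reads off the eigenvalues $\psi(\gamma_{Y^{+}/X}(1)) = \psi(-pN_{\Delta^{+}})$ of $\mathcal{BF}_{\mathbb{C}}$ from \cref{equi_spe_plus} and applies \cref{zeta_explicit} to get $g_{Y^{+}}(u) = (1-u)^{\frac{p-3}{2}}\bigl(1-(1+\tfrac{p-1}{2}p)u\bigr)$; you package the same character-by-character computation through the Artin factorization of \cref{decomp}, and both arguments then finish with \cref{identification,delta_zero_cs}, \cref{the_special_value} and \cref{bf_plus} exactly as you do.
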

\begin{proof}
The eigenvalues of the operator $\mathcal{BF}_{\mathbb{C}}:\mathbb{C}V_{Y^{+}} \rightarrow \mathbb{C}V_{Y^{+}}$ are given by $\psi(\gamma_{Y^{+}/X}(1))$ as $\psi$ runs over the $\mathbb{C}$-valued characters of $\Delta^{+}$.  Therefore, \cref{equi_spe_plus} implies that $0$ is an eigenvalue of $\mathcal{BF}_{\mathbb{C}}$ with algebraic multiplicity $(p-3)/2$ and that the eigenvalue
$$\psi_{0} \left( \gamma_{Y^{+}/X}(1) \right) = - \frac{p-1}{2} p$$
has algebraic multiplicity $1$.  Now, \cref{zeta_explicit} implies that
$$g_{Y^{+}}(u) = (1-u)^{\frac{p-3}{2}} \cdot \left(1 - \left( 1+ \frac{p-1}{2}p \right)u \right),$$ 
and also
$$g_{Y^{+}}^{*}(1) = (-1)^{\frac{p-1}{2}} \cdot \frac{p-1}{2} \cdot p. $$
Combining with \cref{identification,delta_zero_cs}, \cref{the_special_value}, and \cref{bf_plus} gives the desired result. 
\end{proof}

For the digraph $Y$, the integer $m(Y)$ from \cref{the_special_value} can now be calculated explicitly.
\begin{proposition}\label{thm:m(Y)}
With the notation as above, one has
    \[|m(Y)|= 2^{\frac{p-3}{2}} \cdot\frac{p-1}{2}.\]
\end{proposition}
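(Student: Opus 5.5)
The plan is to compute $|m(Y)|$ as the cardinality $\#\mathcal{R}(Y)$ from \cref{m_as_card}, which applies because $\delta(Y)=0$ by \cref{delta_zero_cs}. I identify $\mathbb{Z}V_{Y}$ with $\mathbb{Z}[\Delta]$ via $w_1=(v_1,1_\Delta)$. By \cref{explicit_adjacency_op} and (\ref{stickelberger_from_digraph}), $\mathcal{BF}_Y$ is then multiplication by $p\theta$.

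First I describe the Fitting decomposition of $\mathbb{Q}[\Delta]$ under multiplication by $p\theta$. A character $\psi$ gives eigenvalue $\psi(p\theta)$, which \cref{bernoulli} controls. It is nonzero for odd $\psi$, and equals $-p\frac{p-1}{2}\neq 0$ for $\psi_0$. It vanishes for nontrivial even $\psi$, using $\theta+j\theta=-N_\Delta$.

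Hence $W=e\,\mathbb{Q}[\Delta]$ and $\ker=e'\mathbb{Q}[\Delta]$, where $e=\frac{N_\Delta}{p-1}+\frac{1-j}{2}$ and $e'=1-e=\frac{1+j}{2}-\frac{N_\Delta}{p-1}$. Set $K=\mathbb{Z}[\Delta]\cap e'\mathbb{Q}[\Delta]$, the integral kernel. Since $p\theta$ is injective on $\mathcal{L}(Y)$ and kills exactly $K$ inside $\mathbb{Z}[\Delta]$, I get
$$\#\mathcal{R}(Y)=[p\theta\mathbb{Z}[\Delta]:p\theta\mathcal{L}(Y)]=[\mathbb{Z}[\Delta]:\mathcal{L}(Y)\oplus K].$$
Projecting along $\mathcal{L}(Y)\otimes\mathbb{Q}$ onto $e'\mathbb{Q}[\Delta]$ gives
$$\mathbb{Z}[\Delta]/(\mathcal{L}(Y)\oplus K)\simeq e'\mathbb{Z}[\Delta]/K.$$
This holds because the kernel of $x\mapsto e'x$ on $\mathbb{Z}[\Delta]$ is exactly $\mathcal{L}(Y)$, and $K\subseteq e'\mathbb{Z}[\Delta]$ since $e'$ acts as the identity on $K$.

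Next I transport everything to $\mathbb{Q}[\Delta^+]$. The $j$-invariant part $\mathbb{Q}[\Delta]^{\langle j\rangle}$ is identified with $\mathbb{Q}[\Delta^+]$ via $(1+j)y\mapsto\pi(y)$. The integral $j$-invariants are exactly the elements $(1+j)y$ with $y\in\mathbb{Z}[\Delta]$, because $\Delta$ acts freely. Therefore $K$ corresponds to the augmentation ideal $I^+\subseteq\mathbb{Z}[\Delta^+]$.

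For $x\in\mathbb{Z}[\Delta]$ write $a=\pi(x)$, so that $\mathrm{aug}(a)=\mathrm{aug}(x)$. Then $e'x$ corresponds to
$$\tfrac12\bigl(a-\tfrac{\mathrm{aug}(a)}{n}N_{\Delta^+}\bigr),\qquad n=\tfrac{p-1}{2},$$
and $a$ ranges over all of $\mathbb{Z}[\Delta^+]$. So $e'\mathbb{Z}[\Delta]$ corresponds to $\tfrac12 P$, where $P$ is the orthogonal projection of $\mathbb{Z}^n$ onto the sum-zero hyperplane, i.e. the dual lattice $A_{n-1}^{*}$.

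Finally I compute the index:
$$[\tfrac12P:I^+]=[\tfrac12P:P]\,[P:I^+]=2^{n-1}\cdot n.$$
Here $[\tfrac12 P:P]=2^{\mathrm{rank}}$ with rank $n-1=\frac{p-3}{2}$. The standard fact $[A_{n-1}^*:A_{n-1}]=n$ can be checked directly: $P/I^+$ is cyclic, generated by the image of $\sigma$, whose $n$-th multiple $n\sigma-N_{\Delta^+}$ lies in $I^+$, and no smaller multiple does. This yields $|m(Y)|=2^{\frac{p-3}{2}}\cdot\frac{p-1}{2}$.

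The main obstacle is making the identification $\#\mathcal{R}(Y)=[e'\mathbb{Z}[\Delta]:K]$ airtight. Concretely, one must check that $\mathcal{L}(Y)$ is precisely the kernel of $x\mapsto e'x$ on $\mathbb{Z}[\Delta]$, and that the factor-of-$2$ bookkeeping in the isomorphism $\mathbb{Q}[\Delta]^{\langle j\rangle}\simeq\mathbb{Q}[\Delta^+]$ is correct. As a sanity check, $p=3$ gives $n=1$ and $|m(Y)|=1$, consistent with \cref{equi_spe_plus}.
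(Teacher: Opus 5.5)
Your proof is correct, but it takes a different route from the paper's. You compute $\#\mathcal{R}(Y)$ directly from \cref{m_as_card}. The complementary idempotents $e=e_{\psi_0}+e^-$ and $e'=e^+-e_{\psi_0}$ turn it into the index $[e'\mathbb{Z}[\Delta]:K]$ of the integral kernel inside the projection of the lattice onto the kernel. Under $\mathbb{Q}[\Delta]^{\langle j\rangle}\simeq\mathbb{Q}[\Delta^+]$ this becomes $[\tfrac12 A_{n-1}^*:A_{n-1}]$. The factor $\tfrac12$ in $e^+$ gives $2^{n-1}$, and $[A_{n-1}^*:A_{n-1}]=n$ gives $\frac{p-1}{2}$.

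The paper's main proof never computes $\mathcal{R}(Y)$. It applies the snake lemma to $0\to(1-j)\mathbb{Z}V_Y\to\mathbb{Z}V_Y\to\mathbb{Z}V_{Y^+}\to 0$. Using $r_Y=r_{Y^+}$ and \cref{divi_pro}, this gives an exact sequence linking $\mathrm{coker}(\mathcal{BF}_Y^-)$, ${\rm BF}(Y)_{\rm tors}$ and ${\rm BF}(Y^+)_{\rm tors}$. It then shows $f(\ker\mathcal{BF}_Y)=2\ker\mathcal{BF}_{Y^+}$, which is where $2^{(p-3)/2}$ comes from, and computes $\#\mathrm{coker}(\mathcal{BF}_Y^-)=|\prod_{\psi\text{ odd}}\psi(p\theta)|$. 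Comparing with \eqref{key_identity} and \cref{spe_decomp} yields $|m(Y)|/|m(Y^+)|=2^{(p-3)/2}$. The factor $\frac{p-1}{2}$ is then imported from \cref{plus_part}, which rests on ${\rm BF}(Y^+)\simeq\mathbb{Z}^{(p-3)/2}\oplus\mathbb{Z}/p\mathbb{Z}$.

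What your route buys:
\begin{itemize}
\item It is self-contained and needs no information about ${\rm BF}(Y^+)$ and no analytic factorization; only the vanishing pattern of $\psi(p\theta)$ enters.
\item It shows that $|m(Y)|$ depends only on how $\ker$ and $\mathrm{Im}$ of $\mathcal{BF}_{\mathbb{Q}}$ sit relative to $\mathbb{Z}V_Y$, not on the nonzero eigenvalues.
\item Your reduction $\#\mathcal{R}(X)=[\mathbb{Z}V_X:\mathcal{L}(X)\oplus(\mathbb{Z}V_X\cap\ker\mathcal{BF}_{\mathbb{Q}})]$ holds for every $X$ with $\delta(X)=0$.
\end{itemize}

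What the paper's route buys is the exact sequence relating ${\rm BF}(Y)_{\rm tors}$ and ${\rm BF}(Y^+)_{\rm tors}$, which is structural information in its own right.

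Your argument is closest in spirit to the circulant-matrix proof in \cref{app:alternative}, which also computes $\#\mathcal{R}(Y)$ directly. Your idempotent and lattice computation avoids choosing a generator of $\Delta$ and evaluating the resultant of $g$ and $k$.
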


\begin{proof}
We have a natural commutative diagram
\[
\begin{tikzcd}
0\arrow[r]&(1-j)\mathbb{Z}V_{Y}\arrow[r]\arrow[d,"\mathcal{BF}_{Y}^{-}"]&\mathbb{Z}V_{Y}\arrow[r,"f"]\arrow[d,"\mathcal{BF}_{Y}"]&\mathbb{Z}V_{Y^{+}}\arrow[r]\arrow[d,"\mathcal{BF}_{Y^{+}}"]&0\\
0\arrow[r]&(1-j)\mathbb{Z}V_{Y}\arrow[r]&\mathbb{Z}V_{Y}\arrow[r,"f"]&\mathbb{Z}V_{Y^{+}}\arrow[r]&0,
\end{tikzcd}
\]
where $\mathcal{BF}_{Y}^{-}$ is the restriction of the Bowen--Franks operator $\mathcal{BF}_{Y}$ to $(1-j)\mathbb{Z}V_{Y}$, and the two right horizontal maps are the natural projection map $f:\mathbb{Z}V_{Y} \rightarrow \mathbb{Z}V_{Y^{+}}$.  Note that since $\mathbb{Z}V_{Y}$ is a free $\mathbb{Z}[\Delta]$-module of rank one, the Bowen--Franks operator $\mathcal{BF}_{Y}$ is simply given by
\begin{equation}
\label{eq}
    \mathcal{BF}_{Y}(D) = \gamma_{Y/X}(1)\cdot D = p\theta \cdot D,
\end{equation}
for all $D \in \mathbb{Z}V_{Y}$.  

For what follows, we view $\mathbb{Z}V_{Y} \subseteq \mathbb{C}V_{Y}$. By \eqref{eq}, the eigenvalues of $\mathcal{BF}_{Y}^{-}$ are given by $\psi(p\theta)$ for odd characters $\psi$. By \cref{bernoulli}, zero is not an eigenvalue of  $\mathcal{BF}_{Y}^{-}$.  %If $D \in {\rm ker}(\mathcal{BF}_{Y}^{-}) = {\rm ker}(\mathcal{BF}_{Y}) \cap (1-j)\mathbb{Z}V_{Y}$, then for an odd character $\psi \in \widehat{\Delta}(\mathbb{C})$, we have $0 = e_{\psi} p\theta  D = p \psi(\theta) e_{\psi} D$,
%and \cref{bernoulli}  implies that $e_{\psi}D = 0$, and for an even character $\psi \in \widehat{\Delta}(\mathbb{C})$, since $jD = -D$, we get $2e_{\psi}D = 0$, from which it follows that $e_{\psi}D=0$ as well.
As $(1-j)\Z V_Y$ is a free $\Z$-module, $\mathcal{BF}_{Y}^{-}$ is injective. 
The snake lemma gives an exact sequence
$$0 \rightarrow {\rm ker}(\mathcal{BF}_{Y^{+}})/f({\rm ker}(\mathcal{BF}_{Y})) \rightarrow {\rm coker}(\mathcal{BF}_{Y}^{-}) \rightarrow {\rm BF}(Y) \rightarrow {\rm BF}(Y^{+}) \rightarrow 0. $$
Since $r_{Y}=r_{Y^{+}} = (p-3)/2$, \cref{divi_pro} gives in fact an exact sequence
\begin{equation} \label{unn}
0 \rightarrow {\rm ker}(\mathcal{BF}_{Y^{+}})/f({\rm ker}(\mathcal{BF}_{Y})) \rightarrow {\rm coker}(\mathcal{BF}_{Y}^{-}) \rightarrow {\rm BF}(Y)_{\rm tors} \rightarrow {\rm BF}(Y^{+})_{\rm tors} \rightarrow 0 
\end{equation}
of finite abelian groups, and we will now determine the cardinality of the first two.

To determine the cardinality of ${\rm ker}(\mathcal{BF}_{Y^{+}})/f({\rm ker}(\mathcal{BF}_{Y}))$, label the vertices of $Y^{+}$, say $V_{Y^{+}} = \{v_{1},\ldots,v_{\frac{p-1}{2}} \}$, and for each $i = 1,\ldots (p-1)/2$, fix one of the vertices of $Y$ lying above $v_{i}$ and denote it by $w_{i}$.  Consider now the $(p-3)/2$ elements
$$D_{i} = w_{i} + jw_{i} - (w_{1} + jw_{1}) \in \mathbb{Z}V_{Y}$$
for $i = 2,\ldots,(p-1)/2$.  Note that since $\Delta$ acts transitively on the vertices of $Y$, there exists $\sigma_{i} \in \Delta$ such that $\sigma_{i}w_{1} = w_{i}$.  Thus, we have
$$D_{i} = (\sigma_{i}-1)(1+j)w_{1}$$
for $i=2,\ldots,(p-1)/2$.  A character-by-character argument similar to the one used to show the injectivity of $\mathcal{BF}_{Y}^{-}$ above shows that $D_{i} \in {\rm ker}(\mathcal{BF}_{Y})$ for all $i=2,\ldots,(p-1)/2$.  One verifies as well that the $D_{i}$ are $\mathbb{Z}$-linearly independent.  Consider the two free $\mathbb{Z}$-submodules of $\mathbb{Z}V_{Y}$ given by
$$M = \bigoplus_{i=2}^{\frac{p-1}{2}}\mathbb{Z}D_{i} \text{ and } N = \bigoplus_{i=2}^{\frac{p-1}{2}}\mathbb{Z}w_{i}.$$
The natural projection map $\pi:\mathbb{Z}V_{Y} \rightarrow N$ admits as a splitting the group morphism $\iota:N \rightarrow \mathbb{Z}V_{Y}$ given by $w_{i} \mapsto \iota(w_{i}) = D_{i}$.  Since $\iota(N) = M$, one gets a direct sum decomposition $\mathbb{Z}V_{Y} = {\rm ker}(\pi) \oplus M$, from which it follows that $M$ is saturated in $\mathbb{Z}V_{Y}$.  Since $M \subseteq {\rm ker}(\mathcal{BF}_{Y})$ and both $M$ and ${\rm ker}(\mathcal{BF}_{Y})$ have the same rank $(p-3)/2$ over $\mathbb{Z}$, it follows that $M = {\rm ker}(\mathcal{BF}_{Y})$.  Now, note that $f(D_{i}) = 2(v_{i} - v_{1})$, and that the elements $v_{i}-v_{1}$ form a $\mathbb{Z}$-basis for ${\rm ker}(\mathcal{BF}_{Y^{+}})$ by \cref{equi_spe_plus}.  Therefore, $f({\rm ker}(\mathcal{BF}_{Y})) = 2 {\rm ker}(\mathcal{BF}_{Y^{+}})$, and we obtain
\begin{equation} \label{deuxx}
[{\rm ker}(\mathcal{BF}_{Y^{+}}):f({\rm ker}(\mathcal{BF}_{Y})] = 2^{\frac{p-3}{2}}.
\end{equation}

Since $\mathcal{BF}_{Y}^{-}$ is injective, the cardinality of ${\rm coker}(\mathcal{BF}_{Y}^{-})$ is given by the absolute value of its determinant, which after a base change to $\mathbb{C}$ is given by
\begin{equation} \label{troiss}
\# {\rm coker}(\mathcal{BF}_{Y}^{-}) = \Big|\prod_{\substack{\psi \in \widehat{\Delta}(\mathbb{C}) \\ \psi \text{ odd }}} \psi(p\theta) \Big|.
\end{equation}
Putting (\ref{unn}), (\ref{deuxx}), (\ref{troiss}) together and using \cref{spe_decomp}, one gets
$$2^{p-3} \frac{\# {\rm BF}(Y)_{\rm tors}}{\# {\rm BF}(Y^{+})_{\rm tors}} =  \Big|\prod_{\substack{\psi \in \widehat{\Delta}(\mathbb{C}) \\ \psi \text{ odd }}} \psi(p\theta) \Big| = \Big|\frac{g_{Y}^{*}(1)}{g_{Y^{+}}^{*}(1)} \Big| = \frac{|m(Y)|\# {\rm BF}(Y)_{\rm tors}}{|m(Y^{+})|\# {\rm BF}(Y^{+})_{\rm tors}}.$$
Combined with \cref{plus_part}, it follows that
$$|m(Y)| = 2^{p-3} \cdot \frac{p-1}{2}.$$

\end{proof}

We are now in a position to formulate and prove our main result of this subsection.
\begin{theorem} \label{goal}
Let $Y/X$ be the Galois cover from \cref{a_special_cover} with Galois group $\Delta$.  Then, one has
$$\#{\rm BF}(Y)_{\rm tors} = p^{\frac{p-1}{2}} \cdot h^{-}(K),$$
where $h^{-}(K) = \# {\rm Cl}^{-}(K)$ is the minus class number of $K$.
\end{theorem}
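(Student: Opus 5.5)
The plan is to read off $\#{\rm BF}(Y)_{\rm tors}$ from the four-term exact sequence (\ref{unn}) built in the proof of \cref{thm:m(Y)}, and then to evaluate the one remaining term with the analytic class number formula on the minus side. By (\ref{unn}), multiplicativity of cardinalities in an exact sequence of finite abelian groups gives
$$\#{\rm BF}(Y)_{\rm tors} = \frac{\#{\rm coker}(\mathcal{BF}_{Y}^{-}) \cdot \#{\rm BF}(Y^{+})_{\rm tors}}{[{\rm ker}(\mathcal{BF}_{Y^{+}}):f({\rm ker}(\mathcal{BF}_{Y}))]}.$$
Into this I will substitute three facts established earlier:
\begin{itemize}
\item $\#{\rm BF}(Y^{+})_{\rm tors} = p$, by \cref{plus_part};
\item the index in the denominator equals $2^{\frac{p-3}{2}}$, by (\ref{deuxx});
\item $\#{\rm coker}(\mathcal{BF}_{Y}^{-}) = \bigl|\prod_{\psi \text{ odd}} \psi(p\theta)\bigr|$, by (\ref{troiss}).
\end{itemize}
I work directly from (\ref{unn}) rather than through the final formula for $|m(Y)|$, so as not to depend on how the powers of $2$ were bookkeept at the end of that proof.

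Next I rewrite the odd-character product in terms of Bernoulli numbers. Each factor satisfies $\psi(p\theta) = -p\,B_{1,\psi^{-1}}$ by the definition of $B_{1,\psi}$. Moreover $\psi \mapsto \psi^{-1}$ permutes the $\frac{p-1}{2}$ odd characters. Hence
$$\#{\rm coker}(\mathcal{BF}_{Y}^{-}) = p^{\frac{p-1}{2}} \prod_{\chi \text{ odd}} |B_{1,\chi}|.$$

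The main input, and the only non-formal step, is the minus class number formula \cite[Theorem 4.17]{Washington:1997}:
$$h^{-}(K) = Q\, w \prod_{\chi \text{ odd}} \left(-\tfrac{1}{2}B_{1,\chi}\right).$$
For $K = \mathbb{Q}(\zeta_{p})$ one has $Q = 1$ and $w = 2p$. Taking absolute values gives
$$\prod_{\chi \text{ odd}}|B_{1,\chi}| = \frac{2^{\frac{p-1}{2}}\, h^{-}(K)}{2p},$$
and therefore
$$\#{\rm coker}(\mathcal{BF}_{Y}^{-}) = 2^{\frac{p-3}{2}}\, p^{\frac{p-3}{2}}\, h^{-}(K).$$
The point needing care here is getting $Q$ and $w$ right (Hasse's unit index is $1$ for prime-power cyclotomic fields, and $K$ contains exactly $2p$ roots of unity).

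Finally I combine everything:
$$\#{\rm BF}(Y)_{\rm tors} = \frac{2^{\frac{p-3}{2}}\, p^{\frac{p-3}{2}}\, h^{-}(K) \cdot p}{2^{\frac{p-3}{2}}} = p^{\frac{p-1}{2}}\, h^{-}(K),$$
which is the claimed identity.
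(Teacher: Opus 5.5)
Your argument is correct, and it streamlines the paper's proof rather than reproducing it.

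The paper proves \cref{goal} through special values. It writes $g_{Y}^{*}(1)/g_{Y^{+}}^{*}(1)$ via (\ref{key_identity}) as $p^{\frac{p-1}{2}}\prod_{\psi \text{ odd}}(-B_{1,\psi^{-1}})$ and converts this into $p^{\frac{p-3}{2}}2^{\frac{p-3}{2}}h^{-}(K)$ with the minus class number formula. It then divides out $m(Y)$, $m(Y^{+})$ and $\#{\rm BF}(Y^{+})_{\rm tors}$ using \cref{the_special_value}, \cref{plus_part} and \cref{thm:m(Y)}.

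But $|m(Y)|$ was itself extracted from (\ref{unn}), (\ref{deuxx}) and (\ref{troiss}) via the same relation between special values and $m\cdot\#{\rm BF}_{\rm tors}$. So the paper uses that relation once to solve for $|m(Y)|$ and then inverts it to solve for $\#{\rm BF}(Y)_{\rm tors}$. You short-circuit this by feeding $\#{\rm coker}(\mathcal{BF}_{Y}^{-})$ directly into the class number formula.

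\emph{What each route buys.} Your route needs neither the Artin formalism nor the integers $m(Y)$, $m(Y^{+})$. Its only plus-side input is $\#{\rm BF}(Y^{+})_{\rm tors}=p$, which really comes from \cref{bf_plus}. The paper's route gives the conceptual framing ``special value $= m\cdot\#{\rm BF}_{\rm tors}$'' as an analogue of the analytic class number formula. It also yields the by-product $m(Y)=(-1)^{\frac{p-1}{2}}2^{\frac{p-3}{2}}\frac{p-1}{2}$.

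\emph{Powers of $2$.} Your caution here is justified. The last displays in the proof of \cref{thm:m(Y)} write $2^{p-3}$ where (\ref{deuxx}) gives $2^{\frac{p-3}{2}}$. The stated value of $|m(Y)|$, rederived in \cref{app:alternative}, is nonetheless the correct one. Your bookkeeping uses the right factor. For $p=5$ one has $\#{\rm coker}(\mathcal{BF}_{Y}^{-})=10$, so your formula gives $10\cdot 5/2=25=p^{\frac{p-1}{2}}h^{-}(K)$, whereas a factor $2^{p-3}=4$ would not even give an integer.
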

\begin{proof}
Starting with (\ref{key_identity}), we have
\begin{equation} \label{gt}
\frac{g_{Y}^{*}(1)}{g_{Y^{+}}^{*}(1)} = p^{\frac{p-1}{2}} \prod_{\substack{\psi \in \widehat{\Delta}(\mathbb{C}) \\ \psi \text{ odd }}} (-B_{1,\psi^{-1}}). 
\end{equation}
The classical minus class number formula (\cite[Theorem 4.17]{Washington:1997} and \cite[Corollary 4.13]{Washington:1997} and see also \cite{Hasse:2019}) gives
$$h^{-}(K) = 2p \prod_{\substack{\psi \in \widehat{\Delta}(\mathbb{C}) \\ \psi \text{ odd }}}\left(-\frac{1}{2} B_{1,\psi^{-1}} \right),$$
and thus
\begin{equation} \label{nt}
2^{\frac{p-3}{2}} \cdot h^{-}(K) =p \prod_{\substack{\psi \in \widehat{\Delta}(\mathbb{C}) \\ \psi \text{ odd }}}\left(- B_{1,\psi^{-1}} \right).
\end{equation}
Putting (\ref{gt}) and (\ref{nt}) together give
$$\frac{g_{Y}^{*}(1)}{g_{Y^{+}}^{*}(1)} = p^{\frac{p-3}{2}} \cdot 2^{\frac{p-3}{2}} \cdot h^{-}(K).$$
Using \cref{plus_part} and \cref{thm:m(Y),delta_zero_cs}, one gets
$$\#{\rm BF}(Y)_{\rm tors} = p^{\frac{p-1}{2}} \cdot h^{-}(K),$$
as desired.
\end{proof}

\begin{remark}
It follows from the proof of \cref{goal} that
$$m(Y) = (-1)^{\frac{p-1}{2}} \cdot 2^{\frac{p-3}{2}} \cdot \frac{p-1}{2}.$$
\end{remark}

%\subsection{The \texorpdfstring{$\ell$}{}-part of the Bowen--Franks group when \texorpdfstring{$\ell \nmid \# \Delta$}{}}
\subsection{The \texorpdfstring{$\ell$}{}-part of the Bowen--Franks group when \texorpdfstring{$\ell \nmid \# \Delta$}{}} \label{local_analysis}
Recall that if $M$ is a finite abelian group, and $\ell$ is a rational prime, we will write $M_{\ell}$ instead of $M[\ell^{\infty}]$.  In particular, we will write
${\rm BF}_{\ell}(Y)_{\rm tors}$ and ${\rm Cl}^{-}_{\ell}(K)$ instead of ${\rm BF}(Y)_{\rm tors}[\ell^{\infty}]$ and ${\rm Cl}^{-}(K)[\ell^{\infty}]$, respectively.  More generally, if $M$ is a finitely generated abelian group, we write $M_{\ell}$ instead of $\mathbb{Z}_{\ell} \otimes_{\mathbb{Z}} M$; for instance, ${\rm BF}_{\ell}(Y)$.

Throughout this subsection, we let $Y/X$ be the Galois cover of digraphs with Galois group $\Delta$ constructed in \cref{a_special_cover}.  By taking the $p$-part of the equality of \cref{goal}, one has
\begin{equation} \label{equality_at_p}
\#{\rm BF}_{p}(Y)_{\rm tors} = p^{\frac{p-1}{2}} \cdot \#{\rm Cl}^{-}_{p}(K).
\end{equation}
Similarly, if $\ell$ is a rational prime different than $p$, then
\begin{equation} \label{equality_at_q}
\#{\rm BF}_{\ell}(Y)_{\rm tors} = \#{\rm Cl}^{-}_{\ell}(K).
\end{equation}
Let now $\ell$ be a rational prime satisfying 
\begin{equation} \label{ss_hyp}
\ell \nmid \# \Delta = p-1.
\end{equation}

As before, let $j = \sigma_{-1} \in \Delta$ be the usual complex conjugation.  Consider now the idempotents
$$e^{+} = \frac{1+j}{2}, e^{-} = \frac{1-j}{2} \in \mathbb{Z}_{\ell}[\Delta].$$
If $M$ is any $\mathbb{Z}_{\ell}[\Delta]$ module, then we define
$$M^{+} = e^{+} M \text{ and } M^{-} = e^{-} M.$$

\begin{proposition} \label{minus_vs_torsion}
With the notation as above, and for a rational prime $\ell$ satisfying (\ref{ss_hyp}), the group ${\rm BF}_{\ell}^{-}(Y)$ is finite.  If $\ell \neq p$, then 
$${\rm BF}_{\ell}(Y)_{\rm tors} = {\rm BF}_{\ell}^{-}(Y),$$
and otherwise
$${\rm BF}_{p}(Y)_{\rm tors} = {\rm BF}_{p}^{-}(Y) \oplus e_{\psi_{0}} {\rm BF}_{p}(Y).$$
Moreover, $e_{\psi_{0}} {\rm BF}_{p}(Y) \simeq \mathbb{Z}/p\mathbb{Z}$.
\end{proposition}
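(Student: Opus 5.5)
Since $\ell \nmid p-1$, the group ring $\mathcal{O}[\Delta]$ (and already $\mathbb{Z}_{\ell}[\Delta]$ for the $\pm$ splitting, since $2 \mid p-1$ forces $\ell \neq 2$) decomposes into a product of copies of $\mathcal{O}$ via the idempotents $e_{\psi}$. Tensoring the presentation (\ref{finite_free_presentation}) with $\mathbb{Z}_{\ell}$, and using that $\mathbb{Z}V_{Y}$ is free of rank one over $\mathbb{Z}[\Delta]$ with $\mathcal{BF}_{Y}$ given by multiplication by $p\theta$ as in (\ref{eq}), gives ${\rm BF}_{\ell}(Y) \simeq \mathbb{Z}_{\ell}[\Delta]/(p\theta)$. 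After base change to $\mathcal{O}$ this becomes
$$\bigoplus_{\psi} e_{\psi}{\rm BF}_{\mathcal{O}}(Y) \simeq \bigoplus_{\psi} \mathcal{O}/(\psi(p\theta)).$$
Because $\mathcal{O}$ is faithfully flat over $\mathbb{Z}_{\ell}$, finiteness and torsion questions for ${\rm BF}_{\ell}(Y)$ can be checked on this decomposition.

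The plan is then a character-by-character computation.
\begin{itemize}
\item For odd $\psi$, $\psi(p\theta) = -pB_{1,\psi^{-1}} \neq 0$ by \cref{bernoulli}, so $e_{\psi}{\rm BF}_{\mathcal{O}}(Y)$ is finite. Since $e^{-} = \sum_{\psi \text{ odd}} e_{\psi}$, this shows ${\rm BF}^{-}_{\ell}(Y)$ is finite.
\item For nontrivial even $\psi$, \cref{bernoulli} gives $\psi(p\theta)=0$, so $e_{\psi}{\rm BF}_{\mathcal{O}}(Y) \simeq \mathcal{O}$ is torsion-free.
\item For $\psi_{0}$, \cref{equi_spe_plus} (or a direct computation) gives $\psi_{0}(p\theta) = -p(p-1)/2$. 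Since $\ell \nmid p-1$, this is a unit times $p$, so $e_{\psi_{0}}{\rm BF}_{\mathcal{O}}(Y) \simeq \mathcal{O}/p\mathcal{O}$. This is zero if $\ell \neq p$, and $\mathbb{Z}_{p}/p \simeq \mathbb{Z}/p\mathbb{Z}$ if $\ell = p$, because then $\mathcal{O} = \mathbb{Z}_{p}$ as $\mu_{p-1} \subset \mathbb{Z}_{p}$.
\end{itemize}

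Reassembling, I will descend from $\mathcal{O}$ to $\mathbb{Z}_{\ell}$ by noting that $e^{\pm}$ and $e_{\psi_{0}}$ are already defined over $\mathbb{Z}_{\ell}$. This gives
$${\rm BF}_{\ell}(Y) = {\rm BF}^{-}_{\ell}(Y) \oplus e_{\psi_{0}}{\rm BF}_{\ell}(Y) \oplus (e^{+}-e_{\psi_{0}}){\rm BF}_{\ell}(Y).$$
The last summand is torsion-free: after tensoring with $\mathcal{O}$ it is a sum of copies of $\mathcal{O}$, and faithful flatness transfers this back. Hence the torsion subgroup is ${\rm BF}^{-}_{\ell}(Y) \oplus e_{\psi_{0}}{\rm BF}_{\ell}(Y)$, where the first summand is finite and the second is $0$ or $\mathbb{Z}/p\mathbb{Z}$ according as $\ell \neq p$ or $\ell = p$. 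This yields both displayed equalities and the final claim.

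The only mildly delicate point is this descent from $\mathcal{O}$ to $\mathbb{Z}_{\ell}$, that is, that torsion and finiteness are detected after the faithfully flat extension. I expect it to be routine: $\mathcal{O}$ is free of finite rank over $\mathbb{Z}_{\ell}$, so $M_{\rm tors} \otimes \mathcal{O} = (M \otimes \mathcal{O})_{\rm tors}$ for any finitely generated $\mathbb{Z}_{\ell}$-module $M$.
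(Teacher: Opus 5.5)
Your argument is correct and follows essentially the paper's route: present ${\rm BF}_{\ell}(Y)$ as $\mathbb{Z}_{\ell}[\Delta]/(p\theta)$, split by idempotents, use \cref{bernoulli} for the odd and the non-trivial even characters, and use $\psi_{0}(p\theta) = -p(p-1)/2$ for the trivial one. The only difference is that you pass to $\mathcal{O}$ and descend by faithful flatness, whereas the paper stays over $\mathbb{Z}_{\ell}$ with $e^{\pm}$ and $e_{\psi_{0}}$; there one can also see directly that $(e^{+}-e_{\psi_{0}})\,p\theta = 0$, and your explicit check that the non-trivial even part is torsion-free spells out a step the paper leaves implicit.
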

\begin{proof}
Starting with the finite free presentation (\ref{finite_free_presentation}), identifying $\mathbb{Z}V_{Y}$ with $\mathbb{Z}[\Delta]$, and tensoring with $\mathbb{Z}_{\ell}$ over $\mathbb{Z}$ gives another finite free presentation
$$\mathbb{Z}_{\ell}[\Delta] \stackrel{m}{\longrightarrow} \mathbb{Z}_{\ell}[\Delta] \rightarrow {\rm BF}_{\ell}(Y) \rightarrow 0,$$
where $m$ is the multiplication by $\gamma_{Y/X}(1)$ map.  \cref{bernoulli} implies that the image of $m$ on the minus side, namely $e^{-}\gamma_{Y/X}(1) \mathbb{Z}_{\ell}[\Delta]$, has rank $(p-1)/2$ over $\mathbb{Z}_{\ell}$ and since $\psi_{0}(p\theta) \in \mathbb{Z}_{\ell}^{\times}$ if $\ell \neq p$, we get in this case
$${\rm BF}_{\ell}(Y)_{\rm tors} = {\rm BF}_{\ell}^{-}(Y).$$
On the other hand, if $\ell = p$, then $\psi_{0}(p\theta) \in p\mathbb{Z}_{p}$, so we get
$${\rm BF}_{p}(Y)_{\rm tors} = {\rm BF}_{p}^{-}(Y) \oplus e_{\psi_{0}}{\rm BF}_{p}(Y),$$
and $e_{\psi_{0}}{\rm BF}_{p}(Y) \simeq \mathbb{Z}/p\mathbb{Z}$.
\end{proof}

Combining with both (\ref{equality_at_p}) and (\ref{equality_at_q}), one obtains the following result.
\begin{corollary}
With the same notation as above, if $\ell$ is a rational prime satisfying (\ref{ss_hyp}) such that $\ell \neq p$, then
$$\#{\rm BF}_{\ell}^{-}(Y) =  \#{\rm Cl}_{\ell}^{-}(K) \text{ and } \#{\rm BF}_{p}^{-}(Y) = p^{\frac{p-3}{2}} \cdot \#{\rm Cl}_{p}^{-}(K). $$
\end{corollary}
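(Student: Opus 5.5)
The plan is to read both equalities off \cref{minus_vs_torsion} together with the $\ell$-parts (\ref{equality_at_p}) and (\ref{equality_at_q}) of \cref{goal}; essentially no new input is needed.

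\textbf{Case $\ell \neq p$.} Here \cref{minus_vs_torsion} gives ${\rm BF}_{\ell}(Y)_{\rm tors} = {\rm BF}_{\ell}^{-}(Y)$. Taking cardinalities and inserting (\ref{equality_at_q}) yields $\#{\rm BF}_{\ell}^{-}(Y) = \#{\rm Cl}^{-}_{\ell}(K)$ at once.

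One point needs checking: (\ref{equality_at_q}) is stated with ${\rm BF}_{\ell}(Y)_{\rm tors}$ meaning ${\rm BF}(Y)_{\rm tors}[\ell^\infty]$, whereas \cref{minus_vs_torsion} uses the torsion of $\mathbb{Z}_{\ell}\otimes_{\mathbb{Z}}{\rm BF}(Y)$. These agree: ${\rm BF}(Y)$ is a finitely generated abelian group, so $\mathbb{Z}_\ell \otimes {\rm BF}(Y) \simeq \mathbb{Z}_\ell^{s} \oplus {\rm BF}(Y)_{\rm tors}[\ell^\infty]$, where $s$ is the $\mathbb{Z}$-rank of ${\rm BF}(Y)$.

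\textbf{Case $\ell = p$.} This case is allowed, since $p \nmid p-1$ and so (\ref{ss_hyp}) holds. \cref{minus_vs_torsion} gives the decomposition
$${\rm BF}_{p}(Y)_{\rm tors} = {\rm BF}_{p}^{-}(Y) \oplus e_{\psi_0}{\rm BF}_{p}(Y), \qquad e_{\psi_0}{\rm BF}_{p}(Y) \simeq \mathbb{Z}/p\mathbb{Z}.$$
Hence $\#{\rm BF}_{p}(Y)_{\rm tors} = p \cdot \#{\rm BF}_{p}^{-}(Y)$. Comparing with (\ref{equality_at_p}), namely $\#{\rm BF}_p(Y)_{\rm tors} = p^{\frac{p-1}{2}}\#{\rm Cl}_p^-(K)$, and dividing by $p$ gives $\#{\rm BF}_{p}^{-}(Y) = p^{\frac{p-3}{2}}\#{\rm Cl}^{-}_{p}(K)$.

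The only place needing care is the identification of the two meanings of ${\rm BF}_\ell(Y)_{\rm tors}$ noted above; everything else is direct substitution.
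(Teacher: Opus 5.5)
Your proposal is correct and follows the paper's route exactly: the corollary comes from combining \cref{minus_vs_torsion} with the $\ell$- and $p$-parts (\ref{equality_at_q}) and (\ref{equality_at_p}) of \cref{goal}, dividing out the $\mathbb{Z}/p\mathbb{Z}$ summand $e_{\psi_0}{\rm BF}_p(Y)$ when $\ell = p$. Your check that $(\mathbb{Z}_\ell\otimes{\rm BF}(Y))_{\rm tors}$ agrees with ${\rm BF}(Y)_{\rm tors}[\ell^\infty]$ is correct, and it makes explicit a point the paper leaves implicit.
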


Let now $\mathcal{O}$ be the valuation ring of the local field $\mathbb{Q}_{\ell}(\mu_{p-1})$.  Given $\psi \in \widehat{\Delta}(\mathbb{C}_{\ell}) \subseteq \widehat{\Delta}(\mathcal{O})$, one has the usual associated idempotent
$$e_{\psi} = \frac{1}{p-1}\sum_{\sigma \in \Delta}\psi(\sigma) \sigma^{-1} \in \mathcal{O}[\Delta].$$
Our goal now is to compare the isotypic components corresponding to odd characters of both $\mathcal{O}[\Delta]$-modules 
$${\rm BF}_{\mathcal{O}}(Y):= \mathcal{O} \otimes_{\mathbb{Z}_{\ell}} {\rm BF}_{\ell}(Y) \text{ and } {\rm Cl}^{-}_{\mathcal{O}}(K):=\mathcal{O} \otimes_{\mathbb{Z}_{\ell}} {\rm Cl}^{-}_{\ell}(K),$$
provided $\ell$ is a rational prime satisfying (\ref{ss_hyp}).

\begin{theorem} \label{key_iso1}
Consider the Galois cover $Y/X$ defined in \cref{a_special_cover}, and let $\ell$ be a rational prime satisfying (\ref{ss_hyp}).  With the notation as above, if $\psi \in \widehat{\Delta}(\mathbb{C}_{\ell}) \subseteq \widehat{\Delta}(\mathcal{O})$, then
$$e_{\psi} \cdot {\rm BF}_{\mathcal{O}}(Y) \simeq \mathcal{O}/\psi(p\theta)\mathcal{O},$$
so that in particular for an even character $\psi \in \widehat{\Delta}(\mathbb{C}_{\ell}) \subseteq \widehat{\Delta}(\mathcal{O})$ different from the trivial character, one has
$$e_{\psi} \cdot {\rm BF}_{\mathcal{O}}(Y) \simeq \mathcal{O}.$$
\end{theorem}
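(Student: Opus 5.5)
The plan is to use the cyclic presentation of ${\rm BF}(Y)$ and decompose it into isotypic pieces. By (\ref{finite_free_presentation}), and since $\mathbb{Z}V_{Y}$ is free of rank one over $\mathbb{Z}[\Delta]$ (because $X$ is a bouquet), we identify $\mathbb{Z}V_{Y}$ with $\mathbb{Z}[\Delta]$. Under this identification, (\ref{eq}) says that $\mathcal{BF}_{Y}$ is multiplication by $\gamma_{Y/X}(1) = p\theta$. Hence
$${\rm BF}(Y) \simeq \mathbb{Z}[\Delta]/p\theta\,\mathbb{Z}[\Delta]$$
as $\mathbb{Z}[\Delta]$-modules. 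Right exactness of tensor products then gives ${\rm BF}_{\mathcal{O}}(Y) \simeq \mathcal{O}[\Delta]/p\theta\,\mathcal{O}[\Delta]$. This uses $\mathcal{O}\otimes_{\mathbb{Z}_{\ell}}\mathbb{Z}_{\ell}\otimes_{\mathbb{Z}}M \simeq \mathcal{O}\otimes_{\mathbb{Z}}M$, and that $\mathcal{O}$ is flat over $\mathbb{Z}_\ell$.

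Next I use the hypothesis (\ref{ss_hyp}). Since $\ell\nmid p-1$, the element $p-1$ is a unit in $\mathcal{O}$. Since $\mathcal{O}$ contains $\mu_{p-1}$, every character $\psi\in\widehat{\Delta}(\mathcal{O})$ gives an idempotent $e_\psi\in\mathcal{O}[\Delta]$. These idempotents are orthogonal and sum to $1$, so
$$\mathcal{O}[\Delta]=\bigoplus_{\psi}e_\psi\mathcal{O}[\Delta].$$
Each summand $e_\psi\mathcal{O}[\Delta]$ is free of rank one over $\mathcal{O}$, generated by $e_\psi$. The map $e_\psi\mathcal{O}[\Delta]\to\mathcal{O}$ sending $e_\psi x\mapsto\psi(x)$ is an isomorphism, because $\sigma e_\psi=\psi(\sigma)e_\psi$. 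Applying the exact functor $M\mapsto e_\psi M$ to the presentation $\mathcal{O}[\Delta]\xrightarrow{p\theta}\mathcal{O}[\Delta]\to{\rm BF}_{\mathcal{O}}(Y)\to 0$ gives
$$e_\psi\,{\rm BF}_{\mathcal{O}}(Y)\simeq e_\psi\mathcal{O}[\Delta]/p\theta\, e_\psi\mathcal{O}[\Delta].$$
On $e_\psi\mathcal{O}[\Delta]$, multiplication by $p\theta$ acts as multiplication by the scalar $\psi(p\theta)$. Transporting through the isomorphism with $\mathcal{O}$ yields $e_\psi{\rm BF}_{\mathcal{O}}(Y)\simeq\mathcal{O}/\psi(p\theta)\mathcal{O}$.

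For the second assertion, let $\psi$ be even and non-trivial. The identity $\theta+j\theta=-N_\Delta$ from the proof of \cref{bernoulli} gives $2\psi(\theta)=\psi(\theta)+\psi(j\theta)=-\psi(N_\Delta)=0$. Equivalently, $B_{1,\psi^{-1}}=0$ and so $\psi(p\theta)=0$. The quotient is then $\mathcal{O}/0=\mathcal{O}$.

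I do not expect a real obstacle. The only point needing care is the convention for $\psi$ acting on $\theta$ versus on $\sigma^{-1}$ in $e_\psi$: one must check that $\sigma e_\psi=\psi(\sigma)e_\psi$ with the given formula for $e_\psi$, so that the scalar is indeed $\psi(p\theta)$ rather than $\psi^{-1}(p\theta)$. This check is routine, since $\sigma e_\psi=\frac{1}{p-1}\sum_\tau\psi(\tau)\sigma\tau^{-1}$, and reindexing by $\tau'=\tau\sigma^{-1}$ gives $\psi(\sigma)e_\psi$.
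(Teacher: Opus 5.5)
Your proposal is correct and follows essentially the same route as the paper. You identify $\mathbb{Z}V_Y$ with $\mathbb{Z}[\Delta]$ so that $\mathcal{BF}_Y$ is multiplication by $p\theta$, base change to $\mathcal{O}$, and take the $e_\psi$-isotypic component of the presentation. There $p\theta$ acts on $e_\psi\mathcal{O}[\Delta]\simeq\mathcal{O}$ by the scalar $\psi(p\theta)$, which vanishes for non-trivial even $\psi$.

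You also supply routine details the paper leaves implicit: the idempotent decomposition using $\ell\nmid p-1$, the check $\sigma e_\psi=\psi(\sigma)e_\psi$, and the vanishing via $\theta+j\theta=-N_\Delta$. Flatness of $\mathcal{O}$ is not actually needed, since right exactness of the tensor product suffices.
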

\begin{proof}
Starting with the finite free presentation (\ref{finite_free_presentation}), identifying $\mathbb{Z}V_{Y}$ with $\mathbb{Z}[\Delta]$, and tensoring with $\mathcal{O}$ over $\mathbb{Z}$ gives another finite free presentation
\begin{equation} \label{simple_ffp1}
\mathcal{O}[\Delta] \stackrel{\mathcal{BF}}{\longrightarrow} \mathcal{O}[\Delta] \rightarrow {\rm BF}_{\mathcal{O}}(Y) \rightarrow 0,
\end{equation}
but this time over $\mathcal{O}[\Delta]$.  Given $\psi \in \widehat{\Delta}(\mathbb{C}_{\ell})$ and taking the corresponding isotypic component of (\ref{simple_ffp1}) gives an exact sequence
$$e_{\psi} \mathcal{O}[\Delta] \stackrel{\mathcal{BF}^{\psi}}{\longrightarrow} e_{\psi} \mathcal{O}[\Delta] \rightarrow e_{\psi}  {\rm BF}_{\mathcal{O}}(Y) \rightarrow 0,$$
now of $\mathcal{O}$-modules.  Noticing that $\psi(p \cdot \theta) = -p \cdot B_{1,\psi^{-1}}$ when $\psi \neq \psi_{0}$, it suffices to combine with \cref{bernoulli} to give the desired result. 
\end{proof}

\begin{comment}
\begin{corollary}
$\BF_q(Y)$ has $\mathbb{Z}_q$-rank $(p-3)/2$ for any $q$.
\end{corollary}
\begin{proof}
This follows from the above theorem and the fact that 
\[\textup{rank}_{\mathbb{Z}_q}(M)=\textup{rank}_{\mathcal{O}}(M\otimes \mathcal{O})\] for any $\mathbb{Z}_q$-module $M$. 
\end{proof}
\DV{Nice!}
\end{comment}

As an equality in $\mathcal{O}[\Delta]$, we have
\begin{equation} \label{plus_minus}
e^{+} = \sum_{\substack{\psi \in \widehat{\Delta}(\mathcal{O}) \\ \psi \text{ even}}}e_{\psi} \text{ and } e^{-} = \sum_{\substack{\psi \in \widehat{\Delta}(\mathcal{O}) \\ \psi \text{ odd}}}e_{\psi},
\end{equation}
from which it follows that if $M^{-}$ is a finite $\mathcal{O}[\Delta]$-module (and by a finite $R$-module, we mean an $R$-module with finite cardinality, not a finitely generated $R$-module as it is sometimes meant), then 
\begin{equation} \label{deco_as_prod}
\# M^{-} = \prod_{\substack{\psi \in \widehat{\Delta}(\mathcal{O}) \\ \psi \text{ odd }}} \# e_{\psi} M,
\end{equation}
and a similar equality happens as well on the plus side.  \cref{key_iso1} allows us to calculate the cardinality of the odd isotypic components of ${\rm BF}_{\mathcal{O}}(Y)$.

\begin{corollary} \label{graph_side}
Let $Y/X$ be the Galois cover $Y/X$ defined in \cref{a_special_cover}, and let $\ell$ be a rational prime satisfying (\ref{ss_hyp}).  If $\ell \neq p$, and $\psi \in \widehat{\Delta}(\mathbb{C}_{\ell}) \subseteq \widehat{\Delta}(\mathcal{O})$ is an odd character, then
$$\# e_{\psi} {\rm BF}_{\mathcal{O}}(Y) = |\psi(p\theta)|_{\ell}^{-f_{\ell}} = |B_{1,\psi^{-1}}|_{\ell}^{-f_{\ell}},$$
where $f_{\ell} = [\mathcal{O}:\mathbb{Z}_{\ell}]$, and $|\cdot|_{\ell}$ denotes the usual $\ell$-adic absolute value on $\mathbb{C}_{\ell}$.  If $\ell=p$ and $\psi \in \widehat{\Delta}(\mathbb{C}_{p}) \subseteq \widehat{\Delta}(\mathbb{Z}_{p})$ is an odd character, then
$$\# e_{\psi} {\rm BF}_{p}(Y) = |\psi(p\theta)|_{p}^{-1} = p |B_{1,\psi^{-1}}|_{p}^{-1},$$
and for the trivial character $\psi_{0} \in \widehat{\Delta}(\mathbb{C}_{p}) \subseteq \widehat{\Delta}(\mathbb{Z}_{p})$, we have instead
$$\# e_{\psi_{0}} {\rm BF}_{p}(Y) = |\psi_{0}(p\theta)|_{p}^{-1} = p.$$
\end{corollary}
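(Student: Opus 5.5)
The plan is to read off the cardinalities directly from \cref{key_iso1}, which gives $e_{\psi}{\rm BF}_{\mathcal{O}}(Y) \simeq \mathcal{O}/\psi(p\theta)\mathcal{O}$ for every $\psi \in \widehat{\Delta}(\mathcal{O})$. For an odd character $\psi$, \cref{bernoulli} gives $\psi(p\theta) = -pB_{1,\psi^{-1}} \neq 0$, because $\psi^{-1}$ is also odd. Hence the quotient is finite. Since $\mathcal{O}$ is a complete discrete valuation ring that is free of rank $f_{\ell}=[\mathcal{O}:\mathbb{Z}_{\ell}]$ over $\mathbb{Z}_{\ell}$, the standard norm computation gives
$$\#\mathcal{O}/x\mathcal{O} = \#\mathbb{Z}_{\ell}/N_{\mathcal{O}/\mathbb{Z}_{\ell}}(x)\mathbb{Z}_{\ell} = |N(x)|_{\ell}^{-1} = |x|_{\ell}^{-f_{\ell}}$$
for any nonzero $x \in \mathcal{O}$. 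Here $|\cdot|_{\ell}$ is the unique extension of the $\ell$-adic absolute value, and the last equality uses that $\mathbb{Q}_{\ell}(\mu_{p-1})/\mathbb{Q}_{\ell}$ is Galois, so all conjugates of $x$ have the same absolute value. This gives the first equality $\#e_{\psi}{\rm BF}_{\mathcal{O}}(Y) = |\psi(p\theta)|_{\ell}^{-f_{\ell}}$.

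For $\ell \neq p$, the factor $p$ is an $\ell$-adic unit, so $|\psi(p\theta)|_{\ell} = |B_{1,\psi^{-1}}|_{\ell}$, which is the second equality.

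For $\ell = p$, note that $p-1$ divides $p-1$, so $\mu_{p-1} \subseteq \mathbb{Z}_{p}$. Thus $\mathcal{O} = \mathbb{Z}_{p}$, $f_{p}=1$, and ${\rm BF}_{\mathcal{O}}(Y) = {\rm BF}_{p}(Y)$. The same argument gives $\#e_{\psi}{\rm BF}_{p}(Y) = |\psi(p\theta)|_{p}^{-1} = |p|_{p}^{-1}|B_{1,\psi^{-1}}|_{p}^{-1} = p|B_{1,\psi^{-1}}|_{p}^{-1}$ for odd $\psi$.

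For the trivial character, we compute $\psi_{0}(p\theta) = -\sum_{i=1}^{p-1} i = -\frac{p(p-1)}{2}$. Since $(p-1)/2$ is a $p$-adic unit, this has absolute value $|\psi_{0}(p\theta)|_p = p^{-1}$, so $\#e_{\psi_{0}}{\rm BF}_{p}(Y) = p$. This agrees with \cref{minus_vs_torsion}.

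The only mildly delicate point is the identity $\#\mathcal{O}/x\mathcal{O} = |x|_{\ell}^{-f_{\ell}}$, where one must fix the normalization of $|\cdot|_{\ell}$ on $\mathbb{C}_{\ell}$ as extending the standard one on $\mathbb{Q}_{\ell}$. The cleanest route is to write $x = u\pi^{v}$ with $\pi$ a uniformizer. Then $\#\mathcal{O}/x\mathcal{O} = (\#k)^{v}$, where $k$ is the residue field, and $|\pi|_{\ell}^{-f_\ell} = \#k$ because $\mathbb{Q}_{\ell}(\mu_{p-1})/\mathbb{Q}_{\ell}$ is unramified: indeed $\ell \nmid p-1$. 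So $e=1$ and $f_{\ell}$ equals the residue degree.
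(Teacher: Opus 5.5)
Your proposal is correct and follows the paper's own argument. Like the paper, you read the cardinalities off the isomorphism $e_{\psi}{\rm BF}_{\mathcal{O}}(Y) \simeq \mathcal{O}/\psi(p\theta)\mathcal{O}$ of \cref{key_iso1}, use $\mathcal{O}=\mathbb{Z}_{p}$ when $\ell=p$, and evaluate $\psi_{0}(p\theta) = -\frac{p(p-1)}{2}$; you additionally spell out the identity $\#\mathcal{O}/x\mathcal{O} = |x|_{\ell}^{-f_{\ell}}$ via the unramifiedness of $\mathbb{Q}_{\ell}(\mu_{p-1})/\mathbb{Q}_{\ell}$, which the paper leaves implicit.
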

\begin{proof}
This follows directly from \cref{key_iso1} after noticing that when $\ell=p$ we have ${\rm BF}_{\mathcal{O}}(Y) = {\rm BF}_{p}(Y)$ and using again the equality
$$\psi_{0}(p\theta) = - \frac{(p-1)p}{2}.$$
\end{proof}
We recall the following result due to Mazur and Wiles from \cite[Theorem 2, page 216]{Mazur/Wiles:1984}.  
\begin{theorem}[Mazur-Wiles] \label{MW}
Let $K = \mathbb{Q}(\zeta_{p})$, and let $\ell$ be a rational prime satisfying (\ref{ss_hyp}).  With the notation as above, if $\ell \neq p$ and $\psi \in \widehat{\Delta}(\mathbb{C}_{\ell}) \subseteq \widehat{\Delta}(\mathcal{O})$ is any odd character, one has 
\begin{equation} \label{q_neq_p}
\# e_{\psi} {\rm Cl}^{-}_{\mathcal{O}}(K) = |B_{1,\psi^{-1}}|_{\ell}^{-f_{\ell}},
\end{equation}
where $f_{\ell} = [\mathcal{O}:\mathbb{Z}_{\ell}]$, and $|\cdot|_{\ell}$ denotes the usual $\ell$-adic absolute value on $\mathbb{C}_{\ell}$.  If $\ell = p$, then (\ref{q_neq_p}) is also true provided the odd character $\psi \in \widehat{\Delta}(\mathbb{C}_{p}) \subseteq \widehat{\Delta}(\mathbb{Z}_{p})$ satisfies $\psi \neq \omega$, where $\omega \in \widehat{\Delta}(\mathbb{Z}_{p})$ is the Teichm\"{u}ller character.
\end{theorem}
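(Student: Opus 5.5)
This is the theorem of Mazur and Wiles, and the paper only cites it from \cite[Theorem 2, page 216]{Mazur/Wiles:1984}. If I had to reproduce it, I would use the standard ``divisibility plus class number formula'' argument. The idea is to prove an inequality for each odd isotypic component, and then observe that the product of all these inequalities is an equality, by the analytic minus class number formula \cite[Theorem 4.17]{Washington:1997} already used in the proof of \cref{goal}. That forces every individual inequality to be an equality.

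\emph{Step 1 (global equality of products).} Take the $\ell$-part of the formula
$$h^{-}(K) = 2p \prod_{\psi \text{ odd}}\Bigl(-\tfrac{1}{2}B_{1,\psi^{-1}}\Bigr).$$
Then tensor with $\mathcal{O}$ and use (\ref{deco_as_prod}) for ${\rm Cl}^{-}_{\mathcal{O}}(K)$. The powers of $2$ are $\ell$-adic units, since $\ell \nmid p-1$ forces $\ell$ odd. For $\ell \neq p$ this gives
$$\prod_{\psi\ \mathrm{odd}} \# e_\psi {\rm Cl}^{-}_{\mathcal{O}}(K) = \prod_{\psi\ \mathrm{odd}} |B_{1,\psi^{-1}}|_\ell^{-f_\ell}.$$
For $\ell = p$, \cref{bernoulli} gives $|B_{1,\omega^{-1}}|_p^{-1} = p$, and this cancels the factor $p$ in $2p$. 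I would also check that $e_\omega {\rm Cl}_p(K)=0$; this is classical, for instance via Herbrand's theorem together with the fact that $\mathbb{Q}(\zeta_p)$ has no unramified $\mu_p$-Kummer extension coming from $p$ itself. With this, the same product equality holds over the odd characters $\psi \neq \omega$.

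\emph{Step 2 (componentwise inequality).} For each odd $\psi$ (with $\psi\neq\omega$ when $\ell=p$), I would show
$$\# e_\psi {\rm Cl}^{-}_{\mathcal{O}}(K) \ \ge\ |B_{1,\psi^{-1}}|_\ell^{-f_\ell}.$$
This is the real content of Mazur--Wiles, and it is the main obstacle. Stickelberger's theorem only gives annihilation, which is an inequality in the wrong direction for sizes. The lower bound needs Ribet's method in its refined form: one uses congruences between Eisenstein series and cusp forms on $X_1(p)$ (or the $\ell$-adic analogue) to construct unramified abelian $\ell$-extensions of $K$ on which $\Delta$ acts through $\psi$, with degree at least $|B_{1,\psi^{-1}}|_\ell^{-f_\ell}$. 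Alternatively, one invokes the Iwasawa main conjecture proved by Mazur and Wiles in the relevant $\ell$-adic tower and descends to the bottom layer. There, the characteristic power series evaluated at the appropriate point gives $B_{1,\psi^{-1}}$, and the control theorem is clean because $\ell\nmid[K:\mathbb{Q}]$.

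\emph{Step 3.} Combine the two steps. Each factor on the left of the product equality in Step 1 dominates the corresponding factor on the right, and all factors are finite positive numbers. Hence every factor must be equal, which is (\ref{q_neq_p}).
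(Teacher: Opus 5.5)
The paper gives no argument for this statement; it is quoted from \cite[Theorem 2, page 216]{Mazur/Wiles:1984}. Your Steps 1 and 3 are the standard and correct reduction: the minus class number formula turns a one-sided bound on every odd component into an equality. However, Step 2 is the entire theorem, so your outline is still a citation of Mazur--Wiles. Moreover, both mechanisms you offer for Step 2 are inaccurate as stated.

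\textbf{The descent is not clean.} You claim the descent from the main conjecture is clean ``because $\ell\nmid[K:\mathbb{Q}]$''. That hypothesis makes $\mathcal{O}[\Delta]$ semisimple, but it says nothing about how $\ell$ splits in $K$. Suppose $\ell\neq p$ and $\psi(\ell)=1$. This happens for every $\psi$ when $\ell\equiv 1\pmod p$, for instance $p=3$, $\ell=7$. Then:
\begin{enumerate}
\item $\ell$ splits in $K$, and the permutation module on the primes above $\ell$ has a nonzero $\psi$-part.
\item The relevant $\ell$-adic $L$-function has a trivial zero: $L_\ell(0,\omega_\ell\psi^{-1})=-(1-\psi^{-1}(\ell))B_{1,\psi^{-1}}=0$, where $\omega_\ell$ is the Teichm\"uller character modulo $\ell$. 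Equivalently, the $\psi$-part of the Stickelberger element of $\mathbb{Q}(\zeta_{p\ell})$, the bottom layer of the $\ell$-adic tower, picks up the Euler factor $1-\psi^{-1}(\ell)$ and vanishes.
\item So the characteristic power series of the $\psi$-part of the Iwasawa module vanishes at the descent point, rather than ``giving $B_{1,\psi^{-1}}$''.
\end{enumerate}
Recovering $\#e_{\psi}{\rm Cl}^{-}_{\mathcal{O}}(K)$ then requires the leading term at the trivial zero, compared with the contribution of the primes above $\ell$. That is genuinely extra input, of Ferrero--Greenberg/Gross--Koblitz type. For $\ell=p$ your descent claim is fine, since there is a single totally ramified prime above $p$ and $\psi(p)=0$.

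\textbf{Ribet's method.} On its own it only produces an unramified cyclic extension of degree $\ell$, i.e.\ $e_{\psi}{\rm Cl}^{-}_{\mathcal{O}}(K)\neq 0$ when $\ell\mid B_{1,\psi^{-1}}$. It does not give the full lower bound $|B_{1,\psi^{-1}}|_{\ell}^{-f_{\ell}}$; that full bound is exactly what Mazur and Wiles obtained by working in the tower.

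\textbf{Smaller points.} In Step 1 the exponent is backwards. \cref{bernoulli} gives $|B_{1,\omega^{-1}}|_{p}=p$, so the $\omega$-factor is $|B_{1,\omega^{-1}}|_{p}^{-1}=p^{-1}$, and this is what cancels $|2p|_{p}^{-1}=p$; your conclusion is right. For $e_{\omega}{\rm Cl}_{p}(K)=0$ you do not need Herbrand's theorem; your Kummer argument alone suffices. The radical of an unramified $\mathbb{Z}/p$-extension on whose Galois group $\Delta$ acts by $\omega$ lies in $(K^{\times}/K^{\times p})^{\Delta}$. This is the image of $\mathbb{Q}^{\times}$ because $p\nmid\#\Delta$, so only $a=p^{k}$ is possible, and $K(\sqrt[p]{p})/K$ is ramified above $p$. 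Alternatively, use the Stickelberger argument of \cite[Proposition 6.16]{Washington:1997}, which is what the paper invokes in \cref{equ_card}.
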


Combining with \cref{graph_side} gives the following corollary.

\begin{corollary} \label{equ_card}
With the notation as above, let $\ell$ be a rational prime satisfying (\ref{ss_hyp}).  If $\ell \neq p$ and $\psi \in \widehat{\Delta}(\mathbb{C}_{\ell}) \subseteq \widehat{\Delta}(\mathcal{O})$ is an odd character, then
$$\#e_{\psi}{\rm BF}_{\mathcal{O}}(Y) = \#e_{\psi}{\rm Cl}^{-}_{\mathcal{O}}(K).$$
If $\ell = p$ and $\psi \in \widehat{\Delta}(\mathbb{C}_{p}) \subseteq \widehat{\Delta}(\mathbb{Z}_{p})$ is an odd character different from the Teichm\"{u}ller character $\omega \in \widehat{\Delta}(\mathbb{Z}_{p})$, one has
$$\#e_{\psi} {\rm BF}_{p}(Y) = p \cdot \# e_{\psi} {\rm Cl}^{-}_{p}(K).$$
At last, we have
$$\#e_{\omega} {\rm BF}_{p}(Y) = \#e_{\omega} {\rm Cl}^{-}_{p}(K) = 1.$$
\end{corollary}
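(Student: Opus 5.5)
The plan is to combine \cref{graph_side} with \cref{MW} character by character, and to treat the Teichm\"{u}ller character separately.

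For $\ell \neq p$ with $\ell \nmid p-1$ and $\psi$ odd, \cref{graph_side} gives $\#e_{\psi}{\rm BF}_{\mathcal{O}}(Y) = |B_{1,\psi^{-1}}|_{\ell}^{-f_{\ell}}$. Equation (\ref{q_neq_p}) of \cref{MW} gives the same value for $\#e_{\psi}{\rm Cl}^{-}_{\mathcal{O}}(K)$, so the two cardinalities agree. For $\ell = p$ we have $\mathcal{O} = \mathbb{Z}_{p}$, since $\mu_{p-1} \subseteq \mathbb{Z}_{p}$, and so $f_{p}=1$. For an odd $\psi \neq \omega$, \cref{graph_side} gives $p|B_{1,\psi^{-1}}|_{p}^{-1}$, while \cref{MW} gives $|B_{1,\psi^{-1}}|_{p}^{-1}$ for the class group side. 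Their ratio is $p$, which is the second claim.

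The remaining case is $\psi=\omega$. On the digraph side, \cref{graph_side} gives $\#e_{\omega}{\rm BF}_{p}(Y) = p|B_{1,\omega^{-1}}|_{p}^{-1} = |pB_{1,\omega^{-1}}|_{p}^{-1}$. By the last assertion of \cref{bernoulli}, $pB_{1,\omega^{-1}} \equiv p-1 \not\equiv 0 \pmod p$, so this is a $p$-adic unit and the cardinality is $1$. Equivalently, $e_{\omega}{\rm BF}_{p}(Y)\simeq \mathbb{Z}_{p}/\omega(p\theta)\mathbb{Z}_{p}$ by \cref{key_iso1}, and $\omega(p\theta) = -pB_{1,\omega^{-1}}$ is a unit.

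On the class group side, \cref{MW} does not apply to $\omega$, so I need the classical vanishing $e_{\omega}{\rm Cl}_{p}(K)=0$. Here I would invoke Stickelberger's theorem: the Stickelberger element annihilates ${\rm Cl}(K)$. Concretely, for any $c$ prime to $p$, $(c-\sigma_{c})\theta$ lies in $\mathbb{Z}[\Delta]$ and annihilates ${\rm Cl}(K)$. Applying $e_{\omega}$, this element acts on $e_{\omega}{\rm Cl}_{p}(K)$ through the scalar
$$(c-\omega(\sigma_{c}))\,\omega(\theta) = -(c-\omega(c))\,B_{1,\omega^{-1}}.$$
This uses the convention $\psi(\theta)=-B_{1,\psi^{-1}}$ and the pairing of idempotents, which I would check carefully. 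Since $pB_{1,\omega^{-1}}$ is a unit, $B_{1,\omega^{-1}}$ has valuation $-1$. Choosing $c$ with $v_{p}(c-\omega(c))=1$, for example $c=1+p$ with $\omega(c)=1$, makes the scalar a $p$-adic unit. A unit annihilating the module forces $e_{\omega}{\rm Cl}_{p}(K)=0$.

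This case is the main obstacle, because one must match the paper's normalization of $e_{\psi}$ against the action of $\theta$. If that bookkeeping becomes messy, the fallback is to cite \cite[Proposition 6.16]{Washington:1997}, that $e_{\omega}A=0$ for $A$ the $p$-part of the class group, which is the standard consequence of this argument. Either way, $e_{\omega}{\rm Cl}_{p}(K)$ lies in the minus part because $\omega$ is odd, and the third claim follows.
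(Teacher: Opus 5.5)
Your proposal is correct and follows the paper's proof. The first two claims come from matching \cref{graph_side} against \cref{MW} character by character. The $\omega$ case combines $|pB_{1,\omega^{-1}}|_{p}=1$ from \cref{bernoulli} with the vanishing $e_{\omega}\mathrm{Cl}^{-}_{p}(K)=0$, which the paper simply cites from \cite[Proposition 6.16]{Washington:1997}. Your Stickelberger sketch is the standard proof of that cited fact, and the bookkeeping you were worried about checks out: with $c=1+p$ the annihilator $(c-\sigma_{c})\theta$ is just $p\theta$, and $\omega(p\theta)=-pB_{1,\omega^{-1}}$ is a $p$-adic unit.
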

\begin{proof}
Let us assume first that $\ell \neq p$.  \cref{MW} gives for an odd character $\psi \in \widehat{\Delta}(\mathbb{C}_{\ell}) \subseteq \widehat{\Delta}(\mathcal{O})$ the equality
$$\#e_{\psi}{\rm Cl}^{-}_{\mathcal{O}}(K) = |B_{1,\psi^{-1}}|^{-f_{\ell}}.$$
Combining with \cref{graph_side} gives the first claim.  The same argument also gives the second claim.  For the Teichm\"{u}ller character $\omega$, \cite[Proposition 6.16]{Washington:1997} implies that $e_{\omega} {\rm Cl}^{-}_{p}(K) =0$, and since $|pB_{1,\omega^{-1}}|_{p}^{-1} = 1$ by \cref{bernoulli}, we have from \cref{graph_side} the chain of equalities
$$\# e_{\omega} {\rm BF}_{p}(Y) = |pB_{1,\omega^{-1}}|_{p}^{-1} = 1 = \# e_{\omega} {\rm Cl}^{-}_{p}(K). $$
\end{proof}

%Remove the following comment eventually.
\begin{comment}
We note first that \cref{key_iso1} implies that we have an isomorphism
$${\rm BF}_{p}(Y) \simeq {\rm BF}_{p}(Y)_{\rm tors} \times \mathbb{Z}_{p}^{r},$$
where $r$ is the number of even characters minus one, namely
$$r = \frac{p-3}{2}.$$
Moreover, one has
\begin{equation} \label{torsion_part}
{\rm BF}_{p}(Y)_{\rm tors} = {\rm BF}_{p}^{-}(Y) \oplus e_{\psi_{0}} {\rm BF}_{p}(Y).
\end{equation}
Combining (\ref{deco_as_prod}), (\ref{nf_side}), and \cref{equ_card}, we calculate
\begin{equation*}
\begin{aligned}
\# {\rm BF}_{p}^{-}(Y) &= \prod_{\psi \in \widehat{\Delta}_{{\rm odd}}(\mathbb{Z}_{p})} \# e_{\psi} {\rm BF}_{p}(Y) \\
&= \#e_{\omega}  {\rm BF}_{p}(Y) \cdot \prod_{\substack{\psi \in \widehat{\Delta}_{{\rm odd}}(\mathbb{Z}_{p}) \\ \psi \neq \omega}} \# e_{\psi} {\rm BF}_{p}(Y) \\
&= \#e_{\omega}  {\rm Cl}^{-}(K)[p^{\infty}] \cdot \prod_{\substack{\psi \in \widehat{\Delta}_{{\rm odd}}(\mathbb{Z}_{p}) \\ \psi \neq \omega}} \left(p \cdot \# e_{\psi} {\rm Cl}^{-}(K)[p^{\infty}]\right) \\
&= p^{\frac{p-3}{2}}  \cdot h_{p}^{-}(K),
\end{aligned}
\end{equation*}
and this gives the first equality in the statement of the corollary.  For the second equality, we know that $\#e_{\psi_{0}}{\rm BF}_{p}(Y) = p$ by \cref{equ_card}, and thus it suffices to combine the first equality obtained above with the direct sum decomposition (\ref{torsion_part}).  \DV{I don't know if I like this way of indicating a character is odd...}
\end{comment}

\appendix

\crefalias{section}{appendix}
\section{Galois theory for digraphs} \label{gr_acting_on_digraph}
In this appendix, we state and prove some fundamental results from the Galois theory of digraphs.
\begin{theorem}[Unique lifting property] \label{unique_lifting_prop}
Let $X,Y,Z$ be digraphs, and assume that $Z$ is strongly connected.  Suppose that $f:Y \rightarrow X$ is a covering map, and that $q,s:Z \rightarrow Y$ are two morphisms of digraphs satisfying $f \circ q = f \circ s$.  If there exists $w \in V_{Z}$ such that $q(w) = s(w)$, then $q = s$.
\end{theorem}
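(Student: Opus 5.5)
The plan is the standard "agreement set is open and closed" argument, adapted to directed paths. Let $S = \{z \in V_{Z} : q(z) = s(z)\}$; by hypothesis $w \in S$. First I show that $S$ is closed under following edges forward: if $\varepsilon \in E_{Z}$ has $o(\varepsilon) \in S$, then $q(\varepsilon) = s(\varepsilon)$ and $t(\varepsilon) \in S$. Indeed, $q(\varepsilon)$ and $s(\varepsilon)$ both lie in $E_{Y,y}^{o}$ with $y = q(o(\varepsilon)) = s(o(\varepsilon))$, since $q$ and $s$ are morphisms of digraphs. Moreover $f(q(\varepsilon)) = f(s(\varepsilon))$ because $f\circ q = f \circ s$. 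By the local bijectivity condition in \cref{def_cov_map}, $f|_{E_{Y,y}^{o}}$ is injective, hence $q(\varepsilon) = s(\varepsilon)$. Consequently $q(t(\varepsilon)) = t(q(\varepsilon)) = t(s(\varepsilon)) = s(t(\varepsilon))$, so $t(\varepsilon) \in S$.

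Next I use strong connectivity of $Z$. For any $z \in V_{Z}$ with $z \neq w$, there is a path $e_{1}\cdot\ldots\cdot e_{m}$ from $w$ to $z$. By induction on $i$, using the previous step, each $o(e_{i}) \in S$, each $e_{i}$ satisfies $q(e_{i}) = s(e_{i})$, and finally $z = t(e_{m}) \in S$. Thus $q$ and $s$ agree on all of $V_{Z}$.

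It remains to treat edges. An edge $\varepsilon \in E_{Z}$ need not lie on a path starting at $w$ by itself, but its origin $o(\varepsilon)$ is already known to lie in $S = V_{Z}$. Applying the first step to $\varepsilon$ then gives $q(\varepsilon) = s(\varepsilon)$. Therefore $q = s$ as morphisms.

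The only subtle point is directionality: we can only propagate along outgoing edges, so we must use paths from $w$ to $z$, not from $z$ to $w$. Strong connectivity supplies such paths. (Alternatively, one could propagate backward using injectivity on $E^{t}_{Y,y}$.) I do not expect any real obstacle beyond checking this.
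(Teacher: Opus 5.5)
Your proposal is correct and follows the same route as the paper: injectivity of $f$ on $E^{o}_{Y,y}$ forces $q$ and $s$ to agree on edges leaving a vertex where they already agree, and strong connectivity of $Z$ then propagates this agreement. You are in fact more explicit than the paper, which handles only the edges at $w$ (outgoing and, redundantly, incoming) before appealing to strong connectivity, whereas you carry out the induction along directed paths from $w$ and then treat every edge through its origin.
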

\begin{proof}
Let $e \in E_{Z,w}^{o}$.  We will show first that $q(e) = s(e)$.  Let $j = f \circ q = f \circ s$ and let $\varepsilon = j(e)$.   Note that
$$o(q(e)) = q(o(e)) = q(w) = s(w) = s(o(e)) = o(s(e)),$$
from which it follows that if we let $w_{0} = o(q(e)) = o(s(e))$, then $q(e), s(e) \in E_{Y,w_{0}}^{o}$.  Now,
$$f(q(e)) = \varepsilon = f(s(e))$$
and thus $q(e) = s(e)$ by the local bijectivity condition of $f$.  By the same token, if $e \in E_{Z,w}^{t}$, then $q(e) = s(e)$ as well.  Since $Z$ is assumed to be strongly connected, one gets the desired result.  
\end{proof}

If $Y/X$ is a cover of digraphs (see \cref{def_cov_map}), then the group of deck transformations ${\rm Aut}(Y/X)$ (see (\ref{deck_group_def})) acts on the fibers $f^{-1}(v)$ for all $v \in V_{X}$.  
\begin{definition}
We will call a cover $f:Y \rightarrow X$ of digraphs a Galois cover if both $X$ and $Y$ are strongly connected and if the action of ${\rm Aut}(Y/X)$ on each of the fibers $f^{-1}(v)$ is transitive for all $v \in V_{X}$.  When $Y/X$ is a Galois cover, we write ${\rm Gal}(Y/X)$ instead of ${\rm Aut}(Y/X)$.
\end{definition}

If $f:Y\rightarrow X$ is a cover of digraphs and $G = {\rm Aut}(Y/X)$, then we have a well-defined induced map $\tilde{f}_{G}:Y_{G} \rightarrow X$ given by
$$G \cdot w \mapsto \tilde{f}_{G}(G \cdot w) = f(w) \text{ and } G \cdot \varepsilon \mapsto \tilde{f}_{G}(G \cdot \varepsilon) = f(\varepsilon),$$
and we leave it to the reader to check that $\tilde{f}_{G}:Y_{G} \rightarrow X$ is a morphism of digraphs.
\begin{proposition} \label{galois_reformulation}
Let $X,Y$ be two strongly connected digraphs and let $f:Y \rightarrow X$ be a covering map.  Let also $G = {\rm Aut}(Y/X)$, then the cover $Y/X$ is Galois if and only if the morphism of digraphs
$$\tilde{f}_{G}: Y_{G} \rightarrow X $$
is an isomorphism.
\end{proposition}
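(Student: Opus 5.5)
We prove the two implications separately. Throughout, $G = {\rm Aut}(Y/X)$ and $\tilde f_G : Y_G \to X$ is the induced morphism. First we show that $\tilde f_G$ is always a covering map; the problem then reduces to checking when it is bijective on vertices.

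\emph{$\tilde f_G$ is a cover.} Surjectivity on vertices follows from the surjectivity of $f$. For local bijectivity, take an orbit $G\cdot w$ and an edge $e \in E^{o}_{X,f(w)}$.
\begin{itemize}
\item \emph{Surjectivity at $G\cdot w$:} the unique lift $\varepsilon \in E^{o}_{Y,w}$ of $e$ gives $G\cdot\varepsilon \in E^{o}_{Y_G,G\cdot w}$ mapping to $e$.
\item \emph{Injectivity at $G\cdot w$:} suppose $G\cdot\varepsilon_1$ and $G\cdot\varepsilon_2$ both start at $G\cdot w$ and map to $e$. Translating by elements of $G$, we may assume $o(\varepsilon_1)=o(\varepsilon_2)=w$. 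Local bijectivity of $f$ at $w$ then forces $\varepsilon_1=\varepsilon_2$.
\end{itemize}
The same argument applies to terminal edges. The translation step uses only that $G$ acts compatibly with $f$; freeness from \cref{act_special} is not needed here.

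\emph{($\Rightarrow$)} Suppose $Y/X$ is Galois. The vertex map of $\tilde f_G$ is injective: if $f(w_1)=f(w_2)$, transitivity on the fiber gives $\sigma\in G$ with $\sigma w_1 = w_2$, so $G\cdot w_1 = G\cdot w_2$. Combined with surjectivity, $\tilde f_G$ is bijective on vertices. Edges then follow from local bijectivity. Every edge of $Y_G$ lies in exactly one set $E^{o}_{Y_G,G\cdot w}$, and every edge of $X$ lies in exactly one set $E^{o}_{X,v}$. Since $\tilde f_G$ is bijective on vertices and bijective on each star $E^{o}_{Y_G,G\cdot w} \to E^{o}_{X,\tilde f_G(G\cdot w)}$, it is bijective on edges. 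Hence $\tilde f_G$ is an isomorphism.

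\emph{($\Leftarrow$)} Suppose $\tilde f_G$ is an isomorphism. Its injectivity on vertices says exactly that $f(w_1)=f(w_2)$ implies $G\cdot w_1 = G\cdot w_2$. This is transitivity of $G$ on each fiber $f^{-1}(v)$. Since $X$ and $Y$ are strongly connected by hypothesis, $Y/X$ is Galois.

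The main point requiring care is the edge bijection in ($\Rightarrow$): vertex bijectivity must be upgraded to edge bijectivity through local bijectivity of $\tilde f_G$, which is why we establish first that $\tilde f_G$ is a cover. \cref{unique_lifting_prop} is not needed for this statement, but it will be used in the Galois correspondence results that follow.
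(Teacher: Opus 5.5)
Your proof is correct and is essentially the paper's argument: vertex injectivity of $\tilde{f}_{G}$ comes from transitivity of $G$ on the fibers, the converse is read off from vertex injectivity, and edge injectivity rests on moving the origins together by an element of $G$ and then applying local bijectivity of $f$. The only difference is organizational. You first package that last step as the general fact that $\tilde{f}_{G}$ is a covering map, which the paper proves later in the appendix for an arbitrary subgroup $H \le {\rm Aut}(Y/X)$. You then get edge bijectivity from vertex bijectivity plus bijectivity on each out-star, whereas the paper uses transitivity directly to align the origins of two edges with the same image.
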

\begin{proof}
Assume first that $\tilde{f}_{G}$ is an isomorphism of digraphs, and let $w_{1},w_{2} \in f^{-1}(v)$ for some $v \in V_{X}$.  Then, one has $\tilde{f}_{G}(G \cdot w_{1}) = \tilde{f}_{G}(G \cdot w_{2})$, and the assumption on $\tilde{f}_{G}$ implies that $G \cdot w_{1} = G \cdot w_{2}$.  In other words, $G$ acts transitively on $f^{-1}(v)$.

Conversely, assume that $G$ acts transitively on each of the fibers $f^{-1}(v)$.  We note first that since $f$ is surjective on vertices and edges, then so is $\tilde{f}_{G}$.  Therefore, to prove the claim, it suffices to show that $\tilde{f}_{G}$ is injective on vertices and edges.  If $\tilde{f}_{G}(G \cdot w_{1}) = \tilde{f}_{G}(G \cdot w_{2})$, then $f(w_{1}) = f(w_{2})$, and therefore the hypothesis shows the existence of $\sigma \in G$ such that $\sigma \cdot w_{1} = w_{2}$.  It follows that $G \cdot w_{1} = G \cdot w_{2}$, and thus $\tilde{f}_{G}$ is injective on vertices.  Similarly, if $\tilde{f}_{G}(G \cdot \varepsilon_{1}) = \tilde{f}_{G}(G \cdot \varepsilon_{2})$, then $f(\varepsilon_{1}) = f(\varepsilon_{2})$, from which it follows that $f(o(\varepsilon_{1})) = f(o(\varepsilon_{2}))$.  The hypothesis implies once more the existence of $\sigma \in G$ such that $\sigma \cdot o(\varepsilon_{1}) = o(\varepsilon_{2})$.  Set $w_{0} = \sigma \cdot o(\varepsilon_{1}) = o(\varepsilon_{2}) \in V_{Y}$.  Then $f(\sigma \cdot \varepsilon_{1}) = f(\varepsilon_{1}) = f(\varepsilon_{2})$.  Now, we have $\sigma \cdot \varepsilon_{1},\varepsilon_{2} \in E_{Y,w_{0}}^{o}$ and $f(\sigma \cdot \varepsilon_{1}) = f(\varepsilon_{2})$.  The local bijectivity condition satisfied by the cover $f$ gives $\sigma \cdot \varepsilon_{1} = \varepsilon_{2}$, and thus $G \cdot \varepsilon_{1} = G \cdot \varepsilon_{2}$ as desired.  

\end{proof}

There are two ways of constructing Galois covers of digraphs: a top-down approach and a bottom-up approach.  We explain here the former approach and the latter approach is described in \cref{volt_assignment}.  Start with a strongly connected digraph $Y$ and assume that a group $G$ acts on $Y$ freely on $V_{Y}$.  Note that the group morphism $\rho:G \rightarrow {\rm Aut}(Y)$ is necessarily injective, since the stabilizer of each vertex is trivial.  In this case, we can identify $G$ with its image in ${\rm Aut}(Y)$, and there is no loss of generality in assuming that $G$ is a subgroup of ${\rm Aut}(Y)$.  

\begin{theorem} \label{one_way_corr}
Let $Y$ be a strongly connected digraph and let $G$ be a subgroup of ${\rm Aut}(Y)$.  Assume also that $G$ acts freely on the vertices of $Y$.  For simplicity, let $X = Y_{G}$.  Then, the natural morphism $Y \rightarrow X$ is a Galois cover of digraphs for which
$${\rm Aut}(Y/X) = G.$$
\end{theorem}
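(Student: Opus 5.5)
By \cref{quotient_const}, the natural morphism $f:Y \rightarrow X = Y_{G}$ is already a covering map, because $G$ acts freely on $V_{Y}$. The digraph $X$ is strongly connected, since any path in $Y$ maps to a path in $X$ and $f$ is surjective on vertices. What remains is to identify ${\rm Aut}(Y/X)$ with $G$ and to check that the cover is Galois. I will prove the two inclusions $G \subseteq {\rm Aut}(Y/X)$ and ${\rm Aut}(Y/X) \subseteq G$, and then deduce transitivity on fibers.

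For the first inclusion, every $\sigma \in G$ is an automorphism of $Y$. It satisfies $f \circ \sigma = f$ because $f$ sends a vertex or an edge to its $G$-orbit, and $G\cdot(\sigma w) = G\cdot w$. Hence $G \subseteq {\rm Aut}(Y/X)$. Since the fibers of $f$ on vertices are exactly the $G$-orbits, $G$ acts transitively on each fiber $f^{-1}(v)$, and so does the larger group ${\rm Aut}(Y/X)$. Together with strong connectedness of $X$ and $Y$, this already shows that $Y/X$ is a Galois cover.

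For the reverse inclusion, I take $\phi \in {\rm Aut}(Y/X)$ and fix a vertex $w \in V_{Y}$. The condition $f \circ \phi = f$ gives $\phi(w) \in f^{-1}(f(w)) = G \cdot w$, so $\phi(w) = \sigma \cdot w$ for some $\sigma \in G$. The two morphisms $\phi$ and $\sigma$ from $Y$ to $Y$ satisfy $f \circ \phi = f = f \circ \sigma$, and they agree at the vertex $w$. Since $Y$ is strongly connected, \cref{unique_lifting_prop} (applied with $Z = Y$) gives $\phi = \sigma \in G$.

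I do not see a real obstacle here. The only point needing care is to make explicit that $f$ is a cover, which is \cref{quotient_const} and uses freeness, and that the vertex fibers of $f$ are precisely the $G$-orbits. Once those are in place, the unique lifting property does all the remaining work.
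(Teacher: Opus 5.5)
Your proof is correct and follows essentially the same route as the paper's: the covering property comes from \cref{quotient_const}, $G \subseteq {\rm Aut}(Y/X)$ holds by construction, the reverse inclusion uses \cref{unique_lifting_prop} applied to $\phi$ and an element of $G$ agreeing with it at one vertex, and the Galois property follows from transitivity of $G$ on the fibers. You also check explicitly that $X$ is strongly connected, which the definition of a Galois cover requires but the paper's proof leaves implicit.
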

\begin{proof}
Since the action of $G$ on $V_{Y}$ is free, the natural morphism $\pi_{G}:Y \rightarrow Y_{G} = X$ is a covering map by \cref{quotient_const}.  

The inclusion $G \subseteq {\rm Aut}(Y/X)$ is clear by definition of $\pi_{G}$.  On the other hand, if $\sigma \in {\rm Aut}(Y/X)$ and $w \in V_{Y}$, then $\pi_{G}(\sigma \cdot w) = \pi_{G}(w)$, and thus there exists $g \in G$ such that $\sigma \cdot w = g \cdot w$.  \cref{unique_lifting_prop} implies the equality $\sigma = g$, and thus $\sigma \in G$ as desired.

Since the action of $G$ is clearly transitive on the fibers $\pi_{G}^{-1}(G \cdot w)$, we get that $Y/X$ is a Galois cover, and this ends the proof.
\end{proof}

Our goal now for the rest of this section is to formulate and prove the Galois correspondence for digraphs (see \cref{gal_cor} below).

\begin{definition}
An intermediate cover of a cover $f:Y \rightarrow X$ consists of a triple $(Z,q,s)$, where $q:Y \rightarrow Z$ and $s:Z \rightarrow X$ are both covering maps satisfying $s \circ q = f$.  Two intermediate covers $(Z_{i},q_{i},s_{i})$, for $i=1,2$, are called equivalent if there exists an isomorphism of digraphs $\varphi: Z_{1} \rightarrow Z_{2}$ such that $\varphi \circ q_{1} = q_{2}$.
\end{definition}

We now associate to any subgroup $H$ of the group of deck transformations ${\rm Aut}(Y/X)$ an intermediate cover.

\begin{theorem}
Let $Y$ be a strongly connected digraph, $f:Y \rightarrow X$ a covering map, and $H$ a subgroup of ${\rm Aut}(Y/X)$.  Then, both morphisms of digraphs $\pi_{H}:Y \rightarrow Y_{H}$ and $\tilde{f}_{H}: Y_{H} \rightarrow X$ are covering maps.  In other words, the triple $(Y_{H},\pi_{H},\tilde{f}_{H})$ is an intermediate cover of $Y/X$.
\end{theorem}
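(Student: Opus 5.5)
The plan is to prove the two covering-map claims separately. Throughout, local bijectivity is checked at each vertex, and the only inputs are that the elements of $H$ commute with $f$ and that $H$ acts freely on $V_{Y}$.

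\emph{The map $\pi_{H}:Y \rightarrow Y_{H}$.} By \cref{act_special}, applied to the cover $Y/X$ with $Y$ strongly connected, ${\rm Aut}(Y/X)$ acts freely on $V_{Y}$, and hence so does the subgroup $H$. \cref{quotient_const} then says directly that $\pi_{H}$ is a covering map. No new work is needed here beyond checking that $X$ may be taken as in \cref{act_special}. The argument of \cref{unique_lifting_prop} only needs $Y$ strongly connected, so connectedness of $X$ is not actually required.

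\emph{The map $\tilde{f}_{H}:Y_{H}\rightarrow X$.} First, $\tilde{f}_{H}$ is well defined, since $f\circ\sigma = f$ for $\sigma \in H$. It is a morphism because $o(H\varepsilon) = H\,o(\varepsilon)$ and $f(o(\varepsilon)) = o(f(\varepsilon))$, and similarly for terminal vertices. Surjectivity on vertices follows from surjectivity of $f$. For local bijectivity, fix a vertex $H\cdot w$ of $Y_{H}$. I will show that the map $E^{o}_{Y_{H},H w}\rightarrow E^{o}_{X,f(w)}$ factors as
$$E^{o}_{Y,w} \xrightarrow{\ \pi_{H}\ } E^{o}_{Y_{H},Hw} \xrightarrow{\ \tilde f_{H}\ } E^{o}_{X,f(w)},$$
where the composite is $f|_{E^{o}_{Y,w}}$, which is a bijection. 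Since the first arrow is a bijection (this is the local bijectivity of $\pi_{H}$ just established), the second arrow is a bijection too. The same argument works for $E^{t}$.

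\emph{Expected obstacle.} The only subtle point is that the first arrow is surjective onto $E^{o}_{Y_{H},Hw}$. An edge $H\varepsilon$ with origin $Hw$ has $o(\varepsilon) = \sigma w$ for some $\sigma\in H$, so $\sigma^{-1}\varepsilon \in E^{o}_{Y,w}$ represents the same orbit. Injectivity of the first arrow uses freeness: if $\sigma\varepsilon_{1} = \varepsilon_{2}$ with both edges starting at $w$, then $\sigma w = w$, so $\sigma = 1$. Both points are exactly what \cref{quotient_const} provides. If that proof is to be treated as unavailable, I would write out these two lines explicitly rather than cite it.
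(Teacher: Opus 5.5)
Your proof is correct. It agrees with the paper for $\pi_{H}$ and takes a slightly different route for $\tilde{f}_{H}$.

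\textbf{The map $\pi_{H}$.} Here you argue exactly as the paper does: freeness comes from \cref{act_special}, and then \cref{quotient_const} gives that $\pi_{H}$ is a cover.

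\textbf{The map $\tilde{f}_{H}$.} The paper uses a direct element chase. Given $H\varepsilon_{1}, H\varepsilon_{2} \in E^{o}_{Y_{H},Hw}$ with the same image, it translates representatives by some $h_{1},h_{2}\in H$ so that both start at $w$. Local injectivity of $f$ then gives $h_{1}\varepsilon_{1}=h_{2}\varepsilon_{2}$. Surjectivity and the $E^{t}$ case are left to the reader.

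You instead factor $f|_{E^{o}_{Y,w}}$ as $\tilde{f}_{H}|_{E^{o}_{Y_{H},Hw}}\circ \pi_{H}|_{E^{o}_{Y,w}}$. Since the composite and the first factor are bijections, so is the second.

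\textbf{What each route buys.} Your factorization gives injectivity, surjectivity, and both edge types in one stroke, including what the paper leaves unwritten. As written, though, it depends on the local injectivity of $\pi_{H}$, which you obtain from freeness via \cref{quotient_const}. The paper's chase for $\tilde{f}_{H}$ uses only $f\circ h=f$ and the local bijectivity of $f$. So it shows $\tilde{f}_{H}$ is a cover for any subgroup of deck transformations, with no appeal to strong connectivity.

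The dependence on freeness in your version is not essential. If $\sigma\in H$ satisfies $\sigma\varepsilon_{1}=\varepsilon_{2}$ with $\varepsilon_{1},\varepsilon_{2}\in E^{o}_{Y,w}$, then $f(\varepsilon_{1})=f(\sigma\varepsilon_{1})=f(\varepsilon_{2})$. Local injectivity of $f$ then already forces $\varepsilon_{1}=\varepsilon_{2}$.
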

\begin{proof}
Since $H \le {\rm Aut}(Y/X)$, the action of $H$ on $Y$ is free on $V_{Y}$ by \cref{act_special}.  It follows, as we already pointed out, that $Y \rightarrow Y_{H}$ is a cover.  Moreover, the induced map $\tilde{f}_{H}: Y_{H} \rightarrow X$ is a well-defined morphism of digraphs.  Since the equality $\tilde{f}_{H} \circ \pi_{H} = f$ is clear, it only remains to show that $\tilde{f}_{H}:Y_{H} \rightarrow Y$ is a covering map.  

The morphism $\tilde{f}_{H}:Y_{H} \rightarrow X$ is clearly surjective on vertices, so we just have to show the local bijectivity condition.  We show first that $\tilde{f}_{H}$ induces an injective function $E_{Y,H \cdot w}^{o} \rightarrow E_{X,f(w)}^{o}$.  If $H \cdot \varepsilon_{1}, H \cdot \varepsilon_{2} \in E_{Y_{H},H \cdot w}^{o}$, and $\tilde{f}_{H}(H \cdot \varepsilon_{1}) = \tilde{f}_{H}(H \cdot \varepsilon_{2})$, then $f(\varepsilon_{1}) = f(\varepsilon_{2})$.  Since $H \cdot o(\varepsilon_{1}) = H \cdot o(\varepsilon_{2}) = H \cdot w$, we have that $o(h_{1} \cdot \varepsilon_{1}) = o(h_{2} \cdot \varepsilon_{2}) = w$ for some $h_{1},h_{2} \in H$.  It follows that $h \cdot \varepsilon_{1}, h \cdot \varepsilon_{2} \in E_{Y,w}^{o}$ and 
$$f(h_{1} \cdot \varepsilon_{1}) = f(\varepsilon_{1}) = f(\varepsilon_{2}) = f(h_{2} \cdot \varepsilon_{2});$$
thus $h \cdot \varepsilon_{1} = h \cdot \varepsilon_{2}$, since $f$ itself is a cover.  We deduce that $H \cdot \varepsilon_{1} = H \cdot \varepsilon_{2}$, and that $\tilde{f}_{H}$ induces an injective function $E_{Y,H \cdot w}^{o} \rightarrow E_{X,f(w)}^{o}$.  The rest of the argument is similar and left to the reader.
\end{proof}

Starting with an intermediate cover, one obtains a subgoup as follows.
\begin{theorem}
Let $f:Y \rightarrow X$ be a cover, and let $(Z,q,s)$ be an intermediate cover.  Then, ${\rm Aut}_{q}(Y/Z) \le {\rm Aut}_{f}(Y/X)$.  Moreover, if $(Z_{1},q_{1},s_{1})$ is an intermediate cover equivalent to another one, say $(Z_{2},q_{2},s_{2})$ via an isomorphism of digraphs $\varphi:Z_{1} \rightarrow Z_{2}$, then ${\rm Aut}_{q_{1}}(Y/Z_{1}) = {\rm Aut}_{q_{2}}(Y/Z_{2})$.
\end{theorem}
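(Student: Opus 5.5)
The first assertion, ${\rm Aut}_{q}(Y/Z) \le {\rm Aut}_{f}(Y/X)$, follows straight from the definitions. If $\sigma \in {\rm Aut}_{q}(Y/Z)$, then $\sigma \in {\rm Aut}(Y)$ and $q \circ \sigma = q$. Composing with $s$ gives
$$f \circ \sigma = s \circ q \circ \sigma = s \circ q = f,$$
so $\sigma \in {\rm Aut}_{f}(Y/X)$. Both groups are subgroups of ${\rm Aut}(Y)$ under composition, so the inclusion of sets is already an inclusion of subgroups. No hypothesis such as strong connectedness is needed for this step.

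For the second assertion, suppose $\varphi: Z_{1} \rightarrow Z_{2}$ is an isomorphism of digraphs with $\varphi \circ q_{1} = q_{2}$. We prove the two inclusions separately.

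If $\sigma \in {\rm Aut}_{q_{1}}(Y/Z_{1})$, then
$$q_{2} \circ \sigma = \varphi \circ q_{1} \circ \sigma = \varphi \circ q_{1} = q_{2},$$
so $\sigma \in {\rm Aut}_{q_{2}}(Y/Z_{2})$.

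For the reverse inclusion, note that $\varphi$ is bijective on vertices and on edges, so its inverse $\varphi^{-1}$ is again a morphism of digraphs. Moreover $q_{1} = \varphi^{-1} \circ q_{2}$. The same computation with the roles of the two covers exchanged then gives ${\rm Aut}_{q_{2}}(Y/Z_{2}) \subseteq {\rm Aut}_{q_{1}}(Y/Z_{1})$.

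The only point needing any care is that $\varphi^{-1}$ respects incidence. Given an edge $e$ of $Z_{2}$, write $e = \varphi(e')$. Then $o(\varphi^{-1}(e)) = o(e')$ and $\varphi(o(e')) = o(e)$, so $o(\varphi^{-1}(e)) = \varphi^{-1}(o(e))$. The same argument applies to the terminal vertex $t$. Beyond this routine check, I expect no real obstacle.
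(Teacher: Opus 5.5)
Your proof is correct and is the paper's argument: compose with $s$ for the first inclusion and with $\varphi$ for the second, and you spell out the reverse inclusion that the paper leaves to the reader. For that reverse inclusion you don't actually need $\varphi^{-1}$ to be a digraph morphism, since injectivity of $\varphi$ on vertices and edges already turns $\varphi \circ q_{1} \circ \sigma = \varphi \circ q_{1}$ into $q_{1} \circ \sigma = q_{1}$.
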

\begin{proof}
Let $\sigma \in {\rm Aut}_{q}(Y/Z)$, then we have $q \circ \sigma = q$ by definition.  Thus, one has
$$f \circ \sigma = s \circ q \circ \sigma = s \circ q = f.$$
It follows that $\sigma \in {\rm Aut}_{f}(Y/X)$ showing the inclusion ${\rm Aut}_{q}(Y/Z) \subseteq {\rm Aut}_{f}(Y/X)$.

For the second part or the theorem, if $\sigma \in {\rm Aut}_{q_{1}}(Y/Z_{1})$, then $q_{1} \circ \sigma = q_{1}$.  But then
$$q_{2} \circ \sigma = \varphi \circ q_{1} \circ \sigma = \varphi \circ q_{1} = q_{2},$$
which shows the inclusion ${\rm Aut}_{q_{1}}(Y/Z_{1}) \subseteq {\rm Aut}_{q_{2}}(Y/Z_{2})$.  The other inclusion is similar and left to the reader.
\end{proof}

The following result is important for the Galois correspondence and also to obtain other Galois covers from intermediate covers.

\begin{theorem} \label{galois_top}
Let $f:Y \rightarrow X$ be a Galois cover of digraphs and let $(Z,q,s)$ be an intermediate cover.  Then, the cover $q:Y \rightarrow Z$ is also a Galois cover.
\end{theorem}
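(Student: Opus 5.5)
We need to show that $q:Y \rightarrow Z$ is Galois. By definition this requires three things: $Y$ and $Z$ are strongly connected, and ${\rm Aut}_{q}(Y/Z)$ acts transitively on each fiber $q^{-1}(z)$.

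Strong connectivity of $Y$ is part of the hypothesis that $f$ is Galois. For $Z$, take two vertices $z_{1}, z_{2}$. Since $q$ is surjective on vertices (it is a cover), pick lifts $w_{1}, w_{2}$. Choose a path in $Y$ from $w_{1}$ to $w_{2}$; its image under $q$ is a path in $Z$ from $z_{1}$ to $z_{2}$, because $q$ is a morphism of digraphs.

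The main step is transitivity. Let $w_{1}, w_{2} \in q^{-1}(z)$. Then $f(w_{1}) = s(q(w_{1})) = s(q(w_{2})) = f(w_{2})$. Since $f$ is Galois, there is $\sigma \in {\rm Aut}_{f}(Y/X)$ with $\sigma \cdot w_{1} = w_{2}$. It remains to show that $\sigma$ lies in ${\rm Aut}_{q}(Y/Z)$, that is, $q \circ \sigma = q$.

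For this I apply \cref{unique_lifting_prop} with source $Y$ (strongly connected), target $Z$, and the cover $s:Z \rightarrow X$, comparing the two morphisms $q\circ\sigma$ and $q$ from $Y$ to $Z$. The hypotheses hold:
\begin{itemize}
\item $s \circ (q \circ \sigma) = f \circ \sigma = f = s \circ q$, so both morphisms have the same composite with $s$;
\item $(q\circ\sigma)(w_{1}) = q(w_{2}) = z = q(w_{1})$, so they agree at a vertex.
\end{itemize}
The unique lifting property then gives $q \circ \sigma = q$. Hence $\sigma \in {\rm Aut}_{q}(Y/Z)$ and $\sigma$ maps $w_{1}$ to $w_{2}$, which is the required transitivity.

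The only delicate point is setting up the unique lifting property with the correct roles: the covering map must be $s$, not $q$ or $f$, and the strongly connected source must be $Y$. Once this is arranged, no further obstacle remains; the argument uses only the composite identity $s \circ q = f$ and \cref{unique_lifting_prop}.
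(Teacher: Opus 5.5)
Your proposal is correct and follows essentially the same route as the paper: you obtain $\sigma$ from the Galois property of $f$ and apply \cref{unique_lifting_prop} to $q\circ\sigma$ and $q$ relative to the cover $s$. You also verify strong connectivity of $Z$, which the paper leaves implicit. Your proof states the target group correctly as ${\rm Aut}_{q}(Y/Z)$, whereas the paper's proof slips and writes ${\rm Aut}(Y/X)$.
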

\begin{proof}
We have to show that ${\rm Aut}(Y/X)$ acts transitively on the fibers of $q$.  Let $w_{1},w_{2} \in V_{Y}$ be such that $q(w_{1}) = q(w_{2})$.  Then,
$$f(w_{1}) = s \circ q(w_{1}) = s \circ q (w_{2}) = f(w_{2}).$$
Since $f:Y \rightarrow X$ is Galois, there exists $\sigma \in {\rm Gal}(Y/X)$ such that $\sigma \cdot w_{1} = w_{2}$.  Applying \cref{unique_lifting_prop} to the diagram
\begin{equation*}
\begin{tikzcd}
Y \arrow[r, "q \circ \sigma", yshift=0.5ex] \arrow[r,"q" ',shift right=0.5ex]  & \arrow[d,"s"]Z \\
& X
\end{tikzcd}
\end{equation*}
gives that $\sigma \in {\rm Aut}(Y/Z)$, and this ends the proof.
\end{proof}

At last, we can formulate and prove a Galois correspondence for Galois covers of digraphs.
\begin{theorem}[Galois correspondence] \label{gal_cor}
Let $f:Y \rightarrow X$ be a Galois cover of digraphs.  Then, there is a one-to-one correspondence between subgroups of ${\rm Gal}(Y/X)$ and equivalence classes of intermediate covers of $Y/X$.  This correspondence is obtained as follows.  If $H \le {\rm Gal}(Y/X)$, then the corresponding equivalence class of intermediate covers is the equivalence class of $(Y_{H},\pi_{H},\tilde{f}_{H})$.  Conversely, if $(Z,q,s)$ is a representative for an equivalence class of intermediate covers, then the corresponding subgroup is ${\rm Gal}(Y/Z)$.
\end{theorem}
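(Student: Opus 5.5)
We construct the two maps of the correspondence and check that they are mutually inverse. In one direction, send $H \le {\rm Gal}(Y/X)$ to the class of $(Y_{H},\pi_{H},\tilde{f}_{H})$; this is an intermediate cover by the theorem preceding \cref{galois_top}. In the other direction, send an intermediate cover $(Z,q,s)$ to ${\rm Aut}_{q}(Y/Z)$. By the theorem just before \cref{galois_top}, this is a subgroup of ${\rm Gal}(Y/X)$ and depends only on the equivalence class, so both maps are well defined.

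\emph{First composite.} Starting from $H$, we must show ${\rm Aut}_{\pi_{H}}(Y/Y_{H}) = H$. The subgroup $H$ acts freely on $V_{Y}$ by \cref{act_special}, and $Y$ is strongly connected. Hence \cref{one_way_corr}, applied to $H \le {\rm Aut}(Y)$, gives ${\rm Aut}(Y/Y_{H}) = H$ directly.

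\emph{Second composite.} Starting from $(Z,q,s)$, put $H = {\rm Aut}_{q}(Y/Z)$. We must exhibit an isomorphism $\varphi: Y_{H} \to Z$ with $\varphi \circ \pi_{H} = q$. Since $q \circ h = q$ for all $h \in H$, the map $q$ factors through the quotient, giving a digraph morphism $\varphi: Y_{H} \to Z$, $H\cdot w \mapsto q(w)$ and $H \cdot \varepsilon \mapsto q(\varepsilon)$. It is surjective on vertices because $q$ is. It is surjective on edges because every edge of $Z$ starts at some $q(w)$, and local bijectivity of $q$ at $w$ lifts it.

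Injectivity is the main step. By \cref{galois_top}, $q: Y \to Z$ is a Galois cover, so $H = {\rm Gal}(Y/Z)$ acts transitively on the fibers of $q$. On vertices: if $q(w_{1}) = q(w_{2})$, transitivity gives $w_{2} \in H \cdot w_{1}$. On edges: suppose $q(\varepsilon_{1}) = q(\varepsilon_{2})$. Then $q(o(\varepsilon_{1})) = q(o(\varepsilon_{2}))$, so some $h \in H$ satisfies $h \cdot o(\varepsilon_{1}) = o(\varepsilon_{2})$. The edges $h\cdot\varepsilon_{1}$ and $\varepsilon_{2}$ then share an origin and have the same image under $q$, so local bijectivity of $q$ forces $h \cdot \varepsilon_{1} = \varepsilon_{2}$. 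This is the same argument as in \cref{galois_reformulation}. In fact, one may simply invoke \cref{galois_reformulation} for the Galois cover $q: Y \to Z$ to get $Y_{H} \simeq Z$ via $\tilde{q}_{H} = \varphi$; the hypothesis that $Z$ is strongly connected holds because it is the image of the strongly connected $Y$ under the surjective morphism $q$.

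It follows that $\varphi$ is an isomorphism with $\varphi \circ \pi_{H} = q$, so $(Y_{H},\pi_{H},\tilde{f}_{H})$ is equivalent to $(Z,q,s)$, and the two maps are mutually inverse. The only delicate points are the use of \cref{galois_top} to obtain transitivity on the fibers of $q$, and checking that $Z$ is strongly connected so that \cref{galois_reformulation} applies.
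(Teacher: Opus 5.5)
Your proposal is correct and follows essentially the same route as the paper: the first composite is handled by \cref{one_way_corr}, and for the second you use \cref{galois_top} together with \cref{galois_reformulation} applied to $q: Y \to Z$ to conclude that the induced map $\tilde{q}_{H}: Y_{H} \to Z$ is an isomorphism with $\tilde{q}_{H} \circ \pi_{H} = q$. Your extra check that $Z$ is strongly connected (as the image of $Y$ under the vertex-surjective $q$) usefully supplies a hypothesis that the paper uses only implicitly when invoking \cref{galois_top,galois_reformulation}.
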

\begin{proof}
The first half of the correspondence has nothing to do with Galois covers.  If $Y$ is a strongly connected digraph, and $f:Y \rightarrow X$ is a cover, not necessarily Galois, then starting with a subgroup $H \le {\rm Aut}(Y/X)$, if follows from \cref{one_way_corr} that ${\rm Aut}(Y/Z) = H$, where we set $Z = Y_{H}$ for ease of notation.  This shows that the correspondence that associates a subgroup $H$ to the intermediate cover $(Y_{H},\pi_{H},\tilde{f}_{H})$ is injective, and that the one that associates to an equivalence class of intermediate covers a corresponding subgroup is surjective.

The second half of the correspondence is valid only under the Galois assumption and follows from \cref{galois_top} as follows.  Let $(Z,q,s)$ be an intermediate cover and set $H = {\rm Aut}(Y/Z)$.  We have to show that $(Z,q,s)$ is equivalent to $(Y_{H},\pi_{H},\tilde{f}_{H})$.  Consider the natural morphism of digraphs $\tilde{q}_{H}:Y_{H} \rightarrow Z$.  Since $Y/Z$ is Galois by \cref{galois_top}, it follows from \cref{galois_reformulation} that the morphism $\tilde{q}_{H}$ is an isomorphism of digraphs.  Moreover, a simple calculation shows that
$$\tilde{q}_{H} \circ \pi_{H} = q \text{ and } s \circ \tilde{q}_{H} = \tilde{f}_{H}.$$
This shows the desired claim, and ends the proof of the Galois correspondence.
\end{proof}

We end this section with the following useful result.

\begin{theorem} \label{last_gal_result}
Let $f:Y \rightarrow X$ be a Galois cover of digraphs, and let $(Z,q,s)$ be an intermediate cover whose equivalence class corresponds to the subgroup $H = {\rm Gal}(Y/Z)$.  Then $s:Z \rightarrow X$ is a Galois cover if and only if $H$ is a normal subgroup of ${\rm Gal}(Y/X)$ in which case we have a natural group isomorphism
$${\rm Gal}(Y/X)/{\rm Gal}(Y/Z) \stackrel{\simeq}{\longrightarrow}{\rm Gal}(Z/X).$$
\end{theorem}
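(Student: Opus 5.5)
The plan is to prove both directions of the equivalence and then construct the isomorphism, using only the unique lifting property (\cref{unique_lifting_prop}), \cref{galois_top} (so that $Y/Z$ is Galois), and the identification $Z \simeq Y_{H}$ from the Galois correspondence (\cref{gal_cor}). Throughout, write $G = {\rm Gal}(Y/X)$. By \cref{gal_cor} we may replace $(Z,q,s)$ by the equivalent intermediate cover $(Y_{H},\pi_{H},\tilde{f}_{H})$; this does not affect whether $s$ is Galois or what ${\rm Aut}(Z/X)$ is.

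\emph{Normal $\Rightarrow$ Galois.} Suppose $H \trianglelefteq G$. For $\sigma \in G$, define $\bar\sigma: Y_{H} \to Y_{H}$ by $H\cdot w \mapsto H\cdot \sigma w$ and $H\cdot\varepsilon \mapsto H\cdot\sigma\varepsilon$.
\begin{itemize}
\item This is well defined because $\sigma H = H\sigma$.
\item It is a digraph automorphism with inverse $\overline{\sigma^{-1}}$.
\item It satisfies $\tilde f_{H}\circ\bar\sigma = \tilde f_{H}$, since $f\circ\sigma = f$.
\end{itemize}
This gives a group morphism $G \to {\rm Aut}(Z/X)$ whose kernel contains $H$. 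The action on fibres is transitive: if $\tilde f_{H}(H w_{1}) = \tilde f_{H}(H w_{2})$, then $f(w_{1}) = f(w_{2})$. Since $Y/X$ is Galois, there is $\sigma$ with $\sigma w_{1} = w_{2}$, and then $\bar\sigma(Hw_{1}) = Hw_{2}$. The digraph $Z$ is strongly connected, being the image of the strongly connected $Y$ under a surjective morphism, so $Z/X$ is Galois.

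\emph{The isomorphism.} For injectivity of $G/H \to {\rm Gal}(Z/X)$: if $\bar\sigma = {\rm id}$, then $\pi_{H}\circ\sigma = \pi_{H}$, so $\sigma \in {\rm Aut}(Y/Z) = H$ by \cref{one_way_corr}. For surjectivity: given $\tau \in {\rm Gal}(Z/X)$, pick a vertex $w$ and a vertex $w'$ of $Y$ with $\pi_{H}(w') = \tau(\pi_{H}(w))$. Then $f(w') = f(w)$, so some $\sigma \in G$ has $\sigma w = w'$. Now $\bar\sigma$ and $\tau$ are two morphisms $Z \to Z$ with $\tilde f_{H}\circ \bar\sigma = \tilde f_{H}\circ\tau$ that agree at $\pi_{H}(w)$. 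Since $\tilde f_{H}$ is a cover and $Z$ is strongly connected, \cref{unique_lifting_prop} gives $\bar\sigma = \tau$. Naturality is clear from the construction.

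\emph{Galois $\Rightarrow$ normal.} This is the step needing the main idea: we must lift automorphisms of $Z$ back to $Y$. Fix $\sigma\in G$; we want $\sigma H\sigma^{-1} \subseteq H$.
\begin{enumerate}
\item Choose a vertex $w$. Since $s(q(\sigma w)) = f(w) = s(q(w))$ and $Z/X$ is Galois, there is $\tau \in {\rm Gal}(Z/X)$ with $\tau(q(w)) = q(\sigma w)$.
\item The morphisms $q\circ\sigma$ and $\tau\circ q$ from $Y$ to $Z$ both compose with $s$ to give $f$, and they agree at $w$. Since $Y$ is strongly connected, \cref{unique_lifting_prop} gives $q\circ\sigma = \tau\circ q$.
\item For $h \in H$, compute $q\circ\sigma h\sigma^{-1} = \tau\circ q\circ h\circ\sigma^{-1} = \tau\circ q\circ\sigma^{-1}$.
\item Applying the relation from step 2 with $\sigma^{-1}$ in place of $\sigma$ at the vertex $\sigma w$ shows $\tau\circ q\circ\sigma^{-1} = q$.
\end{enumerate}
Hence $\sigma h\sigma^{-1} \in {\rm Aut}_q(Y/Z) = H$.

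The main obstacle is step 4, which requires checking carefully that both $q\circ\sigma^{-1}$ and $\tau^{-1}\circ q$ lie over $s$ and agree at one point. That check is again a direct application of \cref{unique_lifting_prop}.
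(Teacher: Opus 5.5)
Your proof is correct and is essentially the paper's argument: both rest on the lift $q\circ\sigma=\tau\circ q$ (from transitivity on the fibres of $s$ plus \cref{unique_lifting_prop}) and, when $H$ is normal, on the induced automorphisms $\sigma_{H}$ of $Y_{H}\simeq Z$. The only reorganization is that the paper gets normality and the isomorphism together from the surjective restriction map ${\rm res}\colon {\rm Gal}(Y/X)\to{\rm Gal}(Z/X)$ with kernel ${\rm Gal}(Y/Z)$, whereas you check $\sigma H\sigma^{-1}\subseteq H$ directly and build the same isomorphism from $\sigma\mapsto\bar\sigma$ on the normal side; also, your step 4 is not an obstacle, since $\tau\circ q\circ\sigma^{-1}=q$ follows from $q\circ\sigma=\tau\circ q$ by composing on the right with $\sigma^{-1}$, with no second appeal to unique lifting.
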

\begin{proof}
Assume that the cover $s:Z \rightarrow X$ is Galois, and let us show first that there is a natural group morphism
$${\rm res}:{\rm Gal}(Y/X) \rightarrow {\rm Gal}(Z/X).$$
We claim that given $\sigma \in {\rm Gal}(Y/X)$, there exists a unique $\gamma \in {\rm Gal}(Z/X)$ such that the diagram
\begin{equation} \label{com}
\begin{tikzcd}
Y \arrow[r, "\sigma"] \arrow[d,"q" ']  & \arrow[d,"q"] Y \\
Z \arrow[r,"\gamma"] &  Z
\end{tikzcd}
\end{equation}
commutes.  The uniqueness part follows immediately from the fact that $q:Y \rightarrow Z$ is a covering maps, and hence surjective on both vertices and edges.  To show the existence, let $w \in V_{Y}$.  Then, $q(w), q(\sigma \cdot w) \in V_{Z}$ and note that
$$s(q(w)) = f(w) = f(\sigma \cdot w) = s(q(\sigma \cdot w)).$$
Since the cover $s:Z \rightarrow X$ is assumed to be Galois, there exists $\gamma \in {\rm Gal}(Z/X)$ such that
$$\gamma \circ q(w) = q \circ \sigma(w).$$
Applying \cref{unique_lifting_prop} to the diagram
\begin{equation} \label{useful_twice}
\begin{tikzcd}
Y \arrow[r, "\gamma \circ q", yshift=0.5ex] \arrow[r,"q \circ \sigma" ',shift right=0.5ex]  & \arrow[d,"s"]Z \\
& X
\end{tikzcd}
\end{equation}
gives the commutativity of the diagram (\ref{com}).  We can now define ${\rm res}:{\rm Gal}(Y/X) \rightarrow {\rm Gal}(Z/X)$ via $\sigma \mapsto {\rm res}(\sigma) = \gamma$, where $\gamma$ is the unique element in ${\rm Gal}(Z/X)$ such that $q \circ \sigma = \gamma \circ q$ whose existence we just proved.  The fact that the map ${\rm res}$ is a group morphism is left to the reader.  We claim that ${\rm res}$ is surjective.  If $\gamma \in {\rm Gal}(Z/X)$, then let $w_{1} \in V_{Y}$ and consider $\gamma \circ q (w_{1}) \in V_{Z}$.  Since $q$ is surjective on vertices, there exists $w_{2} \in V_{Y}$ such that $q(w_{2}) = \gamma \circ q(w_{1})$.  It follows that 
$$f(w_{1}) = s \circ q(w_{1}) = s(\gamma \circ q(w_{1})) = s(q(w_{2})) = f(w_{2}),$$
and since $f:Y\rightarrow X$ is assumed to be Galois, there exists $\sigma \in {\rm Gal}(Y/X)$ such that $\sigma \cdot w_{1} = w_{2}$.  Applying again \cref{unique_lifting_prop} to the diagram (\ref{useful_twice}) gives $q \circ \sigma = \gamma \circ q$, and this shows the surjectivity of the group morphism ${\rm res}$.  The fact that ${\rm ker}({\rm res}) = {\rm Gal}(Y/Z)$ is simple and left to the reader.  We then obtain that ${\rm Gal}(Y/Z)$ is a normal subgroup of ${\rm Gal}(Y/X)$ and this ends one direction of the proof.

Conversely, let us assume that $H = {\rm Gal}(Y/Z) \unlhd {\rm Gal}(Y/X)$.  The normality assumption implies that given any $\sigma \in {\rm Gal}(Y/X)$, the map $\sigma_{H}:Y_{H} \rightarrow Y_{H}$ given by
$$\sigma_{H}(H \cdot w) = H \cdot \sigma(w) \text{ and } \sigma_{H}(H \cdot e) = H \cdot \sigma(e)$$
is a well-defined isomorphism of digraphs.  Moreover, since $Y/Z$ is Galois by \cref{galois_top}, we have by \cref{galois_reformulation} that the induced map $\tilde{q}_{H}:Y_{H} \rightarrow Z$ is an isomorphism of digraphs.  Consider now $\tilde{q}_{H} \circ \sigma_{H} \circ \tilde{q}_{H}^{-1} \in {\rm Aut}(Z)$.  The situation is summarized in the following commutative diagram
\begin{equation*}
\begin{tikzcd}
& \arrow[dl,"q"'] \arrow[d,"\pi_{H}"]Y \arrow[r,"\sigma"] &Y \arrow[d,"\pi_{H}" '] \arrow[dr,"q"]& \\
Z & \arrow[l,"\tilde{q}_{H}"]  Y_{H} \arrow[r,"\sigma_{H}"'] &  Y_{H} \arrow[r,"\tilde{q}_{H}"'] & Z
\end{tikzcd}
\end{equation*}
where all horizontal arrows are isomorphisms of digraphs.  We leave it to the reader to check that $\tilde{q}_{H} \circ \sigma_{H} \circ \tilde{q}_{H}^{-1} \in {\rm Aut}(Z/X)$.  We can now show that $Z/X$ is Galois.  Let $v_{1},v_{2} \in V_{Z}$ be such that $s(v_{1}) = s(v_{2})$, and let $w_{1},w_{2} \in V_{Y}$ be such that $q(w_{i})=v_{i}$ for $i=1,2$.  Then $f(w_{1}) = f(w_{2})$, and since $Y/X$ is Galois, there exists $\sigma \in {\rm Gal}(Y/X)$ such that $\sigma \cdot w_{1} = w_{2}$.  We now observe that
\begin{equation*}
\begin{aligned}
\tilde{q}_{H} \circ \sigma_{H} \circ \tilde{q}_{H}^{-1}(v_{1}) &= \tilde{q}_{H} \circ \sigma_{H}(H \cdot w_{1}) \\
&= \tilde{q}_{H}(H \cdot \sigma(w_{1})) \\
&= q(\sigma(w_{1})) \\
&= q(w_{2}) \\
&= v_{2}.
\end{aligned}
\end{equation*}
It follows that ${\rm Aut}(Z/X)$ acts transitively on the fibers of $s$, and thus $Z/X$ is a Galois cover, as we wanted to show.
\end{proof}

\begin{remark} \label{undirected_galois}
Recall that a (undirected) graph in the sense of Serre is a digraph $X = (V_{X},\mathbf{E}_{X})$ equipped with an inversion map ${\rm inv}:\mathbf{E}_{X} \rightarrow \mathbf{E}_{X}$ denoted by $e \mapsto {\rm inv}(e) = \bar{e}$ satisfying:
\begin{enumerate}
\item $\bar{e} \neq e$ for all $e \in \mathbf{E}_{X}$,
\item $\bar{\bar{e}} = e$ for all $e \in \mathbf{E}_{X}$,
\item $t(\bar{e}) = o(e)$ and $o(\bar{e}) = t(e)$ for all $e \in \mathbf{E}_{X}$.
\end{enumerate}
Let $G$ be a group acting on a graph $Y$.  In order for the quotient $G \backslash Y = (G \backslash V_{Y},G \backslash \mathbf{E}_{Y})$ to be a graph with the inversion map induced from the one on $Y$, one needs the action to be without inversion, meaning that $\sigma \cdot \varepsilon \neq \bar{\varepsilon}$ for all $\varepsilon \in \mathbf{E}_{Y}$.  All of \cref{gr_acting_on_digraph} works also for graphs as explained in \cite[Sections 5.1 and 5.2]{Sunada:2013}, in other words, one can view the Galois theory for graphs as the one for digraphs with the extra structure of the inversion map taken into account and assuming throughout that groups act without inversion on graphs.  
\end{remark}

\begin{remark}
All of \cref{volt_assignment} can also be modified accordingly in the category of (undirected) graphs.  In order to obtain a graph, the function $\alpha:\mathbf{E}_{X} \rightarrow G$ has to satisfy the extra condition $\alpha(\bar{e}) = \alpha(e)^{-1}$.  The inversion map is then defined to be
$$ (e,\sigma) \mapsto \overline{(e,\sigma)} = (\bar{e},\sigma \alpha(e)),$$
and one can check that the three properties of the inversion map listed in \cref{undirected_galois} are satisfied.
\end{remark}

%\section{An alternative proof of Theorem~\ref{thm:m(Y)}}
\section{Bowen--Franks operator as a circulant matrix.}\label{app:alternative}
In this appendix, we present a different proof of \cref{thm:m(Y)}. Our approach relies on the observation that, in the specific context under consideration, the Bowen--Franks operator can be represented by a circulant matrix.  This approach can be extended to more general situations and may also be of independent interest.

\textbf{Step 1: Realizing \texorpdfstring{$\cBF$}{} as a linear operator given by the multiplication by a polynomial.}
Let $\tau=\sigma_{i}$ be a generator of the finite abelian group $\Delta$. In particular $i$ is a primitive modulo $p$. For an integer $k$ that is coprime to $p$, we write $[k]$ for the unique integer $1\le[k]\le p-1$ such that $[k]\equiv k\mod p$. The matrix of $\cBF$ with respect to the basis $\{1,\tau,\tau^2,\dots,\tau^{p-2}\}$ as a $\Z$-linear map on $\Z[\Delta]$ is given by
\[
M:=-\begin{bmatrix}
    1&[i]&\cdots &[i^{p-2}]\\
    [i^{-1}]&1&\cdots& [i^{p-3}]\\
        [i^{-2}]&[i^{-1}]&\cdots& [i^{p-4}]\\
    \vdots&\vdots&&\vdots\\
    [i^{-p+2}]&[i^{-p+3}]&\cdots&1
\end{bmatrix}=
-\begin{bmatrix}
    1&[i]&\cdots &[i^{p-2}]\\
    [i^{p-2}]&1&\cdots& [i^{p-3}]\\
    [i^{p-3}]&[i^{p-2}]&\cdots&[i^{p-4}]\\
    \vdots&\vdots&&\vdots\\
    [i]&[i^2]&\cdots&1
\end{bmatrix}
\]
%\KM{I think the last line should be $[i], [i^2],...$ because $\tau^{-p+2}=\tau^{-(p-1)-1+2}=\tau$}
In particular, this is a circulant matrix. We shall regard the $\ZZ$-linear map $\cBF$ on $\ZZ[\Delta]$ as the multiplication by $M$ on $\ZZ^{p-1}$.

Define the following $\ZZ$-linear isomorphism
\begin{align*}
\Phi:\ZZ[x]/(x^{p-1}-1)&\to\ZZ^{p-1}\\
x^i&\mapsto e_{i+1},
\end{align*}
where $e_1,e_2,\dots,e_{p-1}$ is the canonical basis of $\ZZ^{p-1}$.
Let $$f(x)=1+[i^{p-2}]x+[i^{p-3}]x^2+\cdots+[i]x^{p-2}$$ %\KM{Shouldn't it be $[i^{-1}]$ and so forth?}
and write $\tilde\cBF$ for the  multiplication by $-f(x)$ map on $\ZZ[x]/(x^{p-1}-1)$.  The matrix of $\Phi\circ\tilde\cBF\circ\Phi^{-1}$ with respect to the canonical basis coincides with $M$. Therefore, $\cBF$ and $\tilde\cBF$ have isomorphic images. Note that
\begin{equation}
\Im(\tilde\cBF)=\left(f(x),x^{p-1}-1\right)/(x^{p-1}-1).    
\label{eq:image-BF-tilde}
\end{equation}

\textbf{Step 2: Description of \texorpdfstring{$\cBF$}{} after extension of scalar by \texorpdfstring{$\CC$}{}.}
Since $M$ is a circulant matrix, it is diagonalizable over $\CC$ and its eigenvalues are given by $-f(\zeta^{j})$, where $\zeta=e^{2\pi i/(p-1)}$, $j=0,1,\dots p-2$. In addition, the vector $v_j:=(1,\zeta^j,\zeta^{2j},\dots,\zeta^{(p-2)j})\in\CC^{p-1}$ belongs to the eigenspace of $-f(\zeta^{j})$. Note that $v_1,\dots,v_{p-1}$ form a basis of $\CC^{p-1}$ since they are orthogonal to each other.

Indeed, if $\psi_j$ denotes the character of $\Delta$ that sends $\tau$ to $\zeta^j$, we recover the eigenvalues of $\cBF$:
\begin{equation}
-f(\zeta^j)=-\sum_{k=0}^{p-2} [i^{-k}]\zeta^{jk}=-\sum_{n=1}^{p-1} n\psi_j^{-1}(n)=\begin{cases}
    -\frac{p(p-1)}{2}&j=0,\\
       -pB_{1,\psi_j^{-1}}\ne0&j\in\{1,3,\dots,p-2\},\\
     0&j\in\{2,4,\dots, p-3\}.
\end{cases}
\label{eq:e-vals}    
\end{equation}
%\KM{If my remark above is right, substitute $\psi_j$ by $\psi_j^{-1}$}
In particular, the rank of $M$ is equal to $(p+1)/2$.

Let $S=\{0,1,3,\dots,p-2\}$ and $S'=\{2,4,\dots,p-3\}$. We have 
\begin{equation}
\Im(\cBF)\otimes\CC=\langle v_j:j\in S\rangle_\CC =\langle v_j: f(\zeta^j)\ne0\rangle_\CC    \label{eq:image}
\end{equation}
since $\{-f(\zeta^j):j\in S\}$ is the set of non-zero eigenvalues of $\cBF$. 
%Hence, we deduce that
%\[
%\Im(\cBF)\subseteq\ZZ^{p-1}\bigcap\langle v_j:j\in S\rangle_\CC .
%\]

\textbf{Step 3: Finding a direct summand that contains the image of \texorpdfstring{$\tilde{\cBF}$}{}.}
Suppose that $g(x)\in\Z[x]$ is a monic polynomial that divides $x^{p-1}-1$. There is a natural short exact sequence of $\Z$-modules
\[
0\to (g(x))/(x^{p-1}-1)\to\Z[x]/(x^{p-1}-1)\to \Z[x]/(g(x))\to0.
\]
As a $\ZZ$-submodule, $T_g:=(g(x))/(x^{p-1}-1)$ is a direct summand of $\Z[x]/(x^{p-1}-1)$ since $\left(\Z[x]/(x^{p-1}-1)\right)/T_g$ is a free $\ZZ$-module. Note that the rank of $T_g$ is $p-1-\deg(g)$.

Let $k(x)=(x^{p-1}-1)/g(x)$. Note that $T_g$ is the kernel of the multiplication by $k(x)$ map on $\Z[x]/(x^{p-1}-1)$. Similar to $\tilde\cBF$, the eigenvalues of this map are given by $k(\zeta^j)$, $j=0,1,\dots,p-2$. Hence, $T_g\otimes\CC$ is spanned by the eigenvectors for the $0$-eigenspace. More explicitly,
\[
\Phi(T_g)\otimes\CC=\langle v_j:k(\zeta^j)=0\rangle_\CC.
\]
Hence, combined with \eqref{eq:image}, we deduce that
\begin{equation}
\Im(\tilde\cBF)\subseteq T_g,        
\label{eq:inclusion}
\end{equation}
if $g(x)$ is set as $\prod_{j\in S'}(x-\zeta^j)$. Here, $T_g$ is a direct summand of rank $(p+1)/2$ inside $\ZZ[x]/(x^{p-1}-1)$. Furthermore, it follows from \eqref{eq:e-vals} that $g(x)$ equals $\gcd(f(x),x^{p-1}-1)$.

Set $h(x)=f(x)/g(x)$, and $k(x)=(x^{p-1}-1)/g(x)$. In particular, $\gcd(h(x),k(x))=1$.
 By \eqref{eq:image-BF-tilde} and \eqref{eq:inclusion},
we have
\[
\frac{T_g}{\Im(\tilde\cBF)}=\frac{(g(x))/(x^{p-1}-1)}{(f(x),x^{p-1}-1)/(x^{p-1}-1)}\cong\frac{(g(x))}{(f(x),x^{p-1}-1)}\cong\frac{\ZZ[x]}{(h(x),k(x))}.
\]
As $\gcd(h(x),k(x))=1$, this quotient is finite. Hence, $T_g$ is the saturation of $\Im(\tilde{\cBF})$ in $\Z[x]/(x^{p-1}-1)$.

The lattice $\mathcal{L}(X)$ in Definition~\ref{def:L(X)} is the saturation of $\Im(\cBF)$ in $\ZZ[\Delta]\cong\ZZ^{p-1}$.
Therefore, $\mathcal{L}(X)$ can be identified with $T_g$ via $\Phi$.

\textbf{Step 4: Calculating the index \texorpdfstring{$m(Y)$}{}.} Our discussion in Step 3, combined with \cref{m_as_card}, tells us that
\[
|m(Y)|=\left(\Im(\tilde\cBF):\tilde\cBF(T_g)\right),
\]
 which is equal to the cardinality of the quotient
\[
\frac{\left(f(x),x^{p-1}-1\right)}{\left(f(x)g(x),x^{p-1}-1\right)}.
\]
As all polynomials appearing in this expression are divisible by $g(x)$, this is isomorphic to
\[
\frac{\left(h(x),k(x)\right)}{\left(f(x),k(x)\right)}.
\]
Applying the second isomorphism gives
\[
\frac{\left(h(x)\right)}{\left(f(x),k(x)\right)\bigcap(h(x))}\cong\frac{\left(h(x)\right)}{\left(f(x),h(x)k(x)\right)},
\]
since $h(x)|f(x)$ and $\gcd(h(x),k(x))=1$. After factoring out $h(x)$, we see that this is isomorphic to
\[
\ZZ[x]/(g(x),k(x)),
\]
the cardinality of which is given by the absolute value of the resultant of $g(x)$ and $k(x)$.

Explicitly, we have
\begin{align*}
g(x)&=\prod_{j\in S'}(x-\zeta^j)=\frac{x^{(p-1)/2}-1}{x-1}=1+x+\cdots x^{\frac{p-3}2},\\
k(x)&=\prod_{j\in S}(x-\zeta^j)=(x-1)(x^{(p-1)/2}+1).
\end{align*}
Therefore, the index is given by
\[
\left|g(1)\prod_{j\in S\setminus\{0\}}g(\zeta^j)\right|.
\]
We can check that
\[
g(1)=\frac{p-1}{2},
\]
and
\[
\left|\prod_{j\in S\setminus\{0\}}g(\zeta^j)\right|=\left|\frac{\prod_{j\in S\setminus\{0\}}(\zeta^{(p-1)/2}-1)}{\prod_{j\in S\setminus\{0\}} (\zeta^j-1)}\right|=\left|\frac{\prod_{j\in S\setminus\{0\}}(-1-1)}{(1)^{(p-1)/2}+1}\right|=\frac{2^{(p-1)/2}}{2}=2^{(p-3)/2}
\]
since $x^{(p-1)/2}+1=\prod_{j\in S\setminus\{0\}}(x-\zeta^j)$. Therefore, we conclude that
\[
|m(Y)|=\frac{p-1}{2}\cdot 2^{(p-3)/2}.
\]

\bibliographystyle{alpha}
\bibliography{references}
\end{document}